\numberwithin{equation}{section}
\theoremstyle{plain}
\newtheorem{thm}{Theorem}
\newtheorem{prop}{Proposition}
\newtheorem{lem}{Lemma}
\theoremstyle{definition}
\newtheorem{rem}{Remark}
\newcommand\md{\textrm{\upshape mod }}
\DeclareMathOperator*{\dsum}{\sum\sum}
\newcommand{\plm}{\pm}
\newcommand{\veps}{\varepsilon}
\newcommand{\ph}{\varphi}
\begin{document}
\title[Goldbach Representations in Arithmetic Progressions]
{Goldbach Representations in Arithmetic Progressions and zeros of Dirichlet $L$-functions}
\author{Gautami Bhowmik}
\address{G. Bhowmik: Laboratoire Paul Painlev{\'e}, Labex-Cempi, Universit{\'e} Lille 1, 59655
Villeneuve d'Ascq Cedex, France}
\email{bhowmik@math.univ-lille1.fr}

\author{Karin Halupczok}
\address{K. Halupczok: Mathematisch-Naturwissenschaftliche Fakult{\"a}t
Heinrich-Heine-Universit{\"a}t D{\"u}sseldorf, Universit{\"a}tsstr. 1,
40225 D{\"u}sseldorf, Germany}
\email{karin.halupczok@uni-duesseldorf.de}

\author{Kohji Matsumoto}
\address{K. Matsumoto: Graduate School of Mathematics, Nagoya University, Furocho,
Chikusa-ku, Nagoya 464-8602, Japan}
\email{kohjimat@math.nagoya-u.ac.jp}

\author{Yuta Suzuki}
\address{Y. Suzuki: Graduate School of Mathematics, Nagoya University, Furocho,
Chikusa-ku, Nagoya 464-8602, Japan}
\email{m14021y@math.nagoya-u.ac.jp}

\keywords{Goldbach problem, congruences, Dirichlet $L$-function, Generalized Riemann hypothesis,
Distinct Zero Conjecture, explicit formula, Siegel zero}
\subjclass[2010]{11P32, 11M26, 11M41}
\thanks{The first and third authors benefit from the financial support of the French-Japanese Joint Project
"Zeta-functions of Several Variables and Applications" (PRC CNRS/JSPS 2015-2016).
The fourth author is supported by Grant-in-Aid for JSPS Research Fellow (Grant Number: JP16J00906)
and had the partial aid of CEMPI for his stay at Lille.}
\date{}
\maketitle

\begin{abstract}
Assuming a conjecture on distinct zeros of Dirichlet $L$-functions
we get asymptotic results on the average number of representations of an integer as the sum of two primes in arithmetic progression.
On the other hand the existence of good error terms gives information
on the location of zeros of $L$-functions.
Similar results are obtained for an integer in a congruence class expressed as the sum of two primes.
\end{abstract}

\section{Introduction and Results}\label{Intro}
The  Goldbach problem of representing every even integer larger than 2 as the sum of two primes has several variants,
one being that in which the summands are primes in given arithmetic progressions.
Similar to the original problem it is known that almost all even integers satisfying some congruence condition
can be written as the sum of two primes in congruence classes.
Quantitatively, the exceptional set of integers less than $X$ and satisfying the necessary congruence condition,
which can not be written as the sum of primes congruent to a common modulus $q$
may be estimated as $O(\varphi (q)^{-1}X^{1-\delta})$ for a computable positive constant $\delta$ and all $q\le X^{\delta}$ \cite{LZ97}.
(See \cite{Bauer16} for more recent results.)

Though the complete solution of these binary Goldbach problems is out of sight,
the related question of the average number of representations of integers as sums of  primes seems more accessible.
The study of the average order of the weighted function
\[
G(n)=\sum_{\ell+m=n}\Lambda(\ell)\Lambda(m)
\]
where $\Lambda$ is the von Mangoldt function has begun with Fujii \cite{Fuj91} and continues to be actively pursued.
However the current state of knowledge on the zeros of the Riemann zeta function $\zeta(s)$ is not enough
to obtain ``good'' error terms unconditionally and the Riemann Hypothesis is always assumed in such studies.
In fact obtaining  sufficiently sharp error terms for average orders of the mean value of $G(n)$
is expected to solve other conjectures like the Riemann Hypothesis,
as elaborated by Granville \cite{Gra07} in the classical case of unrestricted primes.
This paper is an analogous study with the two primes in arithmetic progressions with a common modulus.

The function that we consider here, with $a,b$  positive integers coprime to $q$, is 
\[
G(n;q,a,b)=\sum_{\substack{\ell+m=n\\\ell\equiv a,\,m\equiv b\,(\md q)}}\Lambda(\ell)\Lambda(m)
\]
whose summatory function  defined as
\[
S(x;q,a,b)=\sum_{n\leq x}G(n;q,a,b)
\]
was introduced by R{\"u}ppel \cite{Rup12} and further studied by the fourth author \cite{SuzIJNT}. 

On the lines of Granville we consider the relations between an explicit formula for $S(x;q,a,b)$ and zeros of $L$-functions.
In \cite[1A]{Gra07} it is stated that there is an equivalence between the estimate
\begin{equation}\label{Gra-1A}
\sum_{n\leq x}(G(n)-J(n))\ll x^{3/2+o(1)}
\end{equation}
and the Riemann Hypothesis (RH) for $\zeta(s)$
where $J(n)=0$ for odd $n$ and,
with $C_2=2\prod_{p>2} (1-\frac{1}{(p-1)^{2}})$ being the twin prime constant,
\[
J(n)
=
n\cdot C_2\prod_{\substack{p\mid n\\p>2}}\frac{p-1}{p-2}
\]
for even $n$. The function $J(n)$ is believed since Hardy and Littlewood to be a good approximation for $G(n)$ (cf.\ \cite{HL24}).

We denote
by $\chi$ a Dirichlet character $(\md q)$,
by $L(s,\chi)$ the associated Dirichlet $L$-function and
by $\rho_{\chi}$ its non-trivial zeros.
Let $B_{\chi}=\sup\{\Re \rho_{\chi}\}$ and $B_q=\sup\{B_{\chi}\mid\chi\,(\md q)\}$.
Hence $1/2\leq B_q\leq 1$ for $q\ge 1$.
In case of the trivial character, we use $\rho$ for non-trivial zeros of $\zeta(s)$, 
and $B=\sup\{\Re\rho\}$.

In the context of primes in congruence classes we first need to formulate the Distinct Zero Conjecture (DZC) on zeros of $L$-functions as:
\begin{quote}
\textit{For any $q\geq 1$, any two distinct Dirichlet $L$-functions associated with characters of modulus $q$
do not have a common non-trivial zero, except for a possible multiple zero at $s=1/2$.}
\end{quote}
Though weaker than the non-coincidence conjecture found in literature that expects all zeros of all primitive $L$-functions to be linearly independent except for the possible multiple zero at $s=1/2$
(cf. \cite[p.353]{Con03}), this suffices for our purpose.

\begin{thm}\label{MainTheorem-1}
Let $a,b$ be integers with $(ab,q)=1$.
\begin{enumerate}
\renewcommand{\labelenumi}{{\upshape (\arabic{enumi})}}
\item
For $x\ge2$, we have
\begin{equation}
\label{MainTheorem-1-asymp}
S(x;q,a,b)=\frac{x^2}{2\varphi(q)^2}+O(x^{1+B_q}),
\end{equation}
where the implicit constant is absolute.
\item
Let {\upshape DZC} be true, let $\chi(a)+\chi(b)\neq 0$ for all characters
$\chi\ (\md q)$ and let
$1/2\le d<1$.
If the asymptotic formula
\begin{align}\label{MainTheorem-1-formula}
S(x;q,a,b)=\frac{x^2}{2\varphi(q)^2}+O_q(x^{1+d+\varepsilon}).
\end{align}
holds for any $\varepsilon>0$, then $B_q\le d$ or $B_q=1$.
Further if \eqref{MainTheorem-1-formula} holds with $a=b$, then $B_{q}\le d$. 
\end{enumerate}
\end{thm}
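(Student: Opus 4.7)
The plan is to derive both parts from an explicit formula for $S(x;q,a,b)$ obtained via Dirichlet character decomposition. Applying character orthogonality to detect the congruences, I would write
\[
S(x;q,a,b) = \frac{1}{\varphi(q)^2}\sum_{\chi_1,\chi_2\,(\md q)} \bar\chi_1(a)\bar\chi_2(b)\,\Psi(x;\chi_1,\chi_2),
\]
where $\Psi(x;\chi_1,\chi_2):=\sum_{\ell+m\le x}(\chi_1\Lambda)(\ell)(\chi_2\Lambda)(m)$. Substituting the truncated explicit formula for each $\psi(y,\chi_j)$ and performing a Stieltjes integration produces
\[
S(x;q,a,b)=\frac{x^2}{2\varphi(q)^2}-\frac{1}{\varphi(q)^2}\sum_{\chi\,(\md q)}(\bar\chi(a)+\bar\chi(b))\sum_{\rho_\chi}\frac{x^{\rho_\chi+1}}{\rho_\chi(\rho_\chi+1)}+\mathcal{D}(x)+O_q(x\log^2 x),
\]
where $\mathcal{D}(x)$ is a double-zero sum with a Beta-function-type kernel. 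For part (1), both the single-zero sum and $\mathcal{D}(x)$ are bounded by $O(x^{1+B_q})$ using the absolute convergence of $\sum_\rho |\rho(\rho+1)|^{-1}$ and standard estimates on the Beta kernel, giving \eqref{MainTheorem-1-asymp}.

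For part (2), suppose DZC, $\chi(a)+\chi(b)\neq 0$ for all $\chi\,(\md q)$, and $U(x):=S(x;q,a,b)-x^2/(2\varphi(q)^2)=O_q(x^{1+d+\varepsilon})$. Arguing by contradiction, assume $B_q>d$ and $B_q<1$. Since $L$-function zeros are discrete in compact regions and the zero-free region forces any sequence of zeros with real parts approaching a limit $<1$ to have a subsequence with bounded ordinates, the supremum $B_q$ is attained at some zero $\rho^*=\beta^*+i\gamma^*$ of some $L(s,\chi^*)$ with $\beta^*=B_q>d\ge 1/2$. The exceptional point $s=1/2$ in DZC does not apply, so $\rho^*$ is a zero of no other $L(s,\chi)$ (apart from the conjugation-forced $\bar\rho^*$ of $L(s,\bar\chi^*)$). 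The coefficient of $x^{\rho^*+1}$ in the explicit formula is therefore $-(\bar\chi^*(a)+\bar\chi^*(b))/(\varphi(q)^2\rho^*(\rho^*+1))$, nonzero by hypothesis. A Landau-style Omega argument applied to the almost-periodic zero-sum then yields $U(x)=\Omega(x^{1+B_q})$, contradicting the assumed bound. Hence $B_q\le d$ or $B_q=1$.

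For the strengthening when $a=b$, the positivity $G(n;q,a,a)\ge 0$ makes $S(x;q,a,a)$ non-decreasing. Applying Landau's theorem on Dirichlet series with non-negative coefficients to $\sum_n G(n;q,a,a)n^{-s}$, combined with the classical non-vanishing $L(1,\chi)\neq 0$, rules out the scenario of zeros accumulating near $\Re s=1$ that underlies the $B_q=1$ escape, giving $B_q\le d$ in this case.

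The main anticipated obstacle is the rigorous Omega argument: isolating the contribution of the single zero $\rho^*$ from the infinite almost-periodic sum and the double-zero term $\mathcal{D}(x)$ requires Diophantine-type control of linear combinations of zero ordinates, where DZC and the non-vanishing of $\chi(a)+\chi(b)$ are precisely what prevent destructive interference between distinct zero contributions.
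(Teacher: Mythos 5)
Your plan diverges from the paper at several essential points, and at least two of the steps you sketch would not go through as stated.

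\textbf{Part (1).} You claim the double-zero sum $\mathcal{D}(x)$ is $O(x^{1+B_q})$ by ``standard estimates on the Beta kernel,'' but a direct estimate of that kernel gives at best $\Sigma_3 \ll x^{2B_q}T^{1/2}(\log qx)^4$, which, after optimizing $T$, lands at $O(x^{(2+4B_q)/3}(\log qx)^4)$ --- this is what the paper's Proposition~\ref{prop1} records. That exponent is $\le 1+B_q$ only up to log factors, and the logs cannot be absorbed uniformly in $q$ when $B_q$ is near $1$. The paper therefore replaces Proposition~\ref{prop1} by the strictly stronger Theorem~\ref{Asymptotic_B_SP}, whose error is $O(x^{2B_q^*}(\log qx)^5)$ with $B_q^* = \min(B_q,1-\eta)$ coming from the Vinogradov--Korobov zero-free region and Siegel's theorem; this is proved by a circle-method/Gallagher argument (Lemmas~\ref{Selberg}--\ref{J_estimate}), not by estimating the Gamma-ratio kernel. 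The deduction of \eqref{MainTheorem-1-asymp} then proceeds by a two-case argument ($1-B_q$ large vs.\ small) that you do not carry out, and without $B_q^*$ and the zero-free region the absolute implied constant in \eqref{MainTheorem-1-asymp} is out of reach.

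\textbf{Part (2).} Here you take a genuinely different route: a Landau-type $\Omega$-argument applied to the explicit-formula zero sum. The paper instead studies the Dirichlet series $F(s)=\sum_n G(n;q,a,b)n^{-s}$, proves a meromorphic continuation to $\sigma>2B_q$ with simple poles at $s=2$ and at $s=\rho_q+1$ whose residues $r(\rho_q)$ are non-zero under DZC and the condition $\chi(a)+\chi(b)\neq 0$ (Proposition~\ref{Ruppel_continuation}), and then compares this pole-location statement with the analytic continuation to $\sigma>1+d$ that the hypothesis \eqref{MainTheorem-1-formula} forces. The pole at $\rho_q+1$ is an immediate obstruction, and no Diophantine control of the ordinates is required. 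Your sketch identifies the right non-vanishing conditions but, as you yourself flag, stops short of a rigorous mechanism to isolate one term $x^{\rho^*+1}$ from the almost-periodic sum; making that precise would likely drive you back to the Mellin-transform (i.e.\ Dirichlet-series) argument that the paper uses.

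\textbf{The case $a=b$.} This is where the gap is most serious. You invoke ``Landau's theorem on Dirichlet series with non-negative coefficients combined with $L(1,\chi)\neq 0$,'' but Landau's theorem only places a singularity at the real point of the abscissa of convergence, which here is $s=2$; it does not see a putative pole at $\rho^*+1$ with $\Re\rho^*=1$ (also on the line $\sigma=2$), and $L(1,\chi)\neq0$ is irrelevant to the location of zeros inside the strip. The paper's argument (Section~\ref{sec:exclusion}, following Ruzsa) is of a completely different nature: one works with the power series $F_{a,q}(z)=\sum_{n\equiv a}\Lambda(n)z^n$ on $|z|=e^{-1/N}$, uses \eqref{eq:mainassump} to show $F_{a,q}(z)^2\sim (1-z)^{-2}\varphi(q)^{-2}$ on a major arc, takes the \emph{square root} (this is precisely where $a=b$ is needed, and positivity only enters to fix the sign of the root), and then extracts, via a kernel $K(z)$ and a Parseval/Cauchy--Schwarz minor-arc bound, the power-saving estimate $\psi(N;q,a)-N/\varphi(q)\ll N^{1-\delta}$. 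Feeding this into the explicit formula for $\psi(N;q,a)$, together with DZC, rules out $B_q=1$. Your sketch does not contain the square-root step, the kernel extraction, or the resulting improvement in the prime-counting error term, so it does not establish the claimed strengthening.
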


\begin{rem}
Our result thus falls short of an equivalence
with the Generalized Riemann Hypothesis (GRH) for $L$-functions modulo $q$
since we have an additional possibility of $B_q=1$.
Using a yet unpublished idea of I.~Ruzsa, we were able to exclude this
possibility for the case $a=b$ under the DZC .

Thus  the proof of the equivalence
between the RH and (\ref{Gra-1A}) is now complete (see also \cite{BRu}).
All other equivalences  for primes in arithmetic progressions are still
partial.

In particular if $B_{q}=1$, 
then the error term of Theorem \ref{Asymptotic_B_SP}
becomes so large that it hides the information on non-trivial zeros.
Thus the analytic continuation of the generating function could only be obtained  up to $\sigma>2B_q$
( \ref{Ruppel_continuation}).

\end{rem}

\begin{rem}
We need the condition that $\chi(a)+\chi(b)\neq0$
for all $\chi\ (\md q)$ in order to assure that the residue $r_1(\rho_q)$
in Proposition \ref{Ruppel_continuation} does not vanish.
\end{rem}

To prove Theorem \ref{MainTheorem-1} we need an explicit formula for $S(x;q,a,b)$,
which can be stated as follows.
\begin{thm}\label{Asymptotic_B_SP}
Let $a,b$ be integers with $(ab,q)=1$.
Then,  for $x\ge2$ and for any $\varepsilon>0$ ,
\begin{align}\label{Asymptotic_B_SP-formula}
\lefteqn{S(x;q,a,b)
=
\frac{x^2}{2\varphi(q)^2}}\\
&-\frac{1}{\varphi(q)^2}\sum_{\chi\,(\md q)}(\overline{\chi(a)}+\overline{\chi(b)})
\sum_{\rho_\chi}\frac{x^{\rho_\chi+1}}{\rho_\chi(\rho_\chi+1)}
+O(x^{2B_q^\ast}(\log qx)^5),\nonumber
\end{align}
where the implicit constant is absolute and
\[
B_q^\ast=B_q^\ast(x)=\min(B_q,1-\eta),\quad
\eta=\eta_{q}(x)=\frac{c_1(\varepsilon)}{\min(q^{\varepsilon},(\log x)^{4/5})}
\]
with some small constant $c_1(\varepsilon)>0$ depending only on $\varepsilon>0$.
\end{thm}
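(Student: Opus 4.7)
The plan is to reduce the double sum defining $S(x;q,a,b)$ to a one-variable sum and invoke the explicit formula for $\psi(\cdot;q,a)$. First I would rewrite
\[
S(x;q,a,b)=\sum_{\substack{m\le x\\m\equiv b\,(\md q)}}\Lambda(m)\,\psi(x-m;q,a),
\]
decompose $\psi(y;q,a)=y/\varphi(q)+R(y;q,a)$ with $R$ collecting the zero contributions, and split $S$ into a ``linear'' piece and a ``remainder'' piece.

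For the linear piece, partial summation yields
\[
\frac{1}{\varphi(q)}\sum_{\substack{m\le x\\m\equiv b\,(\md q)}}\Lambda(m)(x-m)=\frac{1}{\varphi(q)}\int_0^x\psi(t;q,b)\,dt,
\]
and the further substitution $\psi(t;q,b)=t/\varphi(q)+R(t;q,b)$ produces the main term $x^2/(2\varphi(q)^2)$ together with $(1/\varphi(q))\int_0^x R(t;q,b)\,dt$. Inserting the truncated explicit formula $R(t;q,b)=-\varphi(q)^{-1}\sum_\chi\overline{\chi(b)}\sum_{\rho_\chi}t^{\rho_\chi}/\rho_\chi+\cdots$ and integrating term by term gives exactly the $\overline{\chi(b)}$ half of the claimed zero sum.

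For the remainder piece, I would expand $R(x-m;q,a)$ via the explicit formula, swap the order of summation, and apply Stieltjes integration by parts to obtain
\[
\sum_{\substack{m\le x\\m\equiv b\,(\md q)}}\Lambda(m)(x-m)^{\rho_\chi}=\rho_\chi\int_0^x\psi(t;q,b)(x-t)^{\rho_\chi-1}\,dt,
\]
the boundary terms vanishing thanks to $\Re\rho_\chi>0$ and $\psi(0;q,b)=0$. Splitting $\psi(t;q,b)$ once more, the $t/\varphi(q)$ summand yields the Beta integral $\int_0^x t(x-t)^{\rho_\chi-1}\,dt=x^{\rho_\chi+1}/(\rho_\chi(\rho_\chi+1))$, which, together with the $1/\rho_\chi$ factor from the expansion of $R(x-m;q,a)$, contributes the $\overline{\chi(a)}$ half of the zero sum. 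The $R(t;q,b)$ summand is the cross term to be controlled.

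The main obstacle is bounding this cross term
\[
\frac{1}{\varphi(q)}\sum_\chi\overline{\chi(a)}\sum_{\rho_\chi}\int_0^x(x-t)^{\rho_\chi-1}R(t;q,b)\,dt
\]
by $O(x^{2B_q^\ast}(\log qx)^5)$. My approach would be to insert the truncated explicit formula for $R(t;q,b)$ at height $T$, chosen as a suitable small power of $x$, so that each pair of zeros $(\rho_\chi,\rho_{\chi'})$ contributes a Beta integral of size $\ll x^{\Re\rho_\chi+\Re\rho_{\chi'}}/(|\rho_\chi||\rho_{\chi'}||\rho_\chi+\rho_{\chi'}|)$. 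Invoking the classical Vinogradov--Korobov style zero-free region for Dirichlet $L$-functions, which is precisely responsible for the $(\log x)^{4/5}$ factor in $\eta$, forces all zeros (apart from possible exceptional ones close to $s=1$) into $\Re\rho\le 1-\eta$, while the rest are bounded by $B_q$; this replaces $\Re\rho$ by $B_q^\ast=\min(B_q,1-\eta)$. Using the zero count $N(T,\chi)\ll T\log qT$ to sum over pairs, and balancing the truncation error of the explicit formula against the target bound by an appropriate choice of $T$, should yield the required polylogarithmic loss $(\log qx)^5$.
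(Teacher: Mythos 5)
Your plan is essentially the paper's Proposition \ref{prop1}: substitute the truncated explicit formula twice, isolate the Beta integrals, and estimate the cross-term $\Sigma_3=\sum_{\chi,\chi'}\sum_{\rho_\chi,\rho_{\chi'}}\mathcal{Z}(\rho_\chi,\rho_{\chi'})x^{\rho_\chi+\rho_{\chi'}}$. But this route provably stalls at the weaker exponent $x^{(2+4B_q)/3}(\log qx)^4$, not the target $x^{2B_q^\ast}(\log qx)^5$. The flaw is in your Beta-integral bound: you claim each pair of zeros contributes $\ll x^{\beta_\chi+\beta_{\chi'}}/(|\rho_\chi||\rho_{\chi'}||\rho_\chi+\rho_{\chi'}|)$, but Stirling gives $|\Gamma(\rho_\chi)\Gamma(\rho_{\chi'})/\Gamma(1+\rho_\chi+\rho_{\chi'})|$ only the bound $T^{1/2}|\rho_\chi|^{-1}|\rho_{\chi'}|^{-1}$, not $|\rho_\chi+\rho_{\chi'}|^{-1}|\rho_\chi|^{-1}|\rho_{\chi'}|^{-1}$. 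When $\gamma_\chi$ and $\gamma_{\chi'}$ have the same sign the exponential factors cancel exactly, and for $|\gamma_\chi|\asymp|\gamma_{\chi'}|\asymp T$ the Beta ratio is genuinely $\asymp T^{-1/2}$, hence $\mathcal{Z}\asymp T^{-3/2}$, not $T^{-3}$. Summing this against the truncation error $x^2T^{-1}(\log qx)^2$ and optimizing in $T$ yields $T=x^{4(1-B_q)/3}$ and the exponent $(2+4B_q)/3>2B_q^\ast$. So no choice of $T$ in your framework reaches the claimed bound; the error you need to dominate is intrinsically $\gg x^{2B_q}T^{1/2}$.

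The paper's actual proof of Theorem \ref{Asymptotic_B_SP} abandons the double explicit formula and instead uses a circle-method decomposition (Sections \ref{Lemma_B_SP} and \ref{Asymp2}). Writing $S(x;\chi_1,\chi_2)=\int_0^1 S(\alpha,\chi_1)S(\alpha,\chi_2)T(-\alpha)\,d\alpha$, it splits off the main terms and reduces the error to $\int_0^1 W(\alpha,\chi_1)W(\alpha,\chi_2)T(-\alpha)\,d\alpha$, which it controls via Cauchy--Schwarz by $J(\chi)=\int|W(\alpha,\chi)|^2|T(\alpha)|\,d\alpha$. That quantity is bounded by Gallagher's lemma plus a Selberg-type short-interval mean value, $\int_x^{2x}|\psi(t+h,\chi)-\psi(t,\chi)-\delta_0(\chi)h|^2\,dt\ll hx^{2B_q^\ast}(\log qx)^4$, where the Vinogradov--Korobov region and Siegel's theorem enter to replace $\beta_\chi$ by $B_q^\ast$. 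It is this mean-square reduction, not a refined pointwise estimate on $\mathcal{Z}$, that captures the extra cancellation and lands on $x^{2B_q^\ast}$. If you want to fix your proof, you will need to replace the $\Sigma_3$ step with an $L^2$ argument of this kind; a purely pointwise treatment of the double zero sum cannot do it.
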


First in Section \ref{Asymp1} we  prove an
explicit formula with a weaker error estimate (Proposition \ref{prop1})
using a generalized Landau--Gonek formula for $L$-functions 
(Proposition \ref{LandauGonek} in Section \ref{Landau}).
This weaker form is an analogue of Granville \cite[Corrigendum, (2)]{Gra07},
which states
\begin{equation}
\label{nocongruence}
\sum_{n\le x}G(n)
=\frac{x^2}{2}-2\sum_{\substack{\rho\\|\Im\rho|\leq x}}\frac{x^{\rho+1}}{\rho(\rho+1)}
+O\left(x^{(2+4B)/3}(\log x)^2\right),
\end{equation}
and the proof of Proposition \ref{prop1} essentially runs along the line suggested
by \cite{Gra07}.
Therefore Sections \ref{Asymp1} and \ref{Landau} include a reconstruction of
Granville's argument for the asymptotic order.
However we can go further; we take this opportunity to 
prove (in Sections \ref{Lemma_B_SP} and \ref{Asymp2}) the stronger error estimate
\eqref{Asymptotic_B_SP-formula}, an 
 analogue of that announced in
\cite[(5.1)]{Gra07} (cf. \cite[Corrigendum, comments before (2)]{Gra07}),
using a kind of circle method
of the first author and Schlage-Puchta \cite{B_SP}.

With the help of  Theorem \ref{Asymptotic_B_SP} above the analytic continuation 
of the Dirichlet series 
$$\sum_{n=1}^{\infty}\frac{G(n;q,a,b)}{n^s}$$ 
is examined in Proposition \ref{Ruppel_continuation} (in Section \ref{Dirichlet_series})
and this enables us to establish  relations between the error terms in the average of Goldbach problems in arithmetic progressions and zeros of Dirichlet $L$-functions.

Further, we examine the case of $n$ with modulus conditions as in \cite [1B]{Gra07} where it is stated that the GRH for Dirichlet $L$-functions $L(s,\chi)$, over
all characters $\chi$, the modulus of which are odd squarefree divisors of
$q$, is equivalent to the estimate
\begin{align}\label{Gra-1B}
\sum_{\substack{n\leq x\\n\equiv 2\,(\md q)}}(G(n)-J(n))\ll x^{3/2+o(1)}.
\end{align}
Moreover  in \cite[Corrigendum, 1C]{Gra07} it is stated that if the estimate
\begin{align}\label{Gra-1C}
\sum_{\substack{n\leq x\\q\mid n}}G(n) =
\frac{1}{\varphi(q)}\sum_{n\leq x}G(n)+ O_q(x^{1+o(1)})
\end{align}
is attained then the GRH for Dirichlet $L$-functions $L(s,\chi)$,
$\chi\,(\md q)$ holds; and under this hypothesis  the last estimate would have the error term 
$O(x^{4/3} (\log x) ^2)$.

Here we extend \eqref{Gra-1B} with the general congruence condition
$n\equiv c$ for an arbitrary positive integer $c$ instead of the special case $n\equiv  2$.
Assuming the GRH for  $L$-functions $(\md q)$ 
we can deduce the estimate
\begin{equation}\label{eq:thm1B_}
\sum_{\substack{n\leq x\\n\equiv c\,(\md q)}}(G(n)-J(n))\ll
x^{3/2}.
\end{equation}
However in the other direction, 
we could not deduce satisfactory conclusions on
the size of $B_{q}$
when $a\ne b$.  
In particular we were unable then
to reconstruct the reverse implications for (\ref{Gra-1B})
and (\ref{Gra-1C}). In the following we 
give a further example of a condition
with which we can get the 
reverse implication.

\begin{thm}\label{thmsiegel}
Let $q,c$ be integers such that $(2,q)\mid c$.
\begin{enumerate}
\renewcommand{\labelenumi}{{\upshape (\arabic{enumi})}}
\item For $x\ge2$, we have
\begin{equation}
\label{thmsiegel_asymp}
\sum_{\substack{n\leq x\\n\equiv c\,(\md q)}}(G(n)-J(n))
\ll
x^{1+B_q},
\end{equation}
where the implicit constant is absolute.
\item
Assume that
\begin{equation}
\label{eq:thmsiegel}
\sum_{\substack{n\leq x\\n\equiv c\,(\md q)}}(G(n)-J(n))\ll_q
x^{1+d+\varepsilon}
\end{equation}
holds for some $1/2\leq d\leq 1$ and any $\varepsilon>0$.
If there exists a zero $\rho_0$ of $\prod_{\chi\,(\md q)}L(s,\chi)$ such that
\begin{enumerate}
\renewcommand{\labelenumii}{{\upshape(\alph{enumii})}}
\item $B_q=\Re\rho_0$
\item $\rho_0$ belongs to a unique character $\chi_1\ (\md q)$
\item the conductor $q^\ast$ of $\chi_1\ (\md q)$ is squarefree and satisfies $(c,q^\ast)=1$,
\end{enumerate}
then $B_q=\Re \rho_0\le d$.
\end{enumerate}
\end{thm}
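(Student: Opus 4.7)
\noindent\textit{Proof plan.}
The plan is to derive both parts from Theorem \ref{Asymptotic_B_SP} by opening up the congruence $n\equiv c\,(\md q)$ as
\[
\sum_{\substack{n\le x\\n\equiv c\,(\md q)}}G(n)
=\sum_{\substack{a,b\,(\md q),\,(ab,q)=1\\a+b\equiv c\,(\md q)}}S(x;q,a,b)+O_{q}(x\log^2 x),
\]
the error absorbing terms where $\ell$ or $m$ is a prime power of a prime dividing $q$.  Substituting Theorem \ref{Asymptotic_B_SP} and matching the main term $\#\{(a,b)\}\cdot x^2/(2\varphi(q)^2)$ to $\sum_{n\le x,\,n\equiv c\,(q)}J(n)$ via the singular-series formula for $J$ (valid with error $O_q(x\log x)$) yields
\[
\sum_{\substack{n\le x\\n\equiv c\,(\md q)}}(G(n)-J(n))
=-\frac{1}{\varphi(q)^2}\sum_{\chi\,(\md q)}T_\chi(c)\sum_{\rho_\chi}\frac{x^{\rho_\chi+1}}{\rho_\chi(\rho_\chi+1)}
+O_q\bigl(x^{2B_q^\ast}(\log qx)^5\bigr),
\]
where $T_\chi(c):=2\sum_{a\,(\md q),\,(a(c-a),q)=1}\overline{\chi(a)}$.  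Part (1) follows: the zero sum is bounded by $O(x^{1+B_q})$ via $|\rho_\chi(\rho_\chi+1)|\gg|\rho_\chi|^2$ together with the standard zero-density estimates, and $x^{2B_q^\ast}\le x^{1+B_q}$ since $B_q\ge 1/2$.

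For part (2), hypothesis (b) forces the coefficient of the $\rho_0$ term in the zero sum to equal $-T_{\chi_1}(c)/\varphi(q)^2$, coming solely from $\chi=\chi_1$.  The non-vanishing of $T_{\chi_1}(c)$ is then established by Möbius inversion on $[(c-a,q)=1]$, combined with orthogonality of $\chi_1$ over the subgroups $H_d:=\ker((\mathbb{Z}/q\mathbb{Z})^\times\to(\mathbb{Z}/d\mathbb{Z})^\times)$: the inner sum vanishes unless the conductor $q^\ast$ divides $d$, and summing the surviving contributions gives
\[
T_{\chi_1}(c)=2\mu(q^\ast)\overline{\chi_1^\ast(c)}\frac{\varphi(q)}{\varphi(q^\ast)}\prod_{\substack{p\mid q\\p\nmid q^\ast c}}\frac{p-2}{p-1}.
\]
This is non-zero: $\mu(q^\ast)\ne 0$ and $\chi_1^\ast(c)\ne 0$ by hypothesis (c), and the otherwise-vanishing $p=2$ factor is excluded because the standing compatibility $(2,q)\mid c$ forces $2\mid c$ whenever $2\mid q$.

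The final step is to isolate $\rho_0$ via a Mellin transform.  Let
\[
F(s):=\int_1^\infty\Bigl(\sum_{\substack{n\le x\\n\equiv c\,(\md q)}}(G(n)-J(n))\Bigr)x^{-s-1}\,dx.
\]
Hypothesis (\ref{eq:thmsiegel}) renders $F(s)$ analytic in $\{\Re s>1+d+\veps\}$ for every $\veps>0$.  On the other hand, integrating the explicit formula term-by-term extends $F(s)$ meromorphically to $\{\Re s>2B_q^\ast\}$ with a pole at each $s=\rho_\chi+1$ of residue $-T_\chi(c)/(\varphi(q)^2\rho_\chi(\rho_\chi+1))$ times the multiplicity of $\rho_\chi$.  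At $s=\rho_0+1$ this residue is non-zero by the preceding paragraph; so $\Re\rho_0>d$ would place the pole inside the region of analyticity of $F$, a contradiction.  Hence $B_q=\Re\rho_0\le d$.  The main technical point is the non-vanishing of $T_{\chi_1}(c)$, for which the compatibility $(2,q)\mid c$ and the squarefreeness of $q^\ast$ are both essential; once this is in hand, the Mellin isolation (made possible by hypothesis (b)) yields the clean bound $\Re\rho_0\le d$ without the ``$B_q=1$'' alternative seen in Theorem \ref{MainTheorem-1}(2).
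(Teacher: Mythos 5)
Your part (2) argument follows the paper's route quite closely: identify the Mellin transform (the paper uses the Dirichlet series $F_1(s)=\sum_{n\equiv c\,(q)}G(n)/n^s$, which is the same object after partial summation), obtain meromorphic continuation from the explicit formula, and use hypothesis (b) together with a character-sum non-vanishing lemma (the paper's Lemma~\ref{character_sum}) to show the residue at $s=\rho_0+1$ is nonzero. Your verification that the Euler factor at $p=2$ does not kill the residue via the standing hypothesis $(2,q)\mid c$ is exactly the point the paper makes. One slip: you claim meromorphic continuation to $\{\Re s>2B_q^\ast\}$, but $B_q^\ast$ depends on $x$, so that is not a sensible region; the correct statement is $\{\Re s>2B_q\}$, which suffices since hypothesis (a) gives $B_q<1$ and hence $2B_q<1+\Re\rho_0$.

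The genuine gap is in the very first step of part (1). You obtain the intermediate explicit formula by summing Theorem~\ref{Asymptotic_B_SP} over the $\asymp\varphi(q)$ residue pairs $(a,b)$ with $a+b\equiv c\,(\md q)$. But Theorem~\ref{Asymptotic_B_SP} has error $O(x^{2B_q^\ast}(\log qx)^5)$ with an \emph{absolute} implied constant, and summing it over $\asymp\varphi(q)$ pairs introduces an extra factor $\varphi(q)$ into the error, which you silently package as $O_q(\cdot)$. The paper explicitly warns against exactly this: ``it might seem that we can obtain this asymptotic formula by summing up Theorem~\ref{Asymptotic_B_SP} over residues. However this procedure violates the uniformity over $q$.'' The theorem you are proving asserts the bound $\ll x^{1+B_q}$ \emph{with absolute implied constant}, so $O_q(\cdot)$ is not good enough. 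To recover the absolute constant, the paper proves the separate Theorem~\ref{Asymptotic_1B}, not by summing \ref{Asymptotic_B_SP} over residues, but by going back to the bilinear error term $R(x;\chi_1,\chi_2)$ of Lemma~\ref{pre_lem}, summing first over the residue $a$ using orthogonality, and only then applying Cauchy--Schwarz and Lemma~\ref{J_estimate}; this collapses the would-be factor $\varphi(q)^2$ to $\varphi(q)$ and then cancels it. Secondarily, even granting the intermediate formula, your absorption ``$x^{2B_q^\ast}(\log qx)^5 \le x^{1+B_q}$'' is too crude: the log powers can dominate $x^{(1+B_q)-2B_q^\ast}$ when $B_q^\ast$ is close to $B_q$, and the paper's proof of \ref{MainTheorem-1}(1) handles this via a two-case analysis (large $q$ versus small $q$, then $1-B_q$ large versus small) invoking the Vinogradov--Korobov saving encoded in $B_q^\ast$. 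That case analysis must be reproduced here as well.
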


The  conditions on $\rho_0$ might resemble those of the Landau--Siegel zero
although  (2) above is not actually applicable to a Landau--Siegel zero
if there are some complex zeros of $L$-function of the same modulus
which are very close to the vertical line $\sigma=1$.

\newpage
To obtain the above results we require an asymptotic formula with $B_q^\ast$  as in Theorem \ref{Asymptotic_B_SP}.

\begin{thm}
\label{Asymptotic_1B}
For $x\ge2$, $\varepsilon>0$ and for any positive integer $c$ we have
\begin{multline*}
\sum_{\substack{n\le x\\n\equiv c\,(\md q)}}G(n)
=
\frac{\mathfrak{S}_q(c)}{2}x^2
-
\frac{2}{\varphi(q)^2}
\sum_{\substack{a=1\\(a(c-a),q)=1}}^q
\sum_{\chi\,(\md q)}\overline{\chi(a)}
\sum_{\rho_\chi}\frac{x^{\rho_\chi+1}}{\rho_\chi(\rho_\chi+1)}\\
+O(x^{2B_q^\ast}(\log qx)^5),
\end{multline*}
where 

\[
\mathfrak{S}_q(c)
=
\frac{1}{\varphi(q)}\prod_{\substack{p\mid q\\p\nmid c}}\frac{p-2}{p-1},
\]
and the implicit constant is absolute.
\end{thm}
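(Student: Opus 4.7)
The strategy is to reduce Theorem \ref{Asymptotic_1B} to Theorem \ref{Asymptotic_B_SP} by grouping the pairs $(\ell,m)$ with $\ell+m\equiv c\,(\md q)$ according to the residues of $\ell$ and $m$ modulo $q$. For each $a\bmod q$ with $(a(c-a),q)=1$, the pairs with $\ell\equiv a$ and $m\equiv c-a\,(\md q)$ contribute exactly $S(x;q,a,c-a)$, while the remaining pairs (where $\ell$ or $m$ is a prime power of a prime dividing $q$) contribute only $O(x(\log qx)^{2})$, a bound easily established using Chebyshev-type estimates and absorbed into the final error since $2B_q^{\ast}\ge1$.

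Inserting the asymptotic formula from Theorem \ref{Asymptotic_B_SP} into each $S(x;q,a,c-a)$, the constant main term yields $\frac{x^{2}}{2\varphi(q)^{2}}N_q(c)$, where
\[
N_q(c)=\#\{a\bmod q:(a(c-a),q)=1\}.
\]
By the Chinese Remainder Theorem, $N_q(c)$ factors into local contributions: each prime $p\mid q$ with $p\nmid c$ contributes $p^{v_p(q)-1}(p-2)$ and each $p\mid q$ with $p\mid c$ contributes $p^{v_p(q)-1}(p-1)$. The hypothesis $(2,q)\mid c$ precisely prevents a vanishing local factor at $p=2$. Comparing with $\varphi(q)=q\prod_{p\mid q}(1-1/p)$ and the definition of $\mathfrak{S}_q(c)$ establishes the identity $N_q(c)=\varphi(q)^{2}\mathfrak{S}_q(c)$, so the main term equals $\frac{1}{2}\mathfrak{S}_q(c)x^{2}$.

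For the zero-sum term, the involution $a\mapsto c-a$ preserves the summation condition and exchanges $\overline{\chi(a)}$ with $\overline{\chi(c-a)}$, so
\[
\sum_{\substack{a=1\\(a(c-a),q)=1}}^{q}\bigl(\overline{\chi(a)}+\overline{\chi(c-a)}\bigr)=2\sum_{\substack{a=1\\(a(c-a),q)=1}}^{q}\overline{\chi(a)}.
\]
After factoring out $-1/\varphi(q)^{2}$ from the resulting expression, this matches the double sum in the statement exactly.

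The delicate point is the error term: naively adding the absolute errors from Theorem \ref{Asymptotic_B_SP} across the $N_q(c)\le\varphi(q)$ residue classes produces an extra $\varphi(q)$ factor. To obtain the claimed uniform bound $O(x^{2B_q^{\ast}}(\log qx)^{5})$, one should not sum error terms class by class but instead retrace the circle-method argument underlying Theorem \ref{Asymptotic_B_SP} (Section \ref{Asymp2}) with the additive-character decomposition $\mathbf{1}_{n\equiv c\,(\md q)}=\frac{1}{q}\sum_{h=1}^{q}e\bigl(\frac{h(n-c)}{q}\bigr)$ replacing the orthogonality step. The character sum over the Farey arcs then treats all residue classes in a single pass, and this reorganisation is the main technical obstacle.
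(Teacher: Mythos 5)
You have the right overall structure: reducing to the coprime pairs, computing $N_q(c)=\varphi(q)^2\mathfrak{S}_q(c)$ by the Chinese Remainder Theorem, and using the involution $a\mapsto c-a$ to combine the two zero-sum contributions into the stated coefficient $2$. You have also correctly diagnosed the real obstacle: summing the absolute error from Theorem~\ref{Asymptotic_B_SP} over the $\asymp\varphi(q)$ residue classes $a$ inflates the bound by a factor of $\varphi(q)$ and destroys uniformity in $q$. The paper flags this exact point at the start of the proof.

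However, your proposed repair --- replacing the orthogonality step by the additive decomposition $\mathbf{1}_{n\equiv c\,(\md q)}=\frac1q\sum_{h}e(h(n-c)/q)$ and running the circle method over Farey arcs in a single pass --- is not what the paper does, and you do not carry it out. It is not clear that this route reproduces the explicit zero-sum main term, which is expressed in Dirichlet characters, nor that the mean-square analysis of Section~\ref{Lemma_B_SP} (built around $W(\alpha,\chi)$ for Dirichlet $\chi$) would survive a switch to additive characters without reintroducing the same loss.

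The paper's actual fix stays inside the Dirichlet-character framework and never invokes Theorem~\ref{Asymptotic_B_SP} per residue class. It applies the intermediate Lemma~\ref{pre_lem}, whose error term is the bilinear integral
\[
R(x;\chi_1,\chi_2)=\int_0^1 W(\alpha,\chi_1)W(\alpha,\chi_2)T(-\alpha)\,d\alpha,
\]
and performs the sums over $a$, $\chi_1$, $\chi_2$ \emph{inside} the integral:
\[
\sum_{\chi_1,\chi_2}\overline{\chi_1(a)}\,\overline{\chi_2(c-a)}\,R(x;\chi_1,\chi_2)
=
\int_0^1\Bigl(\sum_{\chi_1}\overline{\chi_1(a)}W(\alpha,\chi_1)\Bigr)
\Bigl(\sum_{\chi_2}\overline{\chi_2(c-a)}W(\alpha,\chi_2)\Bigr)T(-\alpha)\,d\alpha.
\]
Applying the Cauchy--Schwarz inequality and then the orthogonality
\[
\sum_{\substack{a=1\\(a(c-a),q)=1}}^q\Bigl|\sum_{\chi\,(\md q)}\overline{\chi(a)}W(\alpha,\chi)\Bigr|^2
\le
\sum_{\substack{a\,(\md q)\\(a,q)=1}}\Bigl|\sum_{\chi\,(\md q)}\overline{\chi(a)}W(\alpha,\chi)\Bigr|^2
=
\varphi(q)\sum_{\chi\,(\md q)}|W(\alpha,\chi)|^2
\]
cancels exactly one factor of $\varphi(q)$; the remaining $\frac1{\varphi(q)}\sum_\chi J(\chi)\ll x^{2B_q^\ast}(\log qx)^5$ then follows from Lemma~\ref{J_estimate}. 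This is the step you need to supply, and it is much closer to your starting point than the additive-character detour. One further small correction: Theorem~\ref{Asymptotic_1B} carries no hypothesis $(2,q)\mid c$; if $(2,q)\nmid c$ then $\mathfrak{S}_q(c)=0$ and the main term simply vanishes, consistent with the empty sum on the left-hand side.
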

Note that 
$\mathfrak{S}_q(c)=0$
if $(2,q)\nmid c$.
The above theorem is proven in Section \ref{Asymp2}.
To ensure the uniformity of $q$  it is not enough to sum Theorem \ref{Asymptotic_B_SP} over residues and we need other tools like Lemma \ref{pre_lem}
.    Finally, using Theorem \ref{Asymptotic_1B}, we  give the proof of
\eqref{eq:thm1B_} and Theorem \ref{thmsiegel} in Section \ref{Dirichlet_series}.
In Section~\ref{sec:exclusion}, we give the proof for the supplement
of Theorem \ref{MainTheorem-1} (2) in the case $a=b$.

\medskip
\textbf{Acknowledgements.}
The first two authors are grateful to Professor Andrew Granville
for helpful discussions.
Thanks are due to Professor Masatoshi Suzuki for useful information, and to
Professor Keiju Sono and the referee for their valuable comments on
the first version of the manuscript. 
We particularly thank Professor Imre Ruzsa for the  idea that
improves Theorem~\ref{MainTheorem-1} (2) in the case $a=b$. 

\section{Some preliminaries}\label{Preliminary_zero}
In this section we fix notation on the zeros of Dirichlet $L$-functions
and give some basic lemmas on Dirichlet $L$-functions.
Results used directly from \cite{MV} are only cited but in other cases 
details are added . 

As we mentioned in Section \ref{Intro},
we denote by $\chi\ (\md q)$ a Dirichlet character $(\md q)$
and by $\chi^\ast\ (\md q^\ast)$ the primitive character inducing $\chi\ (\md q)$.
If there is no specific mention, any statement with $\chi\ (\md q)$ is stated for any $q\ge1$ and any character $\chi\ (\md q)$.

We denote the Dirichlet $L$-function associated to $\chi\ (\md q)$ by $L(s,\chi)$.
We say a zero of $L(s,\chi)$ is non-trivial, if it is contained in the strip $0<\sigma<1$.
We denote by $\rho_\chi$ non-trivial zeros of $L(s,\chi)$ with the real part $\beta_\chi$ and the imaginary part $\gamma_\chi$.
As a summation variable, the letter $\rho_\chi$ runs through all non-trivial zeros of $L(s,\chi)$ counted with multiplicity. We denote the Landau--Siegel zero of $(\md q)$ by $\beta_1$.

We let $\delta_0(\chi)=1$ if $\chi=\chi_0$ is the principal character and $\delta_0(\chi)=0$ otherwise.
Similarly we let $\delta_1(\chi)=1$ if $\chi$ is the exceptional character
(that is, whose $L$-function has a Landau--Siegel zero) and $\delta_1(\chi)=0$ otherwise.

We first evoke some lemmas for sums over non-trivial zeros.

\begin{lem}[{\cite[Theorem 10.17]{MV}}]
\label{Lem:zero_counting_local}
For any $T\ge0$, we have
\[
\sum_{\substack{\rho_\chi\\T\le|\gamma_\chi|\le T+1}}1\ll\log q(|T|+2).
\]
\end{lem}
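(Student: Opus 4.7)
The plan is to follow the classical Riemann--von Mangoldt style argument: first reduce to the primitive case and then bound the zero count via Jensen's formula applied to the completed $L$-function.

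First I would reduce to primitive characters. If $\chi^\ast\ (\md q^\ast)$ is the primitive character inducing $\chi\ (\md q)$, then
\[
L(s,\chi)=L(s,\chi^\ast)\prod_{\substack{p\mid q\\p\nmid q^\ast}}\bigl(1-\chi^\ast(p)p^{-s}\bigr),
\]
and each Euler-correction factor has zeros only on the line $\sigma=0$, hence none in the critical strip. Consequently the non-trivial zeros of $L(s,\chi)$ coincide with those of $L(s,\chi^\ast)$ (counted with multiplicity), and it suffices to prove the estimate for primitive characters of conductor $q^\ast\le q$.

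For primitive $\chi\ (\md q)$ I would work with the completed $L$-function
\[
\xi(s,\chi):=\bigl(s(s-1)\bigr)^{\delta_0(\chi)}\Bigl(\frac{q}{\pi}\Bigr)^{(s+\mathfrak{a})/2}\Gamma\Bigl(\frac{s+\mathfrak{a}}{2}\Bigr)L(s,\chi),
\]
where $\mathfrak{a}\in\{0,1\}$ depends on the parity of $\chi$. This is entire of order $1$, and its zeros in $0<\sigma<1$ are exactly the non-trivial zeros $\rho_\chi$. I would then apply Jensen's formula on the disk $|s-s_0|\le 3$ centred at $s_0=2+iT$. On this disk, Stirling's formula controls the gamma factor, and the convexity bound (or the trivial Euler-product bound in $\sigma\ge 3/2$ combined with the functional equation) yields $\log|L(s,\chi)|\ll\log(q(|T|+2))$, so
\[
\log|\xi(s,\chi)|\ll\log\bigl(q(|T|+2)\bigr)\qquad (|s-s_0|\le 3).
\]
At the centre, $|L(s_0,\chi)|\gg 1$ by the Euler product, while the $q$- and gamma-factors contribute terms of size $\log(q(|T|+2))$ that are explicit. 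Jensen's inequality on the slightly smaller disk $|s-s_0|\le 5/2$ then bounds the number of zeros in that disk by $O(\log(q(|T|+2)))$. Since the strip $\{T\le|\gamma_\chi|\le T+1,\ 0<\beta_\chi<1\}$ sits inside this disk (the critical strip has width $1$), the lemma follows.

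The main technical obstacle is the uniform-in-$q$ bookkeeping of the upper and lower bounds for $\xi(s,\chi)$, together with neutralising the pole of $L(s,\chi_0)$ at $s=1$ through the prefactor $s(s-1)$ when $\chi$ is principal; once these are set up, the upper bound from Jensen and the lower bound at the centre differ by exactly $O(\log(q(|T|+2)))$, giving the stated estimate. All of this is standard and is precisely the content of \cite[Theorem~10.17]{MV}, which is why the authors cite it rather than reprove it.
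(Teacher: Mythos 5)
The paper itself does not give a proof of this lemma; it simply cites \cite[Theorem 10.17]{MV}, and your argument is indeed the standard Jensen-style zero-counting proof that underlies that reference. The overall structure (reduce to primitive $\chi$, pass to the completed function $\xi(s,\chi)$, bound $\log|\xi|$ uniformly in $q$ and $T$ on a fixed disk around $2+iT$, get a lower bound at the centre from the Euler product, and apply Jensen) is correct and is the right approach.

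One genuine slip: the disk $|s-(2+iT)|\le 5/2$ does \emph{not} contain the whole region $\{T\le|\gamma_\chi|\le T+1,\ 0<\beta_\chi<1\}$ once $T\ge 1$, because that region has a second component around imaginary part $-T$ (e.g.\ the point $i(T+1)$ with $\beta=0$ at negative ordinate is at distance about $2T$ from $2+iT$). So your final sentence ``the strip sits inside this disk'' is false for general $T\ge 0$. The fix is trivial and standard: run the same Jensen argument a second time centred at $\bar s_0=2-iT$ and add the two bounds, or equivalently observe that zeros of $L(s,\chi)$ with ordinate in $[-T-1,-T]$ correspond to zeros of $L(s,\bar\chi)$ with ordinate in $[T,T+1]$ and the estimate is uniform over all characters mod $q$. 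With that patch the argument is complete and matches the cited source.
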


\begin{lem}
\label{Lem:zero_sum}
For any $T\ge1$ and $\chi\ (\md q)$, we have
\[
\sum_{\substack{\rho_\chi\neq1-\beta_1\\|\gamma_\chi|\le T}}\frac{1}{|\rho_\chi|}\ll(\log2qT)^2,\quad
\sum_{\substack{\rho_\chi\\|\gamma_\chi|\le T}}\frac{1}{|\rho_\chi|}
\ll(\log 2qT)^2+\delta_1(\chi)q^{1/2}(\log q)^2.
\]
\end{lem}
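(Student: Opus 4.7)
The plan is to split both sums at $|\gamma_\chi|=1$. For zeros with $1\le|\gamma_\chi|\le T$, one uses $|\rho_\chi|\ge|\gamma_\chi|$ and partitions the range into the unit intervals $n\le|\gamma_\chi|<n+1$. Lemma \ref{Lem:zero_counting_local} furnishes $O(\log q(n+2))$ zeros per interval, so
\[
\sum_{1\le|\gamma_\chi|\le T}\frac{1}{|\rho_\chi|}
\ll
\sum_{1\le n\le T}\frac{\log q(n+2)}{n}
\ll(\log 2qT)^2,
\]
which is good enough for both sums in the lemma.

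For the low range $|\gamma_\chi|<1$, Lemma \ref{Lem:zero_counting_local} with $T=0$ shows there are only $O(\log q)$ such zeros. The key observation is that the set of non-trivial zeros of $L(s,\chi)$ is symmetric under the map $\rho\mapsto 1-\bar\rho$: complex conjugation sends zeros of $L(s,\chi)$ to those of $L(s,\bar\chi)$, and the functional equation sends the latter back to zeros of $L(s,\chi)$ via $\rho'\mapsto 1-\rho'$, so the composition is the asserted self-map. Consequently the classical zero-free region $\sigma>1-c/\log q(|t|+2)$, whose sole exception is the Landau--Siegel zero $\beta_1$, reflects to a zero-free neighbourhood of $s=0$ whose sole exception is $1-\beta_1$. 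Thus every $\rho_\chi\neq 1-\beta_1$ with $|\gamma_\chi|<1$ satisfies $|\rho_\chi|\gg 1/\log q$, contributing $O((\log q)^2)$ in total and completing the proof of the first estimate.

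For the second bound, one must re-include $\rho_\chi=1-\beta_1$ when it exists, which happens exactly when $\chi$ is the exceptional character. The extra contribution is $\delta_1(\chi)/|1-\beta_1|$, and Page's effective lower bound $1-\beta_1\gg q^{-1/2}(\log q)^{-2}$ bounds this by $\delta_1(\chi)\,q^{1/2}(\log q)^2$, yielding the second stated estimate.

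The main technical subtlety is the reflection step: one should verify that the symmetry $\rho\mapsto 1-\bar\rho$ preserves multiplicities of non-trivial zeros (immediate from the two-step argument above) and that zeros of possible imprimitive Euler factors do not intrude. The latter is harmless because such zeros lie on the line $\sigma=0$, hence outside the open strip $0<\sigma<1$ that defines a \emph{non-trivial} zero in the convention of this paper.
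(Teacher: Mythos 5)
Your proposal is correct and follows essentially the same route as the paper: unit-interval dissection with Lemma~\ref{Lem:zero_counting_local} for $1\le|\gamma_\chi|\le T$, and for $|\gamma_\chi|<1$ the reflection of the standard zero-free region through $\rho\mapsto 1-\bar\rho$ to get $|\rho_\chi|\ge\beta_\chi\gg 1/\log 2q$ away from $1-\beta_1$, with Page's bound \eqref{EQ:Landau_Page_bound} handling the possible Siegel zero in the second estimate. The paper phrases the low-height step as the substitution $\sum 1/\beta_\chi=\sum 1/(1-\beta_\chi)$ followed by the zero-free region, which is the same symmetry you spell out explicitly; your remark on imprimitive Euler-factor zeros being on $\sigma=0$ and hence excluded by the paper's definition of ``non-trivial'' is a correct supporting observation rather than a new idea.
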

\begin{proof}
For the first estimate we dissect the sum at $|\gamma_\chi|=1$ as
\[
\sum_{\substack{\rho_\chi\neq1-\beta_1\\|\gamma_\chi|\le T}}\frac{1}{|\rho_\chi|}
=
\sum_{\substack{\rho_\chi\neq1-\beta_1\\|\gamma_\chi|\le 1}}\frac{1}{|\rho_\chi|}
+
\sum_{\substack{\rho_\chi\\1<|\gamma_\chi|\le T}}\frac{1}{|\rho_\chi|}.
\]
For the first sum  Lemma \ref{Lem:zero_counting_local} gives
\begin{align*}
\sum_{\substack{\rho_\chi\neq1-\beta_1\\|\gamma_\chi|\le 1}}\frac{1}{|\rho_\chi|}
&\le
\sum_{\substack{\rho_\chi\neq1-\beta_1\\|\gamma_\chi|\le 1}}\frac{1}{\beta_\chi}
=
\sum_{\substack{\rho_\chi\neq\beta_1\\|\gamma_\chi|\le 1}}\frac{1}{1-\beta_\chi}\\
&\ll
\sum_{\substack{\rho_\chi\\|\gamma_\chi|\le 1\\\beta_\chi>1-c_0/\log2q}}\frac{1}{1-\beta_\chi}
\ll
(\log2q)\sum_{\substack{\rho_\chi\\|\gamma_\chi|\le1}}1\ll(\log 2q)^2\ll(\log 2qT)^2,
\end{align*}
where $c_0>0$ is some small absolute constant while
for the second sum, Lemma \ref{Lem:zero_counting_local} gives
\[
\sum_{\substack{\rho_{\chi}\\1<|\gamma_{\chi}|\leq T}}\frac{1}{|\rho_{\chi}|}
\leq \sum_{n\leq T}\sum_{\substack{\rho_{\chi}\\ n<|\gamma_{\chi}|\leq n+1}}
\frac{1}{|\rho_{\chi}|}
\ll \sum_{n\leq T}\frac{\log q(n+2)}{n}
\ll (\log2qT)^2.
\]
Thus the first estimate follows.
The second estimate is obtained by the first estimate
combined with the well-known bound \cite[Corollary 11.12]{MV} of the Siegel zero
\begin{equation}
\label{EQ:Landau_Page_bound}
\beta_1>1-\frac{c_2}{q^{1/2}(\log q)^2},
\end{equation}
where $c_2>0$ is some absolute constant.
\end{proof}

\begin{lem}
\label{Lem:zero_sum2}
For any $T\ge1$ and $\chi\ (\md q)$, we have
\[
\sum_{\substack{\rho_\chi\\|\rho_\chi|>T}}\frac{1}{|\rho_\chi|^2}\ll\frac{\log2qT}{T},\quad
\sum_{\rho_\chi}\frac{1}{|\rho_\chi(\rho_\chi+1)|}\ll(\log 2q)^2+\delta_1(\chi)q^{1/2}(\log q)^2.
\]
\end{lem}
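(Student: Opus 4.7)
The plan is to handle both estimates by dyadic decomposition of the zeros according to the size of $|\gamma_\chi|$, invoking Lemma \ref{Lem:zero_counting_local} to count the zeros in each dyadic block and Lemma \ref{Lem:zero_sum} to deal with the low-lying zeros (where the Siegel zero may intervene).

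For the first bound, I would fix $T\ge1$ and decompose
\[
\sum_{\substack{\rho_\chi\\|\rho_\chi|>T}}\frac{1}{|\rho_\chi|^2}
\le
\sum_{n\ge T}\,\sum_{\substack{\rho_\chi\\ n<|\gamma_\chi|\le n+1}}\frac{1}{|\rho_\chi|^2}.
\]
On each inner block $|\rho_\chi|\ge n$, and Lemma \ref{Lem:zero_counting_local} bounds the number of zeros by $\ll\log q(n+2)$, so the sum is
\[
\ll\sum_{n\ge T}\frac{\log q(n+2)}{n^2}\ll\frac{\log 2qT}{T},
\]
by an elementary estimate (separating the $\log q$ and $\log(n+2)$ contributions). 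This is a close analogue of the computation already performed in the proof of Lemma \ref{Lem:zero_sum}.

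For the second bound, I would split the sum at $|\gamma_\chi|=1$. In the high range $|\gamma_\chi|>1$, both $|\rho_\chi|\asymp|\gamma_\chi|$ and $|\rho_\chi+1|\asymp|\gamma_\chi|$, so $|\rho_\chi(\rho_\chi+1)|\gg|\gamma_\chi|^2$, and dyadic decomposition via Lemma \ref{Lem:zero_counting_local} gives
\[
\sum_{\substack{\rho_\chi\\|\gamma_\chi|>1}}\frac{1}{|\rho_\chi(\rho_\chi+1)|}
\ll\sum_{n\ge1}\frac{\log q(n+2)}{n^2}\ll\log 2q,
\]
which is absorbed into $(\log 2q)^2$. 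In the low range $|\gamma_\chi|\le1$, I use the trivial bound $|\rho_\chi+1|\ge1+\beta_\chi\ge1$ to conclude
\[
\sum_{\substack{\rho_\chi\\|\gamma_\chi|\le1}}\frac{1}{|\rho_\chi(\rho_\chi+1)|}
\le\sum_{\substack{\rho_\chi\\|\gamma_\chi|\le1}}\frac{1}{|\rho_\chi|},
\]
and the right-hand side is majorized by the second estimate of Lemma \ref{Lem:zero_sum} (taking $T=1$), producing the desired $(\log 2q)^2+\delta_1(\chi)q^{1/2}(\log q)^2$.

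The only real subtlety lies in the low range: the possible presence of the zero $1-\beta_1$ near the origin when $\chi$ is exceptional is what forces the extra Siegel-type term, and this is handled precisely by invoking the second (rather than the first) estimate of Lemma \ref{Lem:zero_sum}. Apart from this, the argument is bookkeeping with dyadic sums, and no further obstacle is expected.
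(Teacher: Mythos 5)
Your proposal is correct and follows essentially the same route as the paper: dyadic decomposition with Lemma \ref{Lem:zero_counting_local} for the first estimate, and splitting at $|\gamma_\chi|=1$ for the second, bounding the low range via $|\rho_\chi+1|\ge 1$ together with the second estimate of Lemma \ref{Lem:zero_sum} at $T=1$, and the high range via $|\rho_\chi(\rho_\chi+1)|\gg|\gamma_\chi|^2$. The only cosmetic difference is that the paper invokes its own first estimate at $T=1$ for the high range instead of redoing the dyadic sum.
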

\begin{proof}
The first estimate is again obtained by using Lemma \ref{Lem:zero_counting_local} ,i.e.
\begin{align*}
\sum_{\substack{\rho_\chi\\|\gamma_\chi|>T}}\frac{1}{|\rho_\chi|^2}
\le
\sum_{n=[T]}^\infty\sum_{\substack{\rho_\chi\\n<|\gamma_\chi|\le n+1}}\frac{1}{|\rho_\chi|^2}
\ll
\sum_{n=[T]}^\infty\frac{\log qn}{n^2}
\ll
\frac{\log qT}{T},
\end{align*}
whereas the last estimate can be obtained by  comparison to an integral.
For the latter estimate, we combine the former one with Lemma \ref{Lem:zero_sum}.
This gives
\[
\sum_{\rho_\chi}\frac{1}{|\rho_\chi(\rho_\chi+1)|}
\le
\sum_{\substack{\rho_\chi\\|\gamma_\chi|\le 1}}\frac{1}{|\rho_\chi|}
+
\sum_{\substack{\rho_\chi\\|\gamma_\chi|>1}}\frac{1}{|\rho_\chi|^2}
\ll
(\log 2q)^2+\delta_1(\chi)q^{1/2}(\log q)^2.
\]

\end{proof}

We next prepare some explicit formulas for the sum
\[
\psi(x,\chi)=\sum_{n\le x}\chi(n)\Lambda(n).
\]
The next result appears in \cite{MV}
with the  restriction that $\chi\ (\md q)$  be primitive.
\begin{lem}
\label{Lem:explicit}
For any $u,T\ge2$, the explicit formula
\[
\psi(u,\chi)=\delta_0(\chi)u
-\sum_{\substack{\rho_{\chi} \\ |\gamma_{\chi}|\leq T}}
\frac{u^{\rho_{\chi}}}
{\rho_{\chi}}+C(\chi^\ast)+
E(u,T,\chi)
\]
holds, where
\[
E(u,T,\chi)\ll(\log 2q)(\log u)+\frac{u}{T}(\log quT)^2
\]
and $C(\chi)$ is some constant depending only on $\chi$.
\end{lem}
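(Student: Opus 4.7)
The plan is to reduce the imprimitive case to the primitive case by relating $\chi$ to its inducing primitive character $\chi^\ast\ (\md q^\ast)$, for which the explicit formula is already available in \cite{MV}.

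First, I would write
\[
\psi(u,\chi)=\sum_{\substack{n\le u\\(n,q)=1}}\chi^\ast(n)\Lambda(n)=\psi(u,\chi^\ast)-\sum_{\substack{p\mid q\\p\nmid q^\ast}}\log p\sum_{\substack{k\ge1\\p^k\le u}}\chi^\ast(p)^k,
\]
since $\chi(n)=\chi^\ast(n)$ when $(n,q)=1$ and $\chi(n)=0$ otherwise, and $\Lambda$ is supported on prime powers. Bounding the inner geometric sum trivially by $\lfloor\log u/\log p\rfloor$ and using $\omega(q)\ll\log2q$, the correction term is
\[
\sum_{p\mid q}\log p\cdot\bigl\lfloor\log u/\log p\bigr\rfloor\le\omega(q)\log u\ll(\log2q)(\log u),
\]
which is absorbed into $E(u,T,\chi)$.

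Next I would apply the truncated explicit formula for primitive characters, which is the standard version of \cite{MV} (Theorem 12.5 together with its truncated form, Corollary 12.11): for the primitive $\chi^\ast\ (\md q^\ast)$,
\[
\psi(u,\chi^\ast)=\delta_0(\chi^\ast)u-\sum_{\substack{\rho_{\chi^\ast}\\|\gamma_{\chi^\ast}|\le T}}\frac{u^{\rho_{\chi^\ast}}}{\rho_{\chi^\ast}}+C(\chi^\ast)+O\!\left(\frac{u(\log q^\ast uT)^2}{T}+\log u\right),
\]
where $C(\chi^\ast)$ collects the $x$-independent contributions from the functional equation side (the value of $L'/L$ at $0$ and the trivial zero sum). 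Since $q^\ast\mid q$, one has $\log q^\ast uT\le\log quT$, so the main error is $\ll u(\log quT)^2/T$.

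Finally, I would observe that the non-trivial zeros are unchanged in passing from $L(s,\chi^\ast)$ to $L(s,\chi)=L(s,\chi^\ast)\prod_{p\mid q,\,p\nmid q^\ast}(1-\chi^\ast(p)p^{-s})$: the extra Euler factors can only vanish where $|p^{-s}|=1$, i.e.\ on $\sigma=0$, which lies outside the critical strip $0<\sigma<1$. Hence $\rho_{\chi^\ast}=\rho_\chi$ (with multiplicities) and $\delta_0(\chi^\ast)=\delta_0(\chi)$, so substituting and combining the two error contributions yields exactly
\[
E(u,T,\chi)\ll(\log2q)(\log u)+\frac{u}{T}(\log quT)^2,
\]
as claimed. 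The only non-routine point is the observation that the imprimitive zero set is identical to the primitive one inside the critical strip; once that is in hand, the rest is bookkeeping, so no step should be a serious obstacle.
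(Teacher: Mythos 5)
Your proof is correct and takes essentially the same approach as the paper: reduce the imprimitive case to the primitive one (noting $\psi(u,\chi)-\psi(u,\chi^\ast)\ll(\log 2q)(\log u)$ and that the non-trivial zero sets coincide) and invoke the Montgomery--Vaughan explicit formulas for primitive characters (their Theorems 12.5 and 12.10), absorbing the $x$-independent functional-equation contributions into $C(\chi^\ast)$ and the rest into $E(u,T,\chi)$. The only cosmetic difference is that the paper treats the trivial and non-principal primitive cases explicitly via (12.3)--(12.4) and (12.6) of \cite{MV} before passing to imprimitive $\chi$, while you cite the truncated formula in one step; the underlying argument and bounds are identical.
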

\begin{proof}
For primitive $\chi$, this follows immediately from Theorem 12.5 and 12.10 of \cite{MV} as below.
If $\chi$ is trivial so that $q=1$, we use Theorem 12.5 of \cite{MV} with $x=u$.
We can estimate the last three terms on the right-hand side of (12.3) of \cite{MV} as
\[
-\log 2\pi-\frac{1}{2}\log(1-1/u^2)+R(u,T)\ll(\log 2q)(\log u)+\frac{u}{T}(\log quT)^2,
\]
by using (12.4) of \cite{MV}. This gives the assertion for the case when $\chi$ is trivial.
If $\chi$ is non-principal, we use Theorem 12.10 of \cite{MV} with $x=u$.
The last four terms on the right-hand side of (12.6) of \cite{MV} can be rewritten as
\[
-\frac{1}{2}\log(u-1)-\frac{\chi(-1)}{2}\log(u+1)+C(\chi)+R(u,T;\chi)
\]
\[
=
C(\chi)+O\left((\log 2q)(\log u)+\frac{u}{T}(\log quT)^2\right).
\]
This gives the assertion for the case of $\chi$ being primitive and non-principal.

If $\chi$ is imprimitive, then it suffices to note that the non-trivial zeros of $L(s,\chi)$
are those of $L(s,\chi^\ast)$ and that
\begin{equation}
\label{EQ:remove_coprimality}
\begin{aligned}
\psi(u,\chi)-\psi(u,\chi^\ast)
\ll
\sum_{\substack{n\le u\\(n,q)>1}}\Lambda(n)
&=
\sum_{p\mid q}(\log p)\left[\frac{\log u}{\log p}\right]\\
&\le
\left(\frac{\log u}{\log 2}\right)\sum_{p\mid q}(\log p)\le(\log 2q)(\log u),
\end{aligned}
\end{equation}
which is absorbed into $E(u,T,\chi)$. 
\end{proof}

When we substitute the above explicit formula into some sum or integral,
we need to use a uniform parameter $T$ and a uniform bound of the error term.
Also it is convenient to work  with the case $0\le u<2$.
Thus we modify the above explicit formula in the following form.
\begin{lem}\label{lem-1}
For any $x\geq T\geq 2$ and $x\ge u\ge0$, the explicit formula
\begin{align}\label{1-3}
\psi(u,\chi)=\delta_0(\chi)u-\sum_{\substack{\rho_{\chi} \\ |\gamma_{\chi}|\leq T}}
\frac{u^{\rho_{\chi}}}{\rho_{\chi}}+O\left(\frac{x}{T}(\log qx)^2+\delta_1(\chi)q^{1/2}(\log q)^2\right)
\end{align}
holds.
\end{lem}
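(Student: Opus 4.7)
The plan is to derive this uniform version from Lemma \ref{Lem:explicit} by splitting into the two regimes $u \geq 2$ and $0 \leq u < 2$, with the main bookkeeping being to absorb the constant $C(\chi^\ast)$ into the stated error term.

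For $u\geq 2$, I would apply Lemma \ref{Lem:explicit} directly. Since $u\leq x$ and $2\leq T\leq x$, one has $\log(quT)\ll\log(qx)$, so the error $E(u,T,\chi)$ from that lemma satisfies
\[
(\log 2q)(\log u)+\frac{u}{T}(\log quT)^2
\ll (\log qx)^2+\frac{x}{T}(\log qx)^2
\ll \frac{x}{T}(\log qx)^2,
\]
where the final step uses $T\leq x$ so that $(\log qx)^2\leq \frac{x}{T}(\log qx)^2$. The constant $C(\chi^\ast)$ is independent of $u$ and $T$, and is expressible in terms of $L'(0,\chi^\ast)/L(0,\chi^\ast)$ together with known elementary constants (cf.\ the proof of Theorems 12.5 and 12.10 of \cite{MV}). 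By the standard bound on this quantity, $|C(\chi^\ast)|\ll\log q$ for non-exceptional $\chi$, while for an exceptional $\chi$ a Siegel zero contributes a term of size at most $(1-\beta_1)^{-1}\ll q^{1/2}(\log q)^2$ through the Page bound \eqref{EQ:Landau_Page_bound}. In either case,
\[
|C(\chi^\ast)|\ll(\log q)^2+\delta_1(\chi)\,q^{1/2}(\log q)^2,
\]
which fits inside the error term stated in \eqref{1-3}.

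For $0\leq u<2$, the left-hand side satisfies $\psi(u,\chi)=0$ since the defining sum is empty, so what must be verified is that the right-hand side is itself $O$ of the stated error. The main term $\delta_0(\chi)u\leq 2$ is $O(1)$, and for the truncated zero sum I would use $|u^{\rho_\chi}|=u^{\beta_\chi}\leq 2$ together with Lemma \ref{Lem:zero_sum}:
\[
\Bigl|\sum_{|\gamma_\chi|\leq T}\frac{u^{\rho_\chi}}{\rho_\chi}\Bigr|
\leq 2\sum_{|\gamma_\chi|\leq T}\frac{1}{|\rho_\chi|}
\ll(\log 2qT)^2+\delta_1(\chi)\,q^{1/2}(\log q)^2.
\]
Since $T\leq x$, this is again absorbed into $\frac{x}{T}(\log qx)^2+\delta_1(\chi)\,q^{1/2}(\log q)^2$.

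The main obstacle is controlling $C(\chi^\ast)$: one must track how a possible Siegel zero enters this constant through the logarithmic derivative at $s=0$, and then apply the Page bound to turn that contribution into the harmless $\delta_1(\chi)\,q^{1/2}(\log q)^2$ summand. Once this is settled, the remainder is a routine unification of the error terms across the two regimes of $u$.
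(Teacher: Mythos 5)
Your proof is correct and follows essentially the same route as the paper: split into $u\ge 2$ (apply Lemma~\ref{Lem:explicit}, absorb $E(u,T,\chi)$, and bound $C(\chi^\ast)$ via the Siegel-zero/Page estimate \eqref{EQ:Landau_Page_bound}) and $0\le u<2$ (trivial left-hand side, zero sum bounded via Lemma~\ref{Lem:zero_sum}). The only cosmetic difference is that the paper tracks $C(\chi^\ast)$ through $(L'/L)(1,\overline{\chi^\ast})$ via MV~(12.7) and Theorem~11.4 rather than through $(L'/L)(0,\chi^\ast)$, but these are equivalent by the functional equation and give the same bound.
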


\begin{proof}
We first consider the case $u\ge2$.
For this  we use Lemma \ref{Lem:explicit}.
Since $u\le x\le T$, the error term of Lemma \ref{Lem:explicit} is
\[
E(u,T,\chi)
\ll(\log 2q)(\log x)+\frac{x}{T}(\log qxT)^2
\ll\frac{x}{T}(\log qxT)^2.
\]
Further, by Theorem 11.4 of \cite{MV}, we know that 
$(L'/L)(1,\overline{\chi^*})\ll \log 2q$ 
if $\chi$ is not exceptional, while
for the exceptional $\chi$ and exceptional zero $\beta_{1}$,
\eqref{EQ:Landau_Page_bound} gives
\[
\frac{L'}{L}(1,\overline{\chi^*})=\frac{1}{1-\beta_{1}} + O(\log 2q) \ll q^{1/2}(\log q)^2.
\]
Therefore by (12.7) of \cite{MV}, we obtain
\begin{equation}
\label{EQ:D_estimate}
C(\chi^\ast)
\ll \log 2q+\delta_1(\chi)q^{1/2}(\log q)^2
\ll \frac{x}{T}(\log qx)^2+\delta_1(\chi)q^{1/2}(\log q)^2
\end{equation}
for non-principal $\chi$, from which the lemma follows for the case $u\ge2$.

The remaining case is when $0\leq u<2$.
Now the sum on the right-hand side of \eqref{1-3} is estimated by using Lemma \ref{Lem:zero_sum} as
\begin{align*}
\sum_{\substack{\rho_{\chi} \\ |\gamma_{\chi}|\leq T}}
\frac{u^{\rho_{\chi}}}{\rho_{\chi}}
\ll u\sum_{\substack{\rho_\chi\\|\gamma_\chi|\le T}}\frac{1}{|\rho_\chi|}
&\ll (\log qT)^2+\delta_1(\chi)q^{1/2}(\log q)^2\\
&\ll \frac{x}{T}(\log qx)^2+\delta_1(\chi)q^{1/2}(\log q)^2.
\end{align*}
since $T\le x$. On the other hand, when $u<2$, the left-hand side of \eqref{1-3} is zero.
Therefore the assertion holds trivially.
\end{proof}

\section{An asymptotic formula for $S(x;q,a,b)$}\label{Asymp1}

In this section, we deduce a prototype of Theorem \ref{Asymptotic_B_SP} along the line of \cite{Gra07}:

\begin{prop}\label{prop1}
For integers $a,b,q$ with $(ab,q)=1$, we have
\begin{multline*}
S(x;q,a,b)=\frac{x^2}{2\varphi(q)^2}
-\frac{1}{\varphi(q)^2}\sum_{\chi\,(\md q)}(\overline{\chi(a)}+\overline{\chi(b)})
\sum_{\rho_{\chi}}
\frac{x^{\rho_{\chi}+1}}{\rho_{\chi}(\rho_{\chi}+1)}\\
+O\left(x^{(2+4B_q)/3}(\log qx)^4\right),
\end{multline*}
where the implicit constant is absolute.
\end{prop}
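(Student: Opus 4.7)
First I would apply orthogonality of Dirichlet characters $(\md q)$ to decompose
\[
S(x;q,a,b) = \frac{1}{\varphi(q)^2}\sum_{\chi_1,\chi_2\,(\md q)}\overline{\chi_1(a)\chi_2(b)}\,U(x;\chi_1,\chi_2),
\]
where $U(x;\chi_1,\chi_2) := \sum_{\ell+m\le x}\chi_1(\ell)\Lambda(\ell)\chi_2(m)\Lambda(m) = \sum_{m\le x}\chi_2(m)\Lambda(m)\,\psi(x-m,\chi_1)$. This reduces the problem to understanding the bilinear form $U$ in the two $\chi$-twisted Chebyshev functions.

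Next, with a truncation parameter $T\le x$ to be optimized later, I would insert the explicit formula of Lemma \ref{lem-1} into $\psi(x-m,\chi_1)$. The $\delta_0(\chi_1)(x-m)$ main term yields $\delta_0(\chi_1)\int_0^x\psi(u,\chi_2)\,du$, and a second application of Lemma \ref{lem-1} to $\psi(u,\chi_2)$ produces $\delta_0(\chi_1)\delta_0(\chi_2)x^2/2$ together with a single zero sum over $\rho_{\chi_2}$. For each zero $\rho_{\chi_1}$ in the truncated zero piece, Abel summation gives
\[
\sum_{m\le x}\chi_2(m)\Lambda(m)(x-m)^{\rho_{\chi_1}} = \rho_{\chi_1}\int_0^x(x-u)^{\rho_{\chi_1}-1}\psi(u,\chi_2)\,du,
\]
and a third application of Lemma \ref{lem-1} together with the Beta integrals $B(\rho,2)=1/(\rho(\rho+1))$ and $B(\rho,\rho'+1)=\Gamma(\rho)\Gamma(\rho'+1)/\Gamma(\rho+\rho'+1)$ produces both the $\delta_0(\chi_2)$-part of the single zero sum over $\rho_{\chi_1}$ and a double zero sum weighted by $\Gamma(\rho_{\chi_1}+1)\Gamma(\rho_{\chi_2}+1)/\Gamma(\rho_{\chi_1}+\rho_{\chi_2}+1)$.

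Summing over $\chi_1,\chi_2$ with weight $\overline{\chi_1(a)\chi_2(b)}/\varphi(q)^2$, character orthogonality collapses the $\delta_0$-factors to reproduce both the main term $x^2/(2\varphi(q)^2)$ and the single zero sum $-\varphi(q)^{-2}\sum_\chi(\overline{\chi(a)}+\overline{\chi(b)})\sum_{\rho_\chi}x^{\rho_\chi+1}/(\rho_\chi(\rho_\chi+1))$ announced in the proposition, provided each truncated sum $|\gamma_\chi|\le T$ is extended to the full sum over $\rho_\chi$. By Lemma \ref{Lem:zero_sum2} the tail extension contributes $\ll x^{1+B_q}(\log qT)/T$, while the three explicit-formula errors from Lemma \ref{lem-1} contribute at most $O(x^2(\log qx)^2/T)$ after summing $\Lambda(m)$ against the error $E(x-m,T,\chi_1)\ll x(\log qx)^2/T$.

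The main obstacle is to bound the residual truncated double zero sum
\[
D := \sum_{\substack{|\gamma_{\chi_1}|\le T\\ |\gamma_{\chi_2}|\le T}}\frac{x^{\rho_{\chi_1}+\rho_{\chi_2}}}{\rho_{\chi_1}\rho_{\chi_2}}\cdot\frac{\Gamma(\rho_{\chi_1}+1)\Gamma(\rho_{\chi_2}+1)}{\Gamma(\rho_{\chi_1}+\rho_{\chi_2}+1)}
\]
to a size compatible with the target exponent $(2+4B_q)/3$. Here I would use the Beta representation $\Gamma(\rho_{\chi_1}+1)\Gamma(\rho_{\chi_2}+1)/\Gamma(\rho_{\chi_1}+\rho_{\chi_2}+1) = \rho_{\chi_1}x^{-\rho_{\chi_1}-\rho_{\chi_2}}\int_0^x u^{\rho_{\chi_1}-1}(x-u)^{\rho_{\chi_2}}du$ to swap summation with $u$-integration, exposing inner single zero sums of the form $\sum_{|\gamma|\le T}y^{\rho}$ that are exactly the objects controlled by the generalized Landau--Gonek formula of Proposition \ref{LandauGonek}. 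Balancing the resulting Landau--Gonek bound for $D$ against the truncation error $O(x^2(\log qx)^2/T)$ at the optimal choice of $T$ (as in Granville's treatment of the unrestricted case) then yields the target error $O(x^{(2+4B_q)/3}(\log qx)^4)$. Carrying out this Landau--Gonek estimate cleanly --- harnessing cancellation among zero pairs with oppositely-signed imaginary parts and the oscillation carried by the Gamma weight --- is the technical heart of the argument; the rest is careful bookkeeping of Beta/Gamma identities and character sums.
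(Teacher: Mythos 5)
Your setup matches the paper's: orthogonality decomposition, repeated insertion of the explicit formula (Lemma~\ref{lem-1}) via Abel summation, and the Beta integral turning the double-integral over zeros into a Gamma-weighted double zero sum. You correctly identify the truncated double zero sum $D$ (the paper's $\Sigma_3$) as the crux of the argument.

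However, your proposed treatment of $D$ diverges from the paper's, and I don't believe it works as stated. You want to revert the Beta identity and write $D$ as a convolution $\int_0^x \bigl(\sum_{|\gamma_{1}|\le T}u^{\rho_1-1}\bigr)\bigl(\sum_{|\gamma_2|\le T}(x-u)^{\rho_2}/\rho_2\bigr)\,du$, then control the factors by Proposition~\ref{LandauGonek}. But notice that only the first inner sum is of the Landau--Gonek type $\sum y^\rho$; the second carries a $1/\rho_2$ weight. Bounding it trivially gives $\ll (x-u)^{B_q}(\log qT)^2$, and applying Landau--Gonek pointwise to the first factor gives (for $u\ge 3$) roughly $\ll (\log qx)^2 + (T\log u)/u$ plus the prime-power spike $\min\{T,u/\langle u\rangle\}(\log u)/u$. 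Integrating the resulting product over $0\le u\le x$ produces a term of size $\gg x^{1+B_q}(\log qx)^4$ from the $u$-independent piece alone. Since $1+B_q \ge (2+4B_q)/3$ for all $B_q\le 1$ (with equality only at $B_q=1$), this already fails to reach the target exponent, before any optimization over $T$. The pointwise application of Landau--Gonek throws away exactly the decay you need: the $T^{1/2}$ saving in the target $T^{1/2}x^{2B_q}$ comes from the rapid decay of the Gamma ratio $\mathcal{Z}(\rho_\chi,\rho_{\chi'})=\Gamma(\rho_\chi)\Gamma(\rho_{\chi'})/\Gamma(1+\rho_\chi+\rho_{\chi'})$ when $|\gamma_\chi|$ and $|\gamma_{\chi'}|$ are both large, and reverting to the integral representation destroys that decay.

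The paper's argument for $\Sigma_3$ is instead a direct case-by-case Stirling estimate of the Gamma ratio, showing $|\mathcal{Z}(\rho_\chi,\rho_{\chi'})|\ll T^{1/2}|\rho_\chi|^{-1}|\rho_{\chi'}|^{-1}$ uniformly for $|\gamma_\chi|,|\gamma_{\chi'}|\le T$ (the case of same-sign imaginary parts is where the exponential factors cancel and the $T^{1/2}$ loss from $|\gamma|^{\beta-1/2}$ appears, the opposite-sign case needing a separate but elementary manipulation). This, combined with Lemma~\ref{Lem:zero_sum}, gives $\Sigma_3\ll x^{2B_q}T^{1/2}(\log qx)^4$, which balances against the $x^2T^{-1}(\log qx)^4$ error at $T=x^{4(1-B_q)/3}$. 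Landau--Gonek is used in the paper, but only for the error-integral term $\Sigma_4 = \varphi(q)^{-1}\sum_\chi\overline{\chi(b)}\sum_{|\gamma_\chi|\le T}\int_0^x u^{\rho_\chi-1}B(x-u,T;q,a)\,du$, where the other factor $B$ is small ($\ll xT^{-1}(\log qx)^2$) so the pointwise Landau--Gonek bound is sufficient; that term is not simply one of the ``three explicit-formula errors $O(x^2(\log qx)^2/T)$'' as you describe, and in fact it is where your $(\log)^2$ should be $(\log)^4$. You would need to adopt the Stirling analysis of $\mathcal{Z}$ to close the gap in $D$.
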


\noindent The proof is divided into three parts.

{\it Step 1} : The first substitution in the explicit formula.

With
\[
\psi(x;q,a)=\sum_{\substack{m\leq x\\m\equiv a\,(\md q)}}\Lambda(m),
\]
we can write
\begin{align}\label{1-1bis}
S(x;q,a,b)=
\sum_{\substack{\ell\leq x\\ \ell\equiv a\,(\md q)}}\Lambda(\ell)\psi(x-\ell;q,b).
\end{align}
Using the orthogonality relation of Dirichlet characters,
we have
\[\label{1-2}
\psi(x;q,b)
=\frac{1}{\varphi(q)}\sum_{\chi\,(\md q)}\overline{\chi(b)}\psi(x,\chi).
\]
We substitute the explicit formula given by Lemma \ref{lem-1} here.
This gives
\begin{equation}\label{1-8}
\psi(u;q,b)
=\frac{1}{\varphi(q)}A(u,T;q,b)+B(u,T;q,b)
\end{equation}
for $x\ge T\ge2$ and $x\ge u\ge0$, where
\[
A(u,T;q,b)=u-\sum_{\chi\,(\md q)}\overline{\chi(b)}
\sum_{\substack{\rho_{\chi} \\ |\gamma_{\chi}|\leq T}}
\frac{u^{\rho_{\chi}}}{\rho_{\chi}}
\]
and $B(u,T;q,b)$ is the error term satisfying
\[
B(u,T;q,b)
\ll
\frac{x}{T}(\log qx)^2+\frac{1}{\varphi(q)}\sum_{\chi\,(\md q)}\delta_1(\chi)q^{1/2}(\log q)^2
\ll\frac{x}{T}(\log qx)^2,
\]
where for estimating the term involving $\delta_1(\chi)q^{1/2}(\log q)^2$,
we use the fact that there is at most one exceptional character $(\md q)$.
Substituting \eqref{1-8} into \eqref{1-1bis}, we obtain
\begin{equation}\label{1-9}
S(x;q,a,b)=\frac{1}{\varphi(q)}\sum_{\substack{\ell\leq x\\\ell\equiv a\,(\md q)}}
\Lambda(\ell)A(x-\ell,T;q,b)
+O\left(\frac{x^2}{T}(\log qx)^2\right).
\end{equation}

{\it Step 2} : The second substitution in the explicit formula.

Now we evaluate the sum on the right-hand side of \eqref{1-9},
we split it into two parts.
\begin{align}\label{2-1}
&\ \frac{1}{\varphi(q)}\sum_{\substack{\ell\leq x \\ \ell\equiv a\,(\md  q)}}
\Lambda(\ell)A(x-\ell,T;q,b)\\
=&\ 
\frac{1}{\varphi(q)}\sum_{\substack{\ell\leq x \\ \ell\equiv a\,(\md  q)}}
\Lambda(\ell)(x-\ell)\notag\\
&\qquad-\frac{1}{\varphi(q)}\sum_{\substack{\ell\leq x \\ \ell\equiv a\,(\md  q)}}
\Lambda(\ell)\sum_{\chi\,(\md  q)}\overline{\chi(b)}
\sum_{\substack{\rho_{\chi} \\ |\gamma_{\chi}|\leq T}}
\frac{(x-\ell)^{\rho_{\chi}}}{\rho_{\chi}}\notag\\
=&\ 
\Sigma_1-\Sigma_2, \text{ say.}\notag
\end{align}

{\em Consider $\Sigma_1$.} We have
\begin{align*}
\Sigma_{1}&=\frac{1}{\varphi(q)}\sum_{\substack{\ell\leq x \\ \ell\equiv a\,(\md  q)}}
\Lambda(\ell)\int_{\ell}^x du
=\frac{1}{\varphi(q)}\int_0^x \psi(u;q,a)du,
\end{align*}
so, using \eqref{1-8}, we obtain
\begin{align}\label{2-3}
\Sigma_1
=\frac{1}{\varphi(q)^2}\int_0^x A(u,T;q,a)du+
O\left(\frac{x^2}{T}(\log qx)^2\right).
\end{align}
Inserting the definition of $A(u,T;q,a)$ yields
\begin{equation}\label{2-4}
\Sigma_1=\frac{1}{\varphi(q)^2}\left(\frac{x^2}{2}-
\sum_{\chi\,(\md  q)}\overline{\chi(a)}
\sum_{\substack{\rho_{\chi} \\ |\gamma_{\chi}|\leq T}}
\frac{x^{\rho_{\chi}+1}}{\rho_{\chi}(\rho_{\chi}+1)}\right)
+O\left(\frac{x^2}{T}(\log qx)^2\right).
\end{equation}

{\it Next consider $\Sigma_2$.}
We have
\begin{align}\label{2-5}
\Sigma_2=\frac{1}{\varphi(q)}\sum_{\chi\,(\md  q)}\overline{\chi(b)}
\sum_{\substack{\rho_{\chi} \\ |\gamma_{\chi}|\leq T}}
\Psi(\rho_{\chi},x;q,a),
\end{align}
where
\[
\Psi(\rho_{\chi},x;q,a)=\frac{1}{\rho_{\chi}}
\sum_{\substack{\ell\leq x \\ \ell\equiv a\,(\md  q)}}
\Lambda(\ell)(x-\ell)^{\rho_{\chi}}.
\]
Again using \eqref{1-8},
\begin{align*}
\Psi(\rho_{\chi},x;q,a)
&=\frac{1}{\rho_{\chi}}
\int_0^x (x-u)^{\rho_{\chi}}d\psi(u;q,a)\\
&=\frac{1}{\rho_{\chi}}
\int_0^x (x-u)^{\rho_{\chi}}\left(\frac{1}{\varphi(q)}dA(u,T;q,a)+dB(u,T;q,a)\right)\\
&=\frac{1}{\varphi(q)\rho_{\chi}}\int_0^x (x-u)^{\rho_{\chi}}du\\
&\qquad-\frac{1}{\varphi(q)\rho_{\chi}}
\sum_{\chi'\,(\md  q)}\overline{\chi'(a)}
\sum_{\substack{\rho_{\chi'} \\ |\gamma_{\chi'}|\leq T}}\int_0^x (x-u)^{\rho_{\chi}}
u^{\rho_{\chi'}-1}du\\
&\qquad\qquad+\frac{1}{\rho_{\chi}}
\int_0^x (x-u)^{\rho_{\chi}}dB(u,T;q,a)\\
&=J_1-J_2+J_3, \text{ say.}
\end{align*}
Obviously
\[
J_1
=\frac{x^{\rho_{\chi}+1}}{\varphi(q)\rho_{\chi}(\rho_{\chi}+1)}.
\]
Since
\begin{align*}
\int_0^x (x-u)^{\rho_{\chi}}
u^{\rho_{\chi'}-1}du
=x^{\rho_{\chi}+\rho_{\chi'}}\frac{\rho_{\chi}\Gamma(\rho_{\chi})\Gamma(\rho_{\chi'})}
{(\rho_{\chi}+\rho_{\chi'})\Gamma(\rho_{\chi}+\rho_{\chi'})},
\end{align*}
we have
\[
J_2=\frac{1}{\varphi(q)}
\sum_{\chi'\,(\md  q)}\overline{\chi'(a)}
\sum_{\substack{\rho_{\chi'} \\ |\gamma_{\chi'}|\leq T}}
\mathcal{Z}(\rho_{\chi},\rho_{\chi'})x^{\rho_{\chi}+\rho_{\chi'}},
\]
where
\begin{equation}
\label{EQ:Z_definition}
\mathcal{Z}(\rho_{\chi},\rho_{\chi'})=
\frac{\Gamma(\rho_{\chi})\Gamma(\rho_{\chi'})}
{(\rho_{\chi}+\rho_{\chi'})\Gamma(\rho_{\chi}+\rho_{\chi'})}
=\frac{\Gamma(\rho_{\chi})\Gamma(\rho_{\chi'})}
{\Gamma(1+\rho_{\chi}+\rho_{\chi'})}.
\end{equation}
Lastly,
\begin{align*}
J_3
&=\frac{1}{\rho_{\chi}}\Bigl[(x-u)^{\rho_{\chi}}B(u,T;q,a)\Bigr]_{u=0}^x
+\int_0^x (x-u)^{\rho_{\chi}-1}B(u,T;q,a)du\\
&=O\left(\frac{1}{|\rho_{\chi}|}\frac{x^2}{T}(\log qx)^2\right)
+\int_0^x (x-u)^{\rho_{\chi}-1}B(u,T;q,a)du.
\end{align*}
Therefore we now obtain
\begin{align}\label{2-6}
\Psi(\rho_{\chi},x;q,a)
&=\frac{x^{\rho_{\chi}+1}}{\varphi(q)\rho_{\chi}(\rho_{\chi}+1)}
-\frac{1}{\varphi(q)}
\sum_{\chi'\,(\md  q)}\overline{\chi'(a)}
\sum_{\substack{\rho_{\chi'} \\ |\gamma_{\chi'}|\leq T}}
\mathcal{Z}(\rho_{\chi},\rho_{\chi'})x^{\rho_{\chi}+\rho_{\chi'}}\notag\\
&\qquad\qquad+\int_0^x u^{\rho_{\chi}-1}B(x-u,T;q,a)du
+O\left(\frac{1}{|\rho_{\chi}|}\frac{x^2}{T}(\log qx)^2\right).\notag
\end{align}
Substituting this into \eqref{2-5} and using Lemma \ref{Lem:zero_sum}, we obtain
\begin{equation}\label{2-8}
\Sigma_2
=\frac{1}{\varphi(q)^2}\sum_{\chi\,(\md  q)}\overline{\chi(b)}
\sum_{\substack{\rho_{\chi} \\ |\gamma_{\chi}|\leq T}}
\frac{x^{\rho_{\chi}+1}}{\rho_{\chi}(\rho_{\chi}+1)}
-\Sigma_3+\Sigma_4
+O\left(\frac{x^2}{T}(\log qx)^4\right).
\end{equation}
where
\begin{align*}
\Sigma_3&=\frac{1}{\varphi(q)^2}\sum_{\chi\,(\md q)}\overline{\chi(b)}
\sum_{\chi'\,(\md q)}\overline{\chi'(a)}
\sum_{\substack{\rho_{\chi} \\ |\gamma_{\chi}|\leq T}}
\sum_{\substack{\rho_{\chi'} \\ |\gamma_{\chi'}|\leq T}}
\mathcal{Z}(\rho_{\chi},\rho_{\chi'})x^{\rho_{\chi}+\rho_{\chi'}},\\
\Sigma_4&=\frac{1}{\varphi(q)}\sum_{\chi\,(\md  q)}\overline{\chi(b)}
\sum_{\substack{\rho_{\chi} \\ |\gamma_{\chi}|\leq T}}
\int_0^x u^{\rho_{\chi}-1}B(x-u,T;q,a)du.
\end{align*}
Combining \eqref{1-9} with \eqref{2-1} yields
\[
S(x;q,a,b)=\Sigma_{1}-\Sigma_{2}+O\left(\frac{x^2}{T}(\log qx)^2\right),
\]
so with \eqref{2-4} and \eqref{2-8} we now arrive at
\begin{multline}
S(x;q,a,b)
=\frac{x^2}{2\varphi(q)^2}
-\frac{1}{\varphi(q)^2}\sum_{\chi\,(\md  q)}(\overline{\chi(a)}+\overline{\chi(b)})
\sum_{\substack{\rho_{\chi} \\ |\gamma_{\chi}|\leq T}}
\frac{x^{\rho_{\chi}+1}}{\rho_{\chi}(\rho_{\chi}+1)}\notag\\
+\Sigma_3-\Sigma_4+O\left(\frac{x^2}{T}(\log qx)^4\right).\notag
\end{multline}
We next extend the sum over zeros.
By Lemma \ref{Lem:zero_sum2}, we have
\begin{equation}\label{zero_extended}
\sum_{\substack{\rho_{\chi} \\ |\gamma_{\chi}|> T}}
\frac{x^{\rho_{\chi}+1}}{\rho_{\chi}(\rho_{\chi}+1)}\
\ll
x^2\sum_{\substack{\rho_{\chi} \\ |\gamma_{\chi}|> T}}\frac{1}{|\gamma_\chi|^2}
\ll
\frac{x^2}{T}(\log qx).
\end{equation}

Therefore we can extend the sum over zeros as

\begin{multline}\label{2-9}
S(x;q,a,b)
=\frac{x^2}{2\varphi(q)^2}
-\frac{1}{\varphi(q)^2}\sum_{\chi\,(\md  q)}(\overline{\chi(a)}+\overline{\chi(b)})
\sum_{\rho_{\chi}}
\frac{x^{\rho_{\chi}+1}}{\rho_{\chi}(\rho_{\chi}+1)}\\
+\Sigma_3-\Sigma_4+O\left(\frac{x^2}{T}(\log qx)^4\right).
\end{multline}

{\it Step 3} : The estimation of $\Sigma_3$ and $\Sigma_4$.

Lastly we estimate the remaining error terms $\Sigma_3$ and $\Sigma_4$.

{\em First consider $\Sigma_4$.}  
 The contribution of the integral on the interval $0\leq u\leq 3$ is
\begin{align*}
&\ll
\frac{1}{\varphi(q)}\frac{x}{T}(\log qx)^2
\sum_{\chi\,(\md q)}
\sum_{\substack{\rho_{\chi} \\ |\gamma_{\chi}|\leq T}}\int_0^3u^{\beta_\chi-1}du\\
&\ll
\frac{1}{\varphi(q)}\frac{x}{T}(\log qx)^2
\sum_{\chi\,(\md q)}
\sum_{\substack{\rho_{\chi} \\ |\gamma_{\chi}|\leq T}}\frac{1}{\beta_\chi}\\
&\ll
\frac{1}{\varphi(q)}\frac{x}{T}(\log qx)^2
\sum_{\chi\,(\md q)}
\sum_{\substack{\rho_{\chi}\neq 1-\beta_1\\ |\gamma_{\chi}|\leq T}}\frac{1}{\beta_\chi}
+
\frac{q^{1/2}}{\varphi(q)}\frac{x}{T}(\log qx)^4\\
&\ll
\frac{1}{\varphi(q)}\frac{x}{T}(\log qx)^2
\sum_{\chi\,(\md q)}
\sum_{\substack{\rho_{\chi}\\ |\gamma_{\chi}|\leq T\\\beta_\chi>1-c_0/\log qT}}\frac{1}{\beta_\chi}
+
\frac{x}{T}(\log qx)^4\\
&\ll
\frac{1}{\varphi(q)}\frac{x}{T}(\log qx)^3
\sum_{\chi\,(\md q)}
\sum_{\substack{\rho_{\chi} \\ |\gamma_{\chi}|\leq T}}1
+
\frac{x}{T}(\log qx)^4
\ll x(\log qx)^4
\end{align*}
by \eqref{EQ:Landau_Page_bound} provided $T\le x$, so we have
\begin{multline}\label{3-1}
\Sigma_4
=\frac{1}{\varphi(q)}\sum_{\chi\,(\md  q)}\overline{\chi(b)}\int_3^x
\Biggl(\sum_{\substack{\rho_{\chi} \\ |\gamma_{\chi}|\leq T}}
 u^{\rho_{\chi}-1}\Biggr)B(x-u,T;q,a)du\\
+O(x(\log qx)^4).
\end{multline}
If $3\leq u\leq x$
and $T\le x$, then Proposition \ref{LandauGonek}, proven in the next section,  yields
\[
\sum_{\substack{\rho_{\chi} \\ |\gamma_{\chi}|\leq T}}
u^{\rho_{\chi}}
\ll u\log(quT)\log\log u+ T\log u
\ll u(\log qx)^2+x(\log qx).
\]
Note that Proposition \ref{LandauGonek} is stated for primitive $\chi$, but the above
estimate is valid for any $\chi$.
Using this estimate
from \eqref{3-1} we obtain
\begin{equation}\label{3-2}
\Sigma_4\ll\frac{x^2}{T}(\log qx)^4
\end{equation}
if $T\le x$. 

{\it Next consider $\Sigma_3$.}
The following argument is inspired by \cite[Corrigendum]{Gra07},
but in our case we have to treat the zeros near the real line more carefully
since $\Gamma(s)$ has a simple pole at $s=0$.
We evaluate $\mathcal{Z}(\rho_{\chi},\rho_{\chi'})$ defined in \eqref{EQ:Z_definition}
for $|\gamma_{\chi}|\leq |\gamma_{\chi'}|$ by using Stirling's formula
\[
\Gamma(s)\ll(|t|+1)^{\sigma-1/2}e^{-(\pi/2)|t|},\quad
s=\sigma+it,\ 0\le\sigma\le 3,\ |t|\ge1.
\]
If $|\gamma_{\chi}|\leq|\gamma_{\chi'}|\leq 1$, then 
$|\Gamma(1+\rho_{\chi}+\rho_{\chi'})|\asymp 1$, and hence
\[
\mathcal{Z}(\rho_{\chi},\rho_{\chi'})
\ll|\rho_\chi|^{-1}|\rho_{\chi'}|^{-1}
\ll T^{1/2}|\rho_\chi|^{-1}|\rho_{\chi'}|^{-1}.
\]
If $|\gamma_{\chi}|\leq 1\leq|\gamma_{\chi'}|$,
then applying Stirling's formula to
$\Gamma(\rho_{\chi'})$ and $\Gamma(1+\rho_{\chi}+\rho_{\chi'})$,
\begin{align*}
\mathcal{Z}(\rho_{\chi},\rho_{\chi'})&\ll
|\rho_\chi|^{-1}\frac{|\gamma_{\chi'}|^{\beta_{\chi'}-1/2}
e^{-(\pi/2)|\gamma_{\chi'}|}}{(|\gamma_{\chi}+\gamma_{\chi'}|+1)
^{\beta_{\chi}+\beta_{\chi'}+1/2}
e^{-(\pi/2)|\gamma_{\chi}+\gamma_{\chi'}|}}\\
&\ll |\rho_\chi|^{-1}|\gamma_{\chi'}|^{-\beta_{\chi}-1}
\ll |\rho_\chi|^{-1}|\rho_{\chi'}|^{-1}\ll T^{1/2}|\rho_\chi|^{-1}|\rho_{\chi'}|^{-1}
\end{align*}
as in the case $|\gamma_{\chi}|\leq|\gamma_{\chi'}|\leq 1$.
If $1\leq |\gamma_{\chi}|\leq |\gamma_{\chi'}|\le T$, we have
\[
\mathcal{Z}(\rho_{\chi},\rho_{\chi'})\ll
\frac{|\gamma_{\chi}|^{\beta_{\chi}-1/2}
e^{-(\pi/2)|\gamma_{\chi}|}|\gamma_{\chi'}|^{\beta_{\chi'}-1/2}
e^{-(\pi/2)|\gamma_{\chi'}|}}
{(|\gamma_{\chi}+\gamma_{\chi'}|+1)^{\beta_{\chi}+\beta_{\chi'}+1/2}
e^{-(\pi/2)|\gamma_{\chi}+\gamma_{\chi'}|}}.
\]
When $\gamma_{\chi}$ and $\gamma_{\chi'}$ have the same sign, then the exponential
factors are cancelled and we obtain
\begin{equation}
\label{estimate-mathcalZ}
\mathcal{Z}(\rho_{\chi},\rho_{\chi'})
\ll
|\gamma_{\chi}|^{\beta_{\chi}-1/2}
|\gamma_{\chi'}|^{-\beta_{\chi}-1}
\ll
|\gamma_{\chi}|^{-1/2}
|\gamma_{\chi'}|^{-1}
\ll
T^{1/2}
|\gamma_{\chi}|^{-1}
|\gamma_{\chi'}|^{-1}
\end{equation}
since $|\gamma_\chi|\le|\gamma_{\chi'}|\le T$.
When they have opposite signs the contribution of the exponential factors is
$O(e^{-\pi|\gamma_{\chi}|})$, and
\begin{align*}
&(|\gamma_{\chi}+\gamma_{\chi'}|+1)^{-(\beta_{\chi}+\beta_{\chi'}+1/2)}\\
{}={}&
(1+|\gamma_{\chi'}|-|\gamma_{\chi}|)^{-(\beta_{\chi}+\beta_{\chi'}+1/2)}\\
{}={}&
(1+|\gamma_{\chi'}|)^{-(\beta_{\chi}+\beta_{\chi'}+1/2)}
\left(1-\frac{|\gamma_{\chi}|}{1+|\gamma_{\chi'}|}\right)^{-(\beta_{\chi}+\beta_{\chi'}+1/2)}\\
{}\le{}&
(1+|\gamma_{\chi'}|)^{-(\beta_{\chi}+\beta_{\chi'}+1/2)}
(1+|\gamma_{\chi}|)^{(\beta_{\chi}+\beta_{\chi'}+1/2)}\\
{}\le{}&
(1+|\gamma_{\chi'}|)^{-(\beta_{\chi}+\beta_{\chi'}+1/2)}
(1+|\gamma_{\chi}|)^{\pi}\\
{}\le{}&
(1+|\gamma_{\chi'}|)^{-(\beta_{\chi}+\beta_{\chi'}+1/2)}e^{\pi|\gamma_{\chi}|},
\end{align*}
we again obtain
$\mathcal{Z}(\rho_{\chi},\rho_{\chi'})\ll T^{1/2}|\gamma_{\chi}|^{-1}|\gamma_{\chi'}|^{-1}$.
Therefore, the estimate
\begin{equation}
\label{estimate-mathcalZ_unified}
\mathcal{Z}(\rho_{\chi},\rho_{\chi'})\ll T^{1/2}|\rho_{\chi}|^{-1}|\rho_{\chi'}|^{-1}
\end{equation}
holds for all $|\gamma_{\chi}|,|\gamma_{\chi'}|\le T$
by symmetry between $\rho_\chi$ and $\rho_{\chi'}$.

By using the estimate \eqref{estimate-mathcalZ_unified}, we have
\[
\sum_{\substack{\rho_{\chi} \\[1.4pt] |\gamma_{\chi}|\leq T}}
\sum_{\substack{\rho_{\chi'} \\ |\gamma_{\chi'}|\leq T}}
\mathcal{Z}(\rho_\chi,\rho_{\chi'})x^{\rho_{\chi}+\rho_{\chi'}}
{}\ll{}
x^{2B_q}T^{1/2}
\sum_{\substack{\rho_{\chi} \\[1.4pt] |\gamma_{\chi}|\leq T}}
\sum_{\substack{\rho_{\chi'} \\ |\gamma_{\chi'}|\leq T}}
|\rho_{\chi}|^{-1}|\rho_{\chi'}|^{-1}.
\]
Therefore, by Lemma \ref{Lem:zero_sum}, we have
\begin{align*}
\Sigma_3
&\ll
x^{2B_q}T^{1/2}
\left(\frac{1}{\varphi(q)}\sum_{\chi\,(\md q)}
\sum_{\substack{\rho_\chi\\|\gamma_\chi|\le T}}\frac{1}{|\rho_\chi|}\right)^2\\
&\ll
x^{2B_q}T^{1/2}\left((\log qx)^2+\frac{q^{1/2}(\log q)^2}{\varphi(q)}\right)^2
\ll
x^{2B_q}T^{1/2}(\log qx)^4
\end{align*}
if $T\le x$.

All the error terms on the right-hand side of \eqref{2-9} have now been estimated.
We choose the optimal $T$ by requiring that
$x^{2B_q}T^{1/2}=x^2/T$, hence $T=x^{4(1-B_q)/3}$.   Since $B_q\geq 1/2$  this choice 
satisfies the condition $T\le x$.    Substituting this choice of $T$ into \eqref{2-9}, 
we obtain the assertion of Proposition \ref{prop1}.

\section{The Landau--Gonek formula for $L$-functions}\label{Landau}
The Landau--Gonek result \cite{Gon93}, originally a formula on the zeros of $\zeta(s)$, 
has been extended to other situations, for example Ford et al.\ \cite{FSZ} worked on a general setting of the Selberg class. We did not find any instance where the uniformity with respect to $q$ was treated and we do so here for the sake of completeness. 

\begin{prop} \label{LandauGonek}
Let $x,T,q>1$ and $\chi$ be a primitive 
character $(\md q)$.
Then
\begin{align*}
\sum_{\substack{\rho_{\chi} \\ |\gamma_{\chi}|\leq T}}x^{\rho_{\chi}}
&=-\frac{T}{\pi}\chi(x)
\Lambda(x)+O(x(\log2qxT)(\log\log3x))\\
&\qquad+O\left((\log x) \min\left\{T,\frac{x}{\langle x\rangle}\right\}\right)
+O\left((\log2qT)\min\left\{T,\frac{1}{\log x}\right\}\right),
\end{align*}
where $\chi(x)=\Lambda(x)=0$ if $x$ is not an integer, and $\langle x\rangle$ denotes the distance
from $x$ to the nearest prime power other than $x$ itself.
\end{prop}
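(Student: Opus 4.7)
The plan is to follow Gonek's classical strategy \cite{Gon93} for $\zeta(s)$, adapted to the primitive $L$-function $L(s,\chi)$ with uniform control over the $q$-dependence. One applies the residue theorem to
$$I := \frac{1}{2\pi i} \oint_R \frac{L'}{L}(s,\chi) x^s \, ds,$$
where $R$ is the positively-oriented rectangle with vertices $c \pm iT$ and $1 - c \pm iT$, with $c = 1 + 1/\log x$. By Lemma~\ref{Lem:zero_counting_local} I perturb $T$ by $O(1)$ so that $|\gamma_\chi - T| \gg (\log 2qT)^{-1}$ for every non-trivial zero; the residue theorem then gives $I = \sum_{|\gamma_\chi|\le T} x^{\rho_\chi}$ (counted with multiplicity), and the task reduces to estimating the four side integrals of $R$.

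On the right edge $\sigma = c$, substitute $\frac{L'}{L}(s,\chi) = -\sum_n \chi(n)\Lambda(n) n^{-s}$ and exchange sum and integral. The term $n = x$ (nonzero only when $x$ is a prime power) produces precisely the main contribution $-\frac{T}{\pi}\chi(x)\Lambda(x)$, while each $n \ne x$ yields $\frac{(x/n)^c \sin(T \log(x/n))}{\pi \log(x/n)}$, bounded by $\min\{T(x/n)^c,\ (x/n)^c/|\log(x/n)|\}$. Splitting the remaining sum according to the position of $n$ relative to $x$ --- (i) the single nearest prime power, yielding $O((\log x)\min\{T, x/\langle x\rangle\})$; (ii) the other $n$'s, handled dyadically in $|n-x|$ and producing the main error $O(x(\log 2qxT)\log\log 3x)$ via the harmonic truncation $\sum_{k\le x} k^{-1}$ --- furnishes two of the three announced errors. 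The left edge $\sigma = 1 - c$ is treated via the functional equation for primitive $\chi$,
$$\frac{L'}{L}(s,\chi) = -\log\frac{q}{\pi} - \tfrac12\tfrac{\Gamma'}{\Gamma}\!\left(\tfrac{s+a}{2}\right) - \tfrac12\tfrac{\Gamma'}{\Gamma}\!\left(\tfrac{1-s+a}{2}\right) - \frac{L'}{L}(1-s,\overline\chi),$$
with $a \in \{0,1\}$ according to $\chi(-1) = \pm 1$. After the substitution $w = 1 - s$, the $-\frac{L'}{L}(1-s, \overline\chi)$ piece becomes a right-edge integral of $\frac{L'}{L}(w, \overline\chi) x^{1-w}$, dominated by small $n$ and contributing the $(\log 2qT)\min\{T, 1/\log x\}$ error term; the digamma and $\log(q/\pi)$ pieces, after Stirling and an integration by parts in $t$ to eliminate the naive $T$-factor coming from the integration length, are absorbed.

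The horizontal edges $t = \pm T$ are the most delicate. A direct bound via the partial-fraction expansion of $\frac{L'}{L}(\cdot,\chi)$ yields $\frac{L'}{L}(\sigma\pm iT, \chi) \ll (\log 2qT)^2$, but this produces a horizontal contribution $O(x (\log 2qT)^2)$ which would exceed the announced error. Following Gonek, one remedies this by averaging over $T$ in a short interval of length $\asymp 1$ (equivalently, introducing small indentations around the ordinates of nearby zeros), recovering the sharper bound $\frac{L'}{L}(\sigma \pm iT, \chi) \ll \log 2qxT$ on the adjusted horizontal lines and a horizontal contribution $O(x \log 2qxT)$ safely absorbed in the main error. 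The principal obstacle in the whole argument is the Perron bookkeeping on the right edge --- extracting the precise $\log\log 3x$ factor while tracking $q$-uniformity through the dyadic sum --- together with the averaging trick on the horizontal edges; the integration-by-parts argument for the digamma pieces on the left edge is also technical but follows Gonek's template once the correct contour has been fixed.
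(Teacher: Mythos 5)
Your proposal follows the same global strategy as the paper (Gonek's contour, right edge via the Dirichlet series, left edge via the functional equation, horizontal edges as the delicate piece), so the skeleton is right. But your treatment of the horizontal edges contains a genuine gap, and as a result you have also misattributed where the $q$-uniformity and the $\log\log 3x$ factor actually come from.

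The problem is the claimed remedy on $t = \pm T$. You assert that averaging $T$ over an interval of length $\asymp 1$ (or, equivalently, indenting around nearby zero ordinates) recovers a pointwise bound $\frac{L'}{L}(\sigma \pm iT,\chi) \ll \log 2qxT$. This is not attainable: with $\asymp \log 2qT$ zeros in any interval of unit length, the best you can guarantee by moving $T$ is a gap $\gg (\log 2qT)^{-1}$ to the nearest ordinate, and feeding that into the partial-fraction formula still only gives $\frac{L'}{L}(\sigma\pm iT,\chi) \ll (\log 2qT)^2$. One cannot improve the pointwise bound to $\log 2qxT$. What the paper actually does is different: after applying the partial-fraction formula it keeps the zero sum and, for each $\rho_\chi$ with $|\gamma_\chi - T| \le 1$, deforms the horizontal segment up (or down) to height $T+1$ (or $T-1$) to avoid the zero entirely, then estimates the resulting vertical integral
\[
\int_T^{T\pm1}\min\Bigl(\log 3x,\ \tfrac{1}{|t-\gamma_\chi|}\Bigr)\,dt \ll \log\log 3x.
\]
This is an integral bound, not a pointwise bound, and it is precisely this estimate that produces the $\log\log 3x$ on the horizontal edges. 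Multiplying by the $O(\log 2qT)$ nearby zeros gives $I_2, I_4 \ll x(\log 2qxT)(\log\log 3x)$, which is where the $q$-dependence enters the main error term. Your right-edge analysis (Gonek's Lemma~2) has no $q$-dependence at all: it yields $O(x(\log 2x)(\log\log 3x)) + O((\log x)\min\{T, x/\langle x\rangle\})$, so writing the right-edge error as $O(x(\log 2qxT)\log\log 3x)$ introduces a spurious $q$ there while simultaneously underestimating the horizontal edges. Similarly, on the left edge the $(\log 2qT)\min\{T, 1/\log x\}$ error comes from the $\log qt$ term (after integration by parts in $t$), not from the $\frac{L'}{L}(1-s,\overline\chi)$ piece, which instead contributes $O(\log 3x)$ through the Dirichlet-series tail. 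Repairing the horizontal-edge estimate as above, and redistributing the error terms to their correct sources, brings your argument in line with the paper's.
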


\begin{proof}
The proof essentially follows the original one of Gonek \cite{Gon93}.
Let $c=1+1/\log3x$, and consider the integral 
\begin{align}\label{4-1}
I
&=\frac{1}{2\pi i}
\left(\int_{c-iT}^{c+iT}+\int_{c+iT}^{1-c+iT}+\int_{1-c+iT}^{1-c-iT}+\int_{1-c-iT}^{c-iT}
\right)\frac{L'}{L}(s,\chi)x^s ds\\
&=I_1+I_2+I_3+I_4,\notag
\end{align}
say.
First suppose that the horizontal paths do not cross any zero of $L(s,\chi)$.
The poles inside the rectangle are the non-trivial zeros of $L(s,\chi)$
and at most one  trivial zero of $L(s,\chi)$ at $s=0$.
Hence the residue theorem gives
\begin{align}\label{4-2}
I=\sum_{\substack{\rho_{\chi} \\ |\gamma_{\chi}|\leq T}}
x^{\rho_{\chi}}+O(1).
\end{align}

We evaluate $I_1, I_2, I_3$ and $I_4$.
First consider $I_2$. We use the well-known formula
\begin{align}\label{4-3}
\frac{L'}{L}(s,\chi)=\sum_{\substack{\rho_{\chi} \\ |\gamma_{\chi}-t|\leq 1}}
\frac{1}{s-\rho_{\chi}}
+O(\log q(|t|+2))
\end{align}
uniformly in $-1\leq\sigma\leq 2$ (\cite[Lemma 12.6]{MV}).
We have
\begin{align}\label{4-4}
I_2=\sum_{\substack{\rho_{\chi} \\ |\gamma_{\chi}-T|\leq 1}}\int_{c+iT}^{1-c+iT}
\frac{x^s}{s-\rho_{\chi}}ds+O\left(\log2qT\int_{1-c}^{c}x^{\sigma}d\sigma\right),
\end{align} 
whose error term is
\begin{align}\label{4-5}
\ll x^c\log2qT\ll x\log2qT.
\end{align}
For each integral on the first sum of \eqref{4-4}, we first observe that
$T-1\leq\gamma_{\chi}\leq T+1$.
When $T-1\leq\gamma_{\chi}\leq T$, we deform the path of integration as
\[
\int_{c+iT}^{1-c+iT}=\int_{c+iT}^{c+i(T+1)}+\int_{c+i(T+1)}^{1-c+i(T+1)}
+\int_{1-c+i(T+1)}^{1-c+iT}.
\]
Noting that the denominator on the second term is $\gg 1$ and that $\log3x\gg 1$,
$\log\log3x\gg 1$,
we obtain in this case
\begin{align*}
\int_{c+iT}^{1-c+iT} \frac{x^{s}}{s-\rho_{\chi}}ds
&\ll x^c\int_T^{T+1}\frac{dt}{|(c-\beta_{\chi})+i(t-\gamma_{\chi})|}
+\int_{1-c}^c x^{\sigma}d\sigma +\frac{x^{1-c}}{\beta_{\chi}-(1-c)}\\
&\ll x\int_{\gamma_{\chi}}^{\gamma_{\chi}+2}\min\left\{\log3x,\frac{1}{t-\gamma_{\chi}}
\right\}dt +x+\log3x\\
&\ll x\left(\int_{\gamma_{\chi}}^{\gamma_{\chi}+1/\log3x}\log3x \,dt
+\int_{\gamma_{\chi}+1/\log3x}^{\gamma_{\chi}+2}\frac{dt}{t-\gamma_{\chi}}\right)
+x\\
&\ll x\left(\frac{1}{\log3x}\cdot\log3x
+\Bigl[\log(t-\gamma_{\chi})\Bigr]_{t=\gamma_{\chi}+1/\log3x}^{\gamma_{\chi}+2}\right)+x
\\ &\ll x\log\log3x.
\end{align*}
If $T<\gamma_{\chi}\leq T+1$, we deform the path to that including the segment with the
imaginary part $T-1$, and argue similarly.   
We can conclude that
\[
I_2\ll
x\log\log3x\sum_{\substack{\rho_{\chi} \\ |\gamma_{\chi}-T|\leq 1}}1
+x\log2qT
\]
and hence, by using Lemma \ref{Lem:zero_counting_local},
\begin{align}\label{4-6}
I_2\ll x(\log2qxT)(\log\log3x).
\end{align}
The estimate of $I_4$ is similar.

Next consider $I_3$. We first quote
\begin{align}\label{4-7}
\frac{L'}{L}(s,\chi)=-\frac{L'}{L}(1-s,\overline{\chi})-\log\frac{q}{2\pi}
-\frac{\Gamma'}{\Gamma}(1-s)+\frac{\pi}{2}\cot\frac{\pi}{2}(s+\kappa)
\end{align}
(\cite[(10.35)]{MV}), where $\kappa=0$ or 1 depending on whether $\chi$ is an even or an odd character,
respectively.   We see easily that
\[
\frac{\pi}{2}\cot\frac{\pi}{2}(s+\kappa)=\pm i+O(e^{-\pi |t|})
\]
for $|t|\ge1$ and $\Re s=1-c$.
Therefore, putting $s=1-c+it$ and applying Stirling's formula we have
\begin{align}\label{4-8}
\frac{L'}{L}(1-c+it,\chi)=-\frac{L'}{L}(c-it,\overline{\chi})-\log qt+C+O(t^{-1})
\end{align}
for $t\geq 1$, where $C$ denotes a constant.
Hence, the part $[1-c\pm i,1-c\pm iT]$ of the integral $I_3$ is
\[
=\frac{\pm1}{2\pi}
\int_1^T \left(\frac{L'}{L}(c\mp it,\overline{\chi})+\log qt-C\right)x^{1-c\pm it}dt
+O\left(\int_1^T\frac{x^{1-c}}{t}dt\right),
\]
whose error term is $O(\log T)$. The integral is
\begin{align*}
= \mp\frac{x^{1-c}}{2\pi}\sum_{n=2}^{\infty}\frac{\overline{\chi(n)}\Lambda(n)}{n^c}\int_1^T(nx)^{\pm it}dt
\pm\frac{x^{1-c}}{2\pi}\int_1^T(\log qt -C)x^{\pm it}dt,
\end{align*}
whose first part is
\[
\ll x^{1-c}\sum_{n=2}^{\infty}\frac{\Lambda(n)}{n^c \log nx}
\ll \sum_{n=2}^{\infty}\frac{\Lambda(n)}{n^c}
=-\frac{\zeta'}{\zeta}(c)\ll\frac{1}{c-1}\ll\log3x.
\]
The second part is trivially $O(T\log2qT)$, while integration by parts gives
\[
=\frac{x^{1-c}}{2\pi}\left(\left[(\log qt-C)\frac{x^{\pm it}}{i\log x}\right]_{t=1}^T
-\int_1^T\frac{x^{it}}{it \log x}dt\right)
\ll\frac{\log2qT}{\log x}.
\]
(Note that $\log x\gg1$ does not hold.)
The part $[1-c-i,1-c+i]$ of the integral $I_3$
is $\ll\log2qx$ by \eqref{4-7}.
Therefore we conclude
\begin{equation}
\label{4-9}
\begin{aligned}
I_3
&\ll\log2qx+(\log2qT)\min\left\{T,\frac{1}{\log x}\right\}+\log T\\
&\ll x(\log2qT)(\log\log3x)+(\log2qT)\min\left\{T,\frac{1}{\log x}\right\}
\end{aligned}
\end{equation}
since $T>1$.

We next consider $I_1$. Substituting the Dirichlet series expansion, we have
\begin{align}\label{4-10}
I_1
&=-\sum_{n=2}^{\infty}\chi(n)\Lambda(n)\frac{1}{2\pi}\int_{-T}^T (x/n)^{c+it}dt\\
&=-\frac{T}{\pi}\chi(x)\Lambda(x)
+O\left(\sum_{n\neq x}\Lambda(n)(x/n)^c \min\left\{T,\frac{1}{|\log x/n|}\right\}\right).\notag
\end{align}
The error term here can be estimated by \cite[Lemma 2]{Gon93}, and so
\begin{align}\label{4-11}
I_1=-\frac{T}{\pi}\chi(x)\Lambda(x)+O(x(\log2x)(\log\log3x))
+O\left((\log x) \min\left\{T,\frac{x}{\langle x\rangle}\right\}\right).
\end{align} 
The formula of the lemma follows by combining \eqref{4-6}, \eqref{4-9} and \eqref{4-11}.

Finally if the path of $I_2$ or $I_4$ crosses some zero we choose $T'$ slightly larger than
$T$, and define $I'$, similar to $I$ but now $T$ is replaced by $T'$.    Then instead of
\eqref{4-2} we obtain
\[
I'=\sum_{\substack{\rho_{\chi} \\ |\gamma_{\chi}|\leq T}}
x^{\rho_{\chi}}+O(x\log2qT)
\]
(by Lemma \ref{Lem:zero_counting_local}), while the evaluation of integrals on the edges of the rectangle
can be done in the same way as for $I$, so the assertion of the lemma is also valid in this case.
\end{proof}

\section{Lemmas for the proof of Theorem \ref{Asymptotic_B_SP}}\label{Lemma_B_SP}
In this section we present some preparatory material  for the improvement of the error term of Proposition \ref{prop1}.
We start with a lemma on an integral of the Selberg-type. In order to prove this first lemma,
we need to calculate the following sum over the non-trivial zeros of $L(s,\chi)$.

\begin{lem}
\label{Lem:offdiagonal_zero}
For any $x\ge2$ and real number $y$, we have
\[
\sum_{\substack{\rho_\chi\\|\gamma_\chi|\le x}}\frac{1}{1+|\gamma_\chi-y|}\ll(\log qx)^2.
\]
\end{lem}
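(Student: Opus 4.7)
The plan is to decompose the zero sum according to the integer part $m := \lfloor \gamma_\chi - y \rfloor$, so that the weight $1/(1+|\gamma_\chi-y|)$ is comparable to $1/(1+|m|)$ on each slice, reducing the problem to counting zeros in short intervals. Concretely, I would begin with
\[
\sum_{\substack{\rho_\chi\\|\gamma_\chi|\le x}}\frac{1}{1+|\gamma_\chi-y|}
\ll \sum_{m\in\mathbb{Z}}\frac{1}{1+|m|}\cdot\#\{\rho_\chi:|\gamma_\chi|\le x,\ \gamma_\chi\in[y+m,y+m+1]\}.
\]
The inner count is bounded via Lemma \ref{Lem:zero_counting_local}: the interval $[y+m,y+m+1]$ either lies in a set of the form $\{|\gamma_\chi|\in[T,T+1]\}$ for $T=|y+m|$, or (if it crosses the origin) splits into at most two such sets, so the count is $\ll \log q(|y+m|+2)$.

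Next, I would restrict the summation range by noting that the condition $|\gamma_\chi|\le x$ forces $|y+m|\le x+1$; hence $\log q(|y+m|+2)\ll \log(qx)$, and the problem reduces to showing
\[
\sum_{\substack{m\in\mathbb{Z}\\|y+m|\le x+1}}\frac{1}{1+|m|}\ll \log(qx).
\]
I would split this into two cases according to the size of $|y|$. If $|y|\le 2x$, then the admissible $m$ satisfy $|m|\le 3x+1$, and the harmonic tail is $\ll\log x$. If $|y|>2x$, then $|y+m|\le x+1$ forces $|m|\ge|y|/2$, so $1/(1+|m|)\ll 1/|y|$, and since at most $2x+3$ integers $m$ satisfy the constraint, the inner sum is $\ll x/|y|\ll 1$. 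Multiplying by the earlier $\log(qx)$ factor yields the claimed bound $(\log qx)^2$.

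The subtle point -- rather than a genuine obstacle -- is the uniformity in $y\in\mathbb{R}$: naively, one might worry that the counting bound $\log q(|y+m|+2)$ grows with $|y|$, but the constraint $|\gamma_\chi|\le x$ caps $|y+m|$ at $x+1$, and simultaneously when $|y|$ is large compared to $x$, the weights $1/(1+|m|)$ are forced to be small because $|m|\gtrsim|y|$. These two effects balance exactly to give the $y$-independent bound. No analytic input beyond the local zero density Lemma \ref{Lem:zero_counting_local} is needed.
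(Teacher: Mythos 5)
Your argument is correct and follows essentially the same route as the paper's: both dissect the zeros into unit intervals centered around $y$, bound the count in each interval via Lemma~\ref{Lem:zero_counting_local}, and then sum the resulting harmonic-type series, handling the case $|y|>2x$ separately (the paper dispatches it up front by noting each term is $\ll x^{-1}$). The only cosmetic difference is that you index the unit intervals by $m=\lfloor\gamma_\chi-y\rfloor$ rather than by $n<|\gamma_\chi-y|\le n+1$, which has no mathematical effect.
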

\begin{proof}
If $|y|>2x$, then each term above is $\ll x^{-1}$ and the lemma holds trivially. So we consider the case $|y|\le 2x$.
Then by the triangle inequality, we have
\[|\gamma_\chi-y|\le x+y\le3x.\]
Thus we can extend the sum and dissect it as
\begin{align*}
\sum_{\substack{\rho_\chi\\|\gamma_\chi|\le x}}\frac{1}{1+|\gamma_\chi-y|}
&\le
\sum_{\substack{\rho_\chi\\|\gamma_\chi-y|\le 3x}}\frac{1}{1+|\gamma_\chi-y|}\\
&\le
\sum_{\substack{\rho_\chi\\|\gamma_\chi-y|\le 1}}\frac{1}{1+|\gamma_\chi-y|}
+
\sum_{1\le n\le 3x}\sum_{\substack{\rho_\chi\\n<|\gamma_\chi-y|\le n+1}}\frac{1}{1+|\gamma_\chi-y|}\\
&\le
\sum_{\substack{\rho_\chi\\|\gamma_\chi-y|\le 1}}1+\sum_{1\le n\le 3x}\frac{1}{n}\sum_{\substack{\rho_\chi\\n<|\gamma_\chi-y|\le n+1}}1.
\end{align*}
By using Lemma \ref{Lem:zero_counting_local}, we can estimate the last sum to be
\[
\ll
(\log qx)\left(1+\sum_{1\le n\le 3x}\frac{1}{n}\right)
\ll
(\log qx)(\log x)
\ll
(\log qx)^2.
\]

\end{proof}

We now obtain an estimate for an integral of the Selberg-type.

\begin{lem}
\label{Selberg}
For any $2\le h\le x$ and any $\chi\ (\md q)$, we have
\[
\int_{x}^{2x}\Big|\sum_{t< n\le t+h}\chi(n)\Lambda(n)-\delta_0(\chi)h\Big|^2dt
\ll
hx^{2B_q^\ast}(\log qx)^4.
\]
\end{lem}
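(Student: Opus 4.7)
The plan is to reduce the short-sum moment to a mean square over non-trivial zeros of $L(s,\chi^\ast)$ via the explicit formula. Writing
\[
\sum_{t<n\le t+h}\chi(n)\Lambda(n)-\delta_0(\chi)h
=\psi(t+h,\chi)-\psi(t,\chi)-\delta_0(\chi)h,
\]
I would apply Lemma \ref{lem-1} at both $t+h$ and $t$ with a common truncation parameter $T$, chosen as a sufficiently large power of $x$ (e.g.\ $T=x^{2}$) so that the pointwise remainder, when squared and integrated over $[x,2x]$, is strictly smaller than $hx^{2B_q^\ast}(\log qx)^4$. The exceptional-character error $\delta_1(\chi)q^{1/2}(\log q)^2$ is absorbed in the same way. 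The resulting identity rewrites the integrand (up to admissible error) as the zero-sum $-\sum_{|\gamma_\chi|\le T}\rho_\chi^{-1}\bigl((t+h)^{\rho_\chi}-t^{\rho_\chi}\bigr)$.

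The next step is to expand the square and integrate. For the diagonal $\rho_\chi=\rho_{\chi'}$ I would use the pointwise estimate
\[
|(t+h)^{\rho_\chi}-t^{\rho_\chi}|
\ll\min\bigl(h|\rho_\chi|t^{\beta_\chi-1},\ 2\,t^{\beta_\chi}\bigr),
\]
splitting the zero-sum at $|\gamma_\chi|\asymp x/h$: the low-frequency range contributes $\ll h^{2}x^{2B_q^\ast-1}$ times the number of such zeros, while the high-frequency range contributes $\ll x^{2B_q^\ast+1}\sum_{|\gamma_\chi|>x/h}|\gamma_\chi|^{-2}$. Lemmas \ref{Lem:zero_counting_local} and \ref{Lem:zero_sum2} then combine to give $\ll hx^{2B_q^\ast}(\log qx)^{2}$. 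For the off-diagonal cross terms I would evaluate the oscillatory integral
\[
\int_x^{2x} t^{\beta_\chi+\beta_{\chi'}-2}t^{i(\gamma_\chi-\gamma_{\chi'})}\,dt
\ll x^{2B_q^\ast-1}\min\!\Bigl(x,\frac{1}{1+|\gamma_\chi-\gamma_{\chi'}|}\Bigr),
\]
and then invoke Lemma \ref{Lem:offdiagonal_zero} to perform the sum over $\rho_{\chi'}$ with a saving of $(\log qx)^{2}$, followed by Lemma \ref{Lem:zero_sum2} for the sum over $\rho_\chi$, delivering the $(\log qx)^{4}$ factor.

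The replacement of $B_q$ by $B_q^\ast=\min(B_q,1-\eta)$ is what isolates the possible Landau--Siegel zero: any zero with real part exceeding $1-\eta$ must be the unique exceptional real zero, by the Vinogradov--Korobov zero-free region (giving the $(\log x)^{4/5}$ regime) together with Siegel's theorem $\beta_1\le 1-c_1(\varepsilon)q^{-\varepsilon}$ (giving the $q^\varepsilon$ regime). I would extract this exceptional zero from the zero-sum before integrating, bound its contribution directly by $x^{\beta_1}\le x^{1-\eta}$, and then apply the argument above with $\beta_\chi\le B_q^\ast$ for the remaining zeros.

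I expect the main obstacle to be the off-diagonal estimation, specifically bookkeeping the interplay between the bound $|(t+h)^{\rho_\chi}-t^{\rho_\chi}|\ll\min(h|\rho_\chi|t^{\beta_\chi-1},t^{\beta_\chi})$ and the oscillatory saving $1/(1+|\gamma_\chi-\gamma_{\chi'}|)$, so that the final double zero-sum loses at most $(\log qx)^{4}$. A secondary difficulty is ensuring the Siegel-zero contribution is genuinely absorbed into $x^{2B_q^\ast}$ rather than contaminating the estimate by the $q^{1/2}(\log q)^2$ factor from Lemma \ref{Lem:zero_sum}; this forces the separate treatment described above.
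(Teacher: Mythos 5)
Your plan is essentially the paper's route---explicit formula, split at $|\gamma_\chi|\asymp x/h$, off-diagonal oscillation controlled by Lemma~\ref{Lem:offdiagonal_zero}, and Vinogradov--Korobov plus Siegel to replace $\beta_\chi$ by $B_q^\ast$---but there is a genuine gap at the very first step. You apply Lemma~\ref{lem-1} and assert that the exceptional-character error $\delta_1(\chi)q^{1/2}(\log q)^2$ ``is absorbed in the same way.'' It is not. That term is independent of the truncation parameter $T$ (so taking $T=x^2$ does nothing for it, and in any case Lemma~\ref{lem-1} requires $T\le x$), and after squaring and integrating over $[x,2x]$ it contributes $\asymp xq(\log q)^4$, which dominates $hx^{2B_q^\ast}(\log qx)^4$ once $q$ is not small. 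The paper instead applies Lemma~\ref{Lem:explicit} at $u=t+h$ and $u=t$ with $T=x$ and subtracts: the $u$-independent constant $C(\chi^\ast)$, which is the entire source of the $q^{1/2}$ blow-up when $\chi$ is exceptional, cancels in the difference, leaving only the clean $(\log qx)^2$ remainder. Without observing this cancellation the estimate does not close uniformly in $q$, and it is precisely the concern you flag at the end (``Siegel-zero contribution\ldots contaminating the estimate by the $q^{1/2}(\log q)^2$ factor''), so you were sensing the right problem in the wrong place.

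A second, structural issue: your off-diagonal analysis is written around a single oscillatory integral $\int_x^{2x}t^{\beta_\chi+\beta_{\chi'}-2}t^{i(\gamma_\chi-\gamma_{\chi'})}\,dt$, which is the form one gets from the low-frequency representation $(t+h)^{\rho}-t^{\rho}=\rho\int_t^{t+h}u^{\rho-1}\,du$ after Cauchy--Schwarz in $u$; but the high-frequency zeros, for which you must retain $t^{\rho}/\rho$ rather than the short integral, lead to the exponent $\beta_\chi+\beta_{\chi'}$ with a $|\rho_\chi\rho_{\chi'}|^{-1}$ factor instead. If you expand the square before splitting, you also need to control low--high cross terms, whose form is mixed and does not fit your single display. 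The paper avoids all of this by splitting the zero sum into $\psi_1$ (with $|\gamma_\chi|\le x/h$) and $\psi_2$ (with $|\gamma_\chi|>x/h$) \emph{before} squaring, then using $|\psi_1+\psi_2|^2\le 2|\psi_1|^2+2|\psi_2|^2$; each of $|\psi_1|^2,|\psi_2|^2$ is then homogeneous and the off-diagonal bookkeeping stays clean. Also note that bounding $|(t+h)^{\rho}-t^{\rho}|$ by the $\min$ is only legitimate on the diagonal; any off-diagonal saving requires keeping the phase, i.e.\ working with the unabsolute-valued expression. Finally, extracting the Siegel zero separately is harmless but unnecessary: once $C(\chi^\ast)$ has cancelled, the only zero-lemmas you need here are \ref{Lem:zero_counting_local}, \ref{Lem:zero_sum2} (first estimate) and \ref{Lem:offdiagonal_zero}, none of which carry the $q^{1/2}$ factor, and the bound $\beta_\chi\le B_q^\ast$ absorbs the exceptional zero along with all the others.
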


\begin{proof}
By taking the difference between $u=t+h$ and $u=t$ in Lemma \ref{Lem:explicit}, we have
\[
\sum_{t< n\le t+h}\chi(n)\Lambda(n)
=
\delta_0(\chi)h-\sum_{\substack{\rho_\chi\\|\gamma_\chi|\le T}}\frac{(t+h)^{\rho_\chi}-t^{\rho_\chi}}{\rho_\chi}
+E(t+h,T,\chi)-E(t,T,\chi),
\]
where the term $C(\chi^\ast)$ is cancelled out since it is independent of the main variable $u$ in Lemma \ref{Lem:explicit}.
We take the parameter $T=x$.
Then the error term is estimated by
\[
E(t+h,T,\chi)-E(t,T,\chi)\ll(\log 2q)(\log x)+\frac{x}{T}(\log qxT)^2\ll(\log qx)^2.
\]
Therefore, we obtain
\begin{align*}
&\sum_{t< n\le t+h}\chi(n)\Lambda(n)-\delta_0(\chi)h\\
{}={}&
-\sum_{\substack{\rho_\chi\\|\gamma_\chi|\le x}}\frac{(t+h)^{\rho_\chi}-t^{\rho_\chi}}{\rho_\chi}+O((\log qx)^2)\\
{}={}&
-\sum_{\substack{\rho_\chi\\|\gamma_\chi|\le x/h}}\int_{t}^{t+h}u^{\rho_\chi-1}du
-\sum_{\substack{\rho_\chi\\x/h<|\gamma_\chi|\le x}}\frac{(t+h)^{\rho_\chi}-t^{\rho_\chi}}{\rho_\chi}+O((\log qx)^2)\\
{}={}&
-\int_{t}^{t+h}\Biggl(\sum_{\substack{\rho_\chi\\|\gamma_\chi|\le x/h}}
u^{\rho_\chi-1}\Biggr)du
-\sum_{\substack{\rho_\chi\\x/h<|\gamma_\chi|\le x}}
\frac{(t+h)^{\rho_\chi}-t^{\rho_\chi}}{\rho_\chi}
+O((\log qx)^2)\\
{}={}&
-\psi_{1}(t)
-\psi_{2}(t)
+O\left((\log qx)^2\right),\text{ say.}
\end{align*}
Substituting this explicit formula into the integral of the assertion, we have
\begin{gather*}
\int_{x}^{2x}
\Big|\sum_{t< n\le t+h}\chi(n)\Lambda(n)-\delta_0(\chi)h\Big|^2dt\\
\ll
\int_{x}^{2x}|\psi_1(t)|^2dt
+
\int_{x}^{2x}|\psi_2(t)|^2dt
+
x(\log qx)^4.
\end{gather*}
Hence it suffices to prove the two estimates
\[
\int_{x}^{2x}|\psi_1(t)|^2dt,\quad \int_{x}^{2x}|\psi_2(t)|^2dt
\ll
hx^{2B_q^\ast}(\log qx)^4,
\]
since $B_q^\ast\ge1/2$ implies $x(\log qx)^4\le hx^{2B_q^\ast}(\log qx)^4$.

Applying the Cauchy--Schwarz inequality to the first integral we have
\[
|\psi_1(t)|^2
\ll
h\int_{t}^{t+h}
\Big|\sum_{\substack{\rho_\chi\\|\gamma_\chi|\le x/h}}u^{\rho_\chi-1}\Big|^2du
\]
so that
\begin{align*}
\int_{x}^{2x}|\psi_1(t)|^2dt
&\ll
h\int_{x}^{2x}\int_{t}^{t+h}
\Big|\sum_{\substack{\rho_\chi\\|\gamma_\chi|\le x/h}}u^{\rho_\chi-1}\Big|^2dudt\\
&=
h\int_{x}^{2x+h}\Big|\sum_{\substack{\rho_\chi\\|\gamma_\chi|\le x/h}}u^{\rho_\chi-1}\Big|^2\left(\int_{\max(u-h,x)}^{\min(u,2x)}dt\right)du\\
&\ll
h^2\int_x^{3x}
\Big|\sum_{\substack{\rho_\chi\\|\gamma_\chi|\le x/h}}u^{\rho_\chi-1}\Big|^2du
\ll
h^2x^{-2}\int_x^{3x}
\Big|\sum_{\substack{\rho_\chi\\|\gamma_\chi|\le x/h}}u^{\rho_\chi}\Big|^2du
\end{align*}
since $h\le x$.
We now expand the square and integrate over $u$.
This gives
\[
\int_{x}^{2x}|\psi_1(t)|^2dt
\ll
h^2x^{-2}
\dsum_{\substack{\rho_\chi,\rho_\chi'\\|\gamma_\chi|,|\gamma'_\chi|\le x/h}}
\frac{x^{\beta_\chi+\beta'_\chi+1}}{1+|\gamma_\chi-\gamma'_\chi|}.
\]
Using the Vinogradov-Korobov zero-free region
(For a detailed proof of a weaker result, see \cite[Theorem 1]{Tatuzawa}.) and Siegel's theorem,
we can estimate $\beta_\chi$ and $\beta_{\chi'}$ by $B_q^\ast$.
Therefore,
\begin{align*}
\int_{x}^{2x}|\psi_1(t)|^2dt
&\ll
h^2x^{2B_q^\ast-1}
\sum_{\substack{\rho_\chi\\|\gamma_\chi|\le x/h}}
\sum_{\substack{\rho_\chi'\\|\gamma_\chi'|\le x}}
\frac{1}{1+|\gamma_\chi-\gamma'_\chi|}\\
&\ll
h^2x^{2B_q^\ast-1}(\log qx)^2\sum_{\substack{\rho_\chi\\|\gamma_\chi|\le x/h}}1
\ll
hx^{2B_q^\ast}(\log qx)^3
\end{align*}
by Lemma \ref{Lem:offdiagonal_zero}.

For the latter integral, we ignore the difference as
\begin{align*}
\int_{x}^{2x}|\psi_2(t)|^2dt
&\ll
\int_{x}^{2x}
\Big|\sum_{\substack{\rho_\chi\\x/h<|\gamma_\chi|\le x}}\frac{(t+h)^{\rho_\chi}}{\rho_\chi}\Big|^2dt
+
\int_{x}^{2x}
\Big|\sum_{\substack{\rho_\chi\\x/h<|\gamma_\chi|\le x}}\frac{t^{\rho_\chi}}{\rho_\chi}\Big|^2dt\\
&=
\int_{x+h}^{2x+h}
\Big|\sum_{\substack{\rho_\chi\\x/h<|\gamma_\chi|\le x}}\frac{t^{\rho_\chi}}{\rho_\chi}\Big|^2dt
+
\int_{x}^{2x}
\Big|\sum_{\substack{\rho_\chi\\x/h<|\gamma_\chi|\le x}}\frac{t^{\rho_\chi}}{\rho_\chi}\Big|^2dt\\
&\ll
\int_{x}^{3x}
\Big|\sum_{\substack{\rho_\chi\\x/h<|\gamma_\chi|\le x}}\frac{t^{\rho_\chi}}{\rho_\chi}\Big|^2dt.
\end{align*}
Then we expand the square and integrate over $t$. This gives
\begin{align*}
\int_{x}^{2x}|\psi_2(t)|^2dt
\ll\!\!\!
\dsum_{\substack{\rho_\chi,\rho_\chi'\\x/h<|\gamma_\chi|,|\gamma'_\chi|\le x}}\!\!\!
\frac{x^{\beta_\chi+\beta'_\chi+1}}
{|\gamma_\chi||\gamma'_\chi|(1+|\gamma_\chi-\gamma'_\chi|)}.
\end{align*}
Obviously,
\[
\frac{x^{\beta_\chi+\beta'_\chi}}{|\gamma_\chi||\gamma'_\chi|}
\ll
\frac{x^{2\beta_\chi}}{|\gamma_\chi|^2}
+
\frac{x^{2\beta'_\chi}}{|\gamma'_\chi|^2}.
\]
By using the symmetry of the terms in $\gamma_{\chi}$ and $\gamma_{\chi'}$) we have
\begin{align*}
\int_{x}^{2x}|\psi_2(t)|^2dt
&\ll
x^{1+2B_q^\ast}
\sum_{\substack{\rho_\chi\\x/h<|\gamma_\chi|\le x}}
\frac{1}{|\gamma_\chi|^2}
\sum_{\substack{\rho_\chi'\\|\gamma'_\chi|\le x}}
\frac{1}{1+|\gamma_\chi-\gamma'_\chi|}\\
&\ll
x^{1+2B_q^\ast}(\log qx)^2
\sum_{\substack{\rho_\chi\\x/h<|\gamma_\chi|\le x}}\frac{1}{|\gamma_\chi|^2}
\ll
hx^{2B_q^\ast}(\log qx)^3,
\end{align*}
where we used Lemma \ref{Lem:zero_sum2} for the last estimate.
This completes the proof.
\end{proof}

Now let
\[
T(\alpha)=\sum_{n\le x}e(n\alpha),\quad
S(\alpha,\chi)=\sum_{n\le x}\chi(n)\Lambda(n)e(n\alpha),
\] and
\[
W(\alpha,\chi)
=
S(\alpha,\chi)-\delta_0(\chi)T(\alpha),
\]
where $e(\alpha)=\exp(2\pi i\alpha)$.
Our next task is to translate the previous estimate of the integral
into this exponential sum setting.

\begin{lem}
\label{W_PNT}
Let $x^{-1}\le\xi\le1/2$.
For any $\chi\ (\md q)$, we have
\[
\int_{-\xi}^{\xi}|W(\alpha,\chi)|^2d\alpha\ll\xi x^{2B_q^\ast}(\log qx)^4.
\]
\end{lem}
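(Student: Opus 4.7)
The plan is to combine Gallagher's inequality for exponential sums with the Selberg-type mean-square estimate of Lemma~\ref{Selberg}.

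Gallagher's inequality (cf.\ \cite[Lemma~9.1]{MV}) asserts that for any finitely supported sequence $(a_n)$ and any $0<\xi\le 1/2$,
\[
\int_{-\xi}^{\xi}\Bigl|\sum_n a_n e(n\alpha)\Bigr|^2\,d\alpha
\ll \xi^2\int_{-\infty}^{\infty}\Bigl|\sum_{t<n\le t+h}a_n\Bigr|^2\,dt,
\qquad h=(2\xi)^{-1}.
\]
I apply this with $a_n=\chi(n)\Lambda(n)-\delta_0(\chi)$ for $1\le n\le x$ and $a_n=0$ otherwise, so that $\sum_n a_n e(n\alpha)=W(\alpha,\chi)$. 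By the hypothesis $x^{-1}\le\xi\le 1/2$, we have $1\le h\le x/2$. The case $h<2$, i.e.\ $\xi>1/4$, is immediate from Parseval, since $\int_{-1/2}^{1/2}|W(\alpha,\chi)|^2\,d\alpha\ll\sum_{n\le x}\Lambda(n)^2\ll x\log x$, which is absorbed into $\xi x^{2B_q^\ast}(\log qx)^4$ for $\xi\asymp 1$. Henceforth assume $h\ge 2$.

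Since $a_n$ is supported in $[1,x]$, the integrand on the right vanishes outside $t\in[-h,x]$. I decompose $[1,x]$ dyadically into intervals $[y,2y]$ with $y=2^j$ and $h\le y\le x$. On each such piece, Lemma~\ref{Selberg} applied with $y$ in place of $x$ (the hypothesis $h\le y$ holds by construction) yields the bound $h y^{2B_q^\ast(y)}(\log qy)^4\ll h y^{2B_q^\ast}(\log qx)^4$, using the monotonicity $B_q^\ast(y)\le B_q^\ast(x)$ for $y\le x$. Summing the geometric series in $y$ (dominated by $y\asymp x$ since $2B_q^\ast\ge 1$) produces a total contribution of $O(h x^{2B_q^\ast}(\log qx)^4)$ with no further logarithmic loss.

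The residual region $t\in[-h,h]$, together with the small truncation mismatch near $t=x$, must be treated more delicately, since the naive Chebyshev bound $|G(t)|\ll h$ squared and integrated yields $O(h^3)$, which after multiplication by $\xi^2$ is $O(h)$ and does not preserve the $B_q^\ast$ savings when $\xi$ is close to $x^{-1}$. I instead use the identity $\sum_{t<n\le t+h}(\chi(n)\Lambda(n)-\delta_0(\chi))=A(t+h)-A(\max(t,0))$, where $A(u)=\psi(u,\chi)-\delta_0(\chi)u$. Combining Lemma~\ref{lem-1} with Lemma~\ref{Lem:zero_sum}, the Vinogradov--Korobov zero-free region, and Siegel's theorem (to handle any exceptional zero) gives the uniform pointwise bound $|A(u)|\ll u^{B_q^\ast}(\log qx)^2$ for $0\le u\le 2h\le x$, whence $|G(t)|^2\ll h^{2B_q^\ast}(\log qx)^4$ and the contribution of the residual integrates to $\ll h^{2B_q^\ast+1}(\log qx)^4\ll h x^{2B_q^\ast}(\log qx)^4$. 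Collecting everything,
\[
\int_{-\xi}^{\xi}|W(\alpha,\chi)|^2\,d\alpha\ll\xi^2\cdot h x^{2B_q^\ast}(\log qx)^4=\xi x^{2B_q^\ast}(\log qx)^4,
\]
as claimed.

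The principal obstacle is not Gallagher's reduction itself but the handling of the small-$|t|$ residual: the Chebyshev bound is too weak, and to preserve the $B_q^\ast$ savings one must feed the explicit formula (Lemma~\ref{lem-1}) back into the pointwise bound for the partial sum $A(u)$. The remainder of the argument is a routine dyadic assembly.
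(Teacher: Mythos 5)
Your proof follows the same architecture as the paper's: Gallagher's lemma, a three-way split of the resulting $t$-integral into a small-$t$ residual, a middle range, and a near-$x$ truncation range, with the middle range dissected dyadically and fed to Lemma~\ref{Selberg}, and the two boundary ranges handled by a pointwise bound from the explicit formula. Your extra preliminary treatment of the case $h<2$ via Parseval is a harmless additional clean-up.

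There is one technical slip in the pointwise step, though it does not damage the final conclusion. You claim $|A(u)|\ll u^{B_q^\ast}(\log qx)^2$ from Lemma~\ref{lem-1}, but that lemma carries the error term $\delta_1(\chi)q^{1/2}(\log q)^2$, which is \emph{not} absorbed by $u^{B_q^\ast}(\log qx)^2$ when $u$ is small and $q$ is large with an exceptional character present; Siegel's theorem controls $\beta_1\le B_q^\ast$ but does nothing to tame the $1/(1-\beta_1)$ denominator that produces the $q^{1/2}$ term. The paper instead invokes Lemma~\ref{Lem:explicit}, which retains the constant $C(\chi^\ast)$, and exploits the cancellation
\[
\delta_1(\chi)\frac{t^{1-\beta_1}}{1-\beta_1}-C(\chi^\ast)
=\delta_1(\chi)\frac{t^{1-\beta_1}-1}{1-\beta_1}+O(\log 2q)
\ll x^{1/2}(\log qx),
\]
which is then absorbed into $x^{B_q^\ast}(\log qx)^2$ using $B_q^\ast\ge1/2$. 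The upshot is that the provable pointwise bound is $|A(u)|\ll x^{B_q^\ast}(\log qx)^2$ (with $x$, not $u$); this still gives $|G(t)|^2\ll x^{2B_q^\ast}(\log qx)^4$ on the residual intervals of total length $O(h)$, hence $\xi^2 I_\pm\ll\xi x^{2B_q^\ast}(\log qx)^4$ as required, so your overall strategy and conclusion are sound once that step is repaired.
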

\begin{proof}
We first note that
\[
W(\alpha,\chi)
=
\sum_{0<n\le x}\left(\chi(n)\Lambda(n)-\delta_0(\chi)\right)e(n\alpha).
\]
Thus using Gallagher's lemma \cite[Lemma 1]{Gal70}, we have
\begin{align*}
\int_{-\xi}^{\xi}|W(\alpha,\chi)|^2d\alpha
&=
\int_{-\xi}^{\xi}\Bigg|\sum_{0<n\le x}\left(\chi(n)\Lambda(n)-\delta_0(\chi)\right)e(n\alpha)\Bigg|^2d\alpha.\\
&\ll
\xi^2\int_{-(2\xi)^{-1}}^{x}
\Bigg|\sum_{a(t)<n\le b(t)}\left(\chi(n)\Lambda(n)-\delta_0(\chi)\right)\Bigg|^2dt,
\end{align*}
where
\[
a(t)=\max(t,0),\quad
b(t)=\min(t+(2\xi)^{-1},x).
\]
We decompose this integral as
\[
\ll
\xi^2\int_{-(2\xi)^{-1}}^{(2\xi)^{-1}}
+
\xi^2\int_{(2\xi)^{-1}}^{x-(2\xi)^{-1}}
+
\xi^2\int_{x-(2\xi)^{-1}}^{x}
=
\xi^2I_{-}+\xi^2I+\xi^2I_{+},\text{ say}.
\]
By Lemma \ref{Lem:explicit}, the Vinogadov-Korobov zero free-region,
and Siegel's theorem we obtain
\[
\sum_{n\le t}\chi(n)\Lambda(n)-\delta_0(\chi)t
\ll
x^{B_q^\ast}\sum_{\substack{\rho_\chi\neq1-\beta_1\\|\gamma_\chi|\le x}}\frac{1}{|\rho_\chi|}
+\left|\delta_1(\chi)\frac{t^{1-\beta_1}}{1-\beta_1}-C(\chi^\ast)\right|+(\log qx)^2
\]
for $2\le t\le 2x$. The second term on the right-hand side is estimated
by using Theorem 11.4 and formula (12.7) of \cite{MV} as
\begin{align*}
\delta_1(\chi)\frac{t^{1-\beta_1}}{1-\beta_1}-C(\chi^\ast)
&=
\delta_1(\chi)\frac{t^{1-\beta_1}-1}{1-\beta_1}+O(\log2q)\\
&=
\delta_1(\chi)\frac{\log t}{1-\beta_1}\int_0^{1-\beta_1}t^{\sigma}d\sigma+O(\log 2q)\\
&\ll
x^{1-\beta_1}(\log 2x)+\log 2q
\ll
x^{1/2}(\log qx),
\end{align*}
since $\beta_1>1/2$. Thus, by Lemma \ref{Lem:zero_sum}, we have
\begin{align*}
\sum_{n\le t}\chi(n)\Lambda(n)-\delta_0(\chi)t
&\ll
x^{B_q^\ast}(\log qx)^2+x^{1/2}(\log qx)+(\log qx)^2
\ll
x^{B_q^\ast}(\log qx)^2,
\end{align*}
which also holds trivially for $0\le t<2$. Thus for any $0\le a\le b\le 2x$, we have
\begin{align*}
\sum_{a<n\le b}\left(\chi(n)\Lambda(n)-\delta_0(\chi)\right)
&=\sum_{a<n\le b}\chi(n)\Lambda(n)-\delta_0(\chi)(b-a)+O(1)\\
&\ll x^{B_q^\ast}(\log qx)^2.
\end{align*}
By substituting this estimate into $I_{\pm}$, we obtain
\[
\xi^2I_{\pm}\ll\xi x^{2B_q^\ast}(\log qx)^4,
\]
since $I_{\pm}$ are integrals taken over intervals of length $\le\xi^{-1}$.
Finally,
\begin{align*}
\xi^2I
{}={}&
\xi^2\int_{(2\xi)^{-1}}^{x-(2\xi)^{-1}}
\Bigg|\sum_{t<n\le t+(2\xi)^{-1}}(\chi(n)\Lambda(n)-\delta_0(\chi))\Bigg|^2dt\\
{}\ll{}&
\xi^2\int_{(2\xi)^{-1}}^{x}
\Bigg|\sum_{t<n\le t+(2\xi)^{-1}}\chi(n)\Lambda(n)-\delta_0(\chi)(2\xi)^{-1}\Bigg|^2dt+\xi^2x\\
{}\ll{}&
\xi^2\sum_{k=0}^{O(\log x)}
\int_{x/{2^{k+1}}}^{x/2^k}
\Bigg|\sum_{t<n\le t+(2\xi)^{-1}}\chi(n)\Lambda(n)-\delta_0(\chi)(2\xi)^{-1}\Bigg|^2dt+\xi^2 x\\
{}\ll{}&
\xi x^{2B_q^\ast}(\log qx)^4
\end{align*}
by Lemma \ref{Selberg}.
Summing up the above calculations,
we obtain the lemma.
\end{proof}

Let
\[
J(\chi)
=
\int_{-1/2}^{1/2}|W(\alpha,\chi)|^2|T(\alpha)|d\alpha.
\]
Now by using the previous results we can obtain an estimate for this quantity.
\begin{lem}
\label{J_estimate}
We have
\[
J(\chi)
\ll
x^{2B_q^\ast}(\log qx)^5.
\]
\end{lem}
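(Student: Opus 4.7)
The plan is to bound $J(\chi)$ by combining the pointwise estimate $|T(\alpha)| \ll \min(x, \|\alpha\|^{-1})$ for the linear exponential sum with the mean square bound for $W(\alpha,\chi)$ already established in Lemma \ref{W_PNT}, via a dyadic decomposition of the integration range.

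First I would split the integral at $|\alpha| = 1/x$. On the central piece $|\alpha| \le 1/x$, use the trivial bound $|T(\alpha)| \le x$ and apply Lemma \ref{W_PNT} with $\xi = 1/x$, giving
\[
\int_{|\alpha| \le 1/x} |W(\alpha,\chi)|^2 |T(\alpha)|\, d\alpha
\ll x \cdot \frac{1}{x} \cdot x^{2B_q^\ast}(\log qx)^4
= x^{2B_q^\ast}(\log qx)^4.
\]

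For the outer range $1/x < |\alpha| \le 1/2$, decompose dyadically: for each integer $k$ with $0 \le k \ll \log x$, let $I_k = \{\alpha : 2^k/x < |\alpha| \le 2^{k+1}/x\}$. On $I_k$ we have the standard bound $|T(\alpha)| \ll 1/|\alpha| \ll x/2^k$. Since $I_k \subset [-\xi_k, \xi_k]$ with $\xi_k = 2^{k+1}/x \le 1/2$ (for $k$ in our range), Lemma \ref{W_PNT} gives
\[
\int_{I_k} |W(\alpha,\chi)|^2\,d\alpha
\le \int_{-\xi_k}^{\xi_k} |W(\alpha,\chi)|^2\,d\alpha
\ll \frac{2^{k+1}}{x} x^{2B_q^\ast}(\log qx)^4.
\]
Therefore
\[
\int_{I_k} |W(\alpha,\chi)|^2 |T(\alpha)|\,d\alpha
\ll \frac{x}{2^k} \cdot \frac{2^{k+1}}{x}\, x^{2B_q^\ast}(\log qx)^4
\ll x^{2B_q^\ast}(\log qx)^4,
\]
uniformly in $k$.

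Summing over the $O(\log x)$ dyadic pieces picks up one more factor of $\log x$, producing the claimed bound $x^{2B_q^\ast}(\log qx)^5$ and combining with the central contribution yields the lemma. There is no real obstacle here; the only point requiring care is that $\xi_k$ stays within the admissible range $[x^{-1}, 1/2]$ of Lemma \ref{W_PNT}, which is automatic from the definition of $I_k$, and that the number of dyadic blocks only costs one extra logarithm, exactly accounting for the gain from $(\log qx)^4$ in Lemma \ref{W_PNT} to $(\log qx)^5$ here.
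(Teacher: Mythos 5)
Your proof is correct and follows essentially the same route as the paper: split at $|\alpha|=1/x$, bound $|T(\alpha)|$ by $\min(x,|\alpha|^{-1})$, decompose the outer range dyadically, apply Lemma~\ref{W_PNT} on each dyadic block, and pay one extra factor of $\log x$ for the $O(\log x)$ blocks. The paper merely states the dyadic step more compactly as a supremum over $\xi\in(1/x,1/2]$, whereas you spell out the uniform bound on each block; the content is the same.
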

\begin{proof}
We dissect the integral dyadically as
\[
J(\chi)
\le
\int_{|\alpha|\le1/x}|W(\alpha,\chi)|^2|T(\alpha)|d\alpha
+
\sum_{k=1}^{O(\log x)}
\int_{1/2^{k+1}<|\alpha|\le1/2^k}|W(\alpha,\chi)|^2|T(\alpha)|d\alpha.
\]
Then since $T(\alpha)\ll\min(x,|\alpha|^{-1})$ for $|\alpha|\le1/2$, we have
\[
J(\chi)
\ll
(\log x)
\sup_{1/x<\xi\le1/2}
\xi^{-1}\int_{|\alpha|\le\xi}|W(\alpha,\chi)|^2d\alpha
\ll
x^{2B_q^\ast}(\log qx)^5
\]
by Lemma \ref{W_PNT}. 
\end{proof}

\section{Proofs of Theorems \ref{Asymptotic_B_SP} and \ref{Asymptotic_1B}}\label{Asymp2}
We  let
\[
G(n;\chi_1,\chi_2)=\sum_{\ell+m=n}\chi_1(\ell)\Lambda(\ell)\chi_2(m)\Lambda(m),\quad
S(x;\chi_1,\chi_2)=\sum_{n\le x}G(n;\chi_1,\chi_2)
\]
and prove the following intermediate lemma.
\begin{lem}
\label{pre_lem}
For $x\ge2$ and $\chi_1,\chi_2\ (\md q)$, we have
\begin{multline*}
S(x;\chi_1,\chi_2)
=
\frac{\delta_0(\chi_1)\delta_0(\chi_2)}{2}x^2
-\delta_0(\chi_2)H(x,\chi_1)-\delta_0(\chi_1)H(x,\chi_2)
+R(x;\chi_1,\chi_2)\\
+O\Bigl(\delta_0(\chi_2)(1+\delta_1(\chi_1)q^{1/2})x(\log qx)^2
+\delta_0(\chi_1)(1+\delta_1(\chi_2)q^{1/2})x(\log qx)^2\Bigr),
\end{multline*}
where
\[
H(x,\chi)
=
\sum_{\rho_\chi}\frac{x^{\rho_\chi+1}}{\rho_\chi(\rho_\chi+1)},\quad
R(x;\chi_1,\chi_2)
=
\int_0^1W(\alpha,\chi_1)W(\alpha,\chi_2)T(-\alpha)d\alpha.
\]
\end{lem}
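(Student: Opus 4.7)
The plan is to apply the circle method directly to $S(x;\chi_1,\chi_2)$. First, by orthogonality of additive characters,
\[
S(x;\chi_1,\chi_2)=\int_0^1 S(\alpha,\chi_1)S(\alpha,\chi_2)T(-\alpha)\,d\alpha,
\]
since the right-hand side counts triples $(\ell,m,n)$ with $\ell+m=n$ and $1\le\ell,m,n\le\lfloor x\rfloor$ weighted by $\chi_1(\ell)\Lambda(\ell)\chi_2(m)\Lambda(m)$, which reproduces $S(x;\chi_1,\chi_2)$ exactly because $\Lambda$ is supported on the positive integers. Writing $S(\alpha,\chi_i)=\delta_0(\chi_i)T(\alpha)+W(\alpha,\chi_i)$ and expanding produces four terms. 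The $W(\alpha,\chi_1)W(\alpha,\chi_2)T(-\alpha)$ piece, integrated over $[0,1]$, is by definition $R(x;\chi_1,\chi_2)$, so only the $T^2$ piece and the two mixed $TW$ pieces remain.

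Second, I would evaluate these three pieces by orthogonality. The diagonal yields
\[
\int_0^1 T(\alpha)^2 T(-\alpha)\,d\alpha=\#\{(\ell,m,n):\ell+m=n,\ 1\le\ell,m,n\le\lfloor x\rfloor\}=\frac{x^2}{2}+O(x),
\]
contributing $\delta_0(\chi_1)\delta_0(\chi_2)x^2/2$ to the main term. For a mixed piece, orthogonality gives
\[
\int_0^1 T(\alpha)W(\alpha,\chi)T(-\alpha)\,d\alpha=\sum_{n\le\lfloor x\rfloor}(\lfloor x\rfloor-n)\bigl(\chi(n)\Lambda(n)-\delta_0(\chi)\bigr),
\]
which by Abel summation equals $\int_0^{\lfloor x\rfloor}(\psi(u,\chi)-\delta_0(\chi)u)\,du$ up to an $O(x)$ correction present only in the principal case.

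Third, I would apply Lemma \ref{lem-1} with $T=x$ inside this last integral and swap sum and integral. The $u^{\rho_\chi}/\rho_\chi$ terms integrate to $\lfloor x\rfloor^{\rho_\chi+1}/(\rho_\chi(\rho_\chi+1))$; replacing $\lfloor x\rfloor$ by $x$ and extending the zero-sum from $|\gamma_\chi|\le x$ to all non-trivial zeros incur corrections controlled by Lemma \ref{Lem:zero_sum2}, so the main term collapses to $-H(x,\chi)$. The error from Lemma \ref{lem-1} integrated over $[0,\lfloor x\rfloor]$ produces $O\bigl((1+\delta_1(\chi)q^{1/2})x(\log qx)^2\bigr)$, exactly the shape required. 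Multiplying by $\delta_0(\chi_{3-i})$ from the expansion in Step~1 and summing the four contributions yields the claimed identity.

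The only technical subtlety is the bookkeeping of the Siegel-zero contribution: the $\delta_1(\chi)q^{1/2}(\log q)^2$ term in Lemma \ref{lem-1} must be integrated to $\delta_1(\chi)q^{1/2}x(\log qx)^2$ and identified with the $\delta_1(\chi_i)q^{1/2}$ factor in the statement; likewise the $O(x)$ Abel-summation remainder and the $\lfloor x\rfloor\to x$ replacement error must all be absorbed. Beyond this accounting, the proof is essentially a clean orthogonality rearrangement followed by termwise integration of the explicit formula; no delicate cancellation between zeros is needed, since the expected main contribution $H(x,\chi)$ retains the full sum over zeros.
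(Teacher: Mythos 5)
Your proposal is correct and follows essentially the same route as the paper: the same integral representation via orthogonality, the same decomposition $S(\alpha,\chi_i)=\delta_0(\chi_i)T(\alpha)+W(\alpha,\chi_i)$ (the paper phrases the three non-$R$ pieces as $\delta_0(\chi_2)I(\chi_1)+\delta_0(\chi_1)I(\chi_2)-\delta_0(\chi_1)\delta_0(\chi_2)I$, which after algebra coincides with your $T^2$ plus two mixed $TW$ terms), and the same evaluation of the mixed piece by partial summation into $\int_0^x\psi(u,\chi)\,du$ followed by Lemma~\ref{lem-1} with $T=x$ and extension of the zero sum via Lemma~\ref{Lem:zero_sum2}. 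The bookkeeping you flag at the end is exactly what the paper carries out, and no step in your outline would fail.
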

\begin{proof}
By the orthogonality of the exponential function we have
\begin{equation}
\label{EQ:S_integral_expression}
S(x;\chi_1,\chi_2)=\int_0^1S(\alpha,\chi_1)S(\alpha,\chi_2)T(-\alpha)d\alpha.
\end{equation}
From the definition we have an expansion
\begin{multline}
S(\alpha,\chi_1)S(\alpha,\chi_2)
=
\delta_0(\chi_2)S(\alpha,\chi_1)T(\alpha)
+
\delta_0(\chi_1)S(\alpha,\chi_2)T(\alpha)\\
-
\delta_0(\chi_1)\delta_0(\chi_2)T(\alpha)^2
+
W(\alpha,\chi_1)W(\alpha,\chi_2).
\end{multline}
Substituting this decomposition into the integral expression \eqref{EQ:S_integral_expression}, we have
\[
S(x;\chi_1,\chi_2)
=
\delta_0(\chi_2)I(\chi_1)+\delta_0(\chi_1)I(\chi_2)
-\delta_0(\chi_1)\delta_0(\chi_2)I
+R(x;\chi_1,\chi_2),
\]
where
\[
I
=
\int_0^1T(\alpha)^2T(-\alpha)d\alpha,\quad
I(\chi)
=
\int_0^1S(\alpha,\chi)T(\alpha)T(-\alpha)d\alpha.
\]
Therefore it is sufficient to show that
\begin{equation}
\label{EQ:I_goal}
I=\frac{x^2}{2}+O(x),\quad
I(\chi)=\frac{\delta_0(\chi)x^2}{2}-H(x,\chi)+O((1+\delta_1(\chi)q^{1/2})x(\log qx)^2).
\end{equation}
The first integral $I$ is evaluated by using the orthogonality as
\[
I=\sum_{\ell+m\le x}1=\sum_{n\le x}(n-1)=\frac{x^2}{2}+O(x).
\]
The second integral $I(\chi)$ is
\begin{align*}
I(\chi)
&=
\sum_{\ell+m\le x}\chi(\ell)\Lambda(\ell)
=
\sum_{n\le x}(x-n)\chi(n)\Lambda(n)+O(x)\\
&=
\sum_{n\le x}\chi(n)\Lambda(n)\int_{n}^{x}du+O(x)
=\int_0^x \psi(u,\chi)du+O(x)
\end{align*}
by partial summation. We substitute Lemma \ref{lem-1} with $T=x$.
Then
\begin{equation}
\label{EQ:pre_I_goal}
I(\chi)
=
\frac{\delta_0(\chi)x^2}{2}
-\sum_{\substack{\rho_\chi\\|\gamma_\chi|\le x}}\frac{x^{\rho_\chi+1}}{\rho_\chi(\rho_\chi+1)}
+O(x(\log qx)^2+\delta_1(\chi)q^{1/2}(\log q)^2x).
\end{equation}
We then extend the sum over zeros which this gives the error term of the size
\[
\sum_{\substack{\rho_\chi\\|\gamma_\chi|>x}}\frac{x^{\rho_\chi+1}}{\rho_\chi(\rho_\chi+1)}
\ll
x^{2}\sum_{\substack{\rho_\chi\\|\gamma_\chi|>x}}\frac{1}{|\rho_\chi|^2}
\ll
x(\log qx)^2,
\]
by the use of Lemma \ref{Lem:zero_sum2} in the last estimate.
Substituting this estimate into \eqref{EQ:pre_I_goal},
we obtain \eqref{EQ:I_goal} for $I(\chi)$.

\end{proof}

Now Theorem \ref{Asymptotic_B_SP} can be proven.
\begin{proof}[Proof of Theorem \ref{Asymptotic_B_SP}]
By the orthogonality of characters, we have
\[
S(x;q,a,b)
=
\frac{1}{\varphi(q)^2}\sum_{\chi_1,\chi_2\,(\md q)}
\overline{\chi_1(a)}\overline{\chi_2(b)}S(x;\chi_1,\chi_2).
\]
Thus, by Lemma \ref{pre_lem}, it suffices to show that
\[
\frac{1}{\varphi(q)^2}\sum_{\chi_1,\chi_2\,(\md q)}
|R(x;\chi_1,\chi_2)|
\ll
x^{2B_q^\ast}(\log qx)^5.
\]
By the Cauchy--Schwarz inequality and Lemma \ref{J_estimate},
the right-hand side above is
\[
\ll
\frac{1}{\varphi(q)^2}\sum_{\chi_1,\chi_2\,(\md q)}
J(\chi_1)^{1/2}J(\chi_2)^{1/2}
\ll
x^{2B_q^\ast}(\log qx)^5.
\]
This completes the proof.
\end{proof}

We move on to the proof of Theorem \ref{Asymptotic_1B}.
At first, it might seem that we can obtain this asymptotic formula
by summing up Theorem \ref{Asymptotic_B_SP} over residues.
However this procedure violates the uniformity over $q$ and so
instead  we take advantage of the ``bilinear nature''
of the error term $R(x;\chi_1,\chi_2)$ in Lemma \ref{pre_lem}.
With this in mind we prove the following 
which will be used in its full generality in the proof of Theorem \ref{thmsiegel}.

\begin{lem}\label{character_sum}
For positive integers $c,q$ and a character $\chi\ (\md q)$,
we have
\[
\sum_{\substack{a=1\\(a(c-a),q)=1}}^q\kern-10pt\chi(a)
=
\mu(q^\ast)\chi^\ast(c)\frac{\varphi(q)}{\varphi(q^\ast)}
\prod_{\substack{p\mid q\\p\nmid q^\ast c}}
\frac{p-2}{p-1},
\]
where $\chi^\ast\ (\md q^\ast)$ is the primitive character
which induces $\chi$.
\end{lem}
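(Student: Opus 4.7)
The plan is to factor the sum by the Chinese Remainder Theorem into local factors over the primes dividing $q$, evaluate each factor by explicit case analysis, and multiply them out. Write $q = \prod_p p^{e_p}$ and $q^\ast = \prod_p p^{f_p}$ with $0 \le f_p \le e_p$, so that $\chi$ factors as $\prod_p \chi_p$ with each $\chi_p$ induced from the primitive $\chi_p^\ast$ modulo $p^{f_p}$. Under CRT the joint coprimality condition $(a(c-a), q) = 1$ decouples into $p \nmid a$ and $p \nmid c - a$ locally, so the sum splits as
\[
\sum_{\substack{a = 1 \\ (a(c-a), q) = 1}}^q \chi(a) = \prod_{p \mid q} S_p, \qquad S_p := \sum_{\substack{a \,(\md p^{e_p})\\ p \,\nmid\, a,\ p \,\nmid\, c - a}} \chi_p(a).
\]

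The main obstacle is to show that $S_p = 0$ whenever $f_p \ge 2$; this is precisely the mechanism that forces the factor $\mu(q^\ast)$ in the claimed formula. I would handle it by inclusion–exclusion on the condition $p \nmid c - a$: the sum of $\chi_p$ over all residues coprime to $p$ vanishes (non-principality), while the remaining sum over $a \equiv c \,(\md p)$ can be rewritten, after extracting $\chi_p^\ast(c)$, as $p^{e_p - f_p}$ copies of $\sum_{h \in H} \chi_p^\ast(h)$, where $H$ is the kernel of the reduction $(\mathbb{Z}/p^{f_p}\mathbb{Z})^\times \to (\mathbb{Z}/p\mathbb{Z})^\times$. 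For $f_p \ge 2$ the primitivity of $\chi_p^\ast$ prevents it from being trivial on $H$, so this inner sum vanishes; for $f_p = 1$ the subgroup $H$ is trivial and the same computation produces an explicit nonzero contribution.

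The remaining cases are routine bookkeeping. If $f_p = 0$ and $p \mid c$, the two local conditions collapse to $p \nmid a$, giving $S_p = \varphi(p^{e_p})$; if $f_p = 0$ and $p \nmid c$, one further residue class is excluded and $S_p = p^{e_p - 1}(p - 2)$; if $f_p = 1$ and $p \mid c$, the same inclusion–exclusion reduces $S_p$ to a non-principal character sum over $(\mathbb{Z}/p^{e_p}\mathbb{Z})^\times$ and hence $S_p = 0$, which is exactly the vanishing that forces the condition $(c, q^\ast) = 1$; and if $f_p = 1$ and $p \nmid c$, the procedure yields $S_p = -p^{e_p - 1}\chi_p^\ast(c)$.

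Finally I would multiply these local contributions and match. Grouping by type and using the factorizations $\mu(q^\ast) = \prod_{f_p = 1}(-1)$, $\chi^\ast(c) = \prod_{f_p = 1}\chi_p^\ast(c)$, and
\[
\frac{\varphi(q)}{\varphi(q^\ast)} = \prod_{f_p = 1} p^{e_p - 1} \cdot \prod_{f_p = 0} p^{e_p - 1}(p - 1),
\]
the product $\prod_p S_p$ matches the claimed right-hand side term by term, with the factor $(p - 2)/(p - 1)$ appearing exactly for the primes with $f_p = 0$ and $p \nmid c$, as required.
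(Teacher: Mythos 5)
Your proof is correct and takes essentially the same route as the paper: both reduce by the Chinese Remainder Theorem to prime-power moduli and then run a case analysis on $f_p$ and on whether $p\mid c$. The only difference is that for the crucial vanishing when $f_p\ge 2$ the paper cites Theorem 9.4 of Montgomery--Vaughan, whereas you prove that vanishing directly by noting that the primitivity of $\chi_p^\ast$ forces its restriction to the kernel $H$ of $(\mathbb{Z}/p^{f_p}\mathbb{Z})^\times\to(\mathbb{Z}/p\mathbb{Z})^\times$ to be nontrivial; this is just a self-contained unpacking of the same fact, not a genuinely different method.
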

\begin{proof} By using the Chinese Remainder Theorem
and decomposing the character into the product of characters of prime power moduli
it is sufficient to prove the lemma in the case where $q$ is a prime power,
say $q=p^k$ and $q^\ast=p^\ell$.
If $\ell=0$, then
\[
\sum_{\substack{a=1\\(a(c-a),q)=1}}^q\kern-10pt\chi(a)
=
\sum_{\substack{a=1\\(a(c-a),p)=1}}^{p^k}1
=
\sum_{\substack{a=1\\a\not\equiv0,c\,(\md p)}}^{p^k}1
=
\left\{
\begin{array}{ll}
p^{k-1}(p-1)&(\text{if $p\mid c$}),\\
p^{k-1}(p-2)&(\text{if $p\nmid c$}),
\end{array}
\right.
\]
which coincides with the assertion.
If $\ell\ge1$, then we have
\[
\sum_{\substack{a=1\\(a(c-a),q)=1}}^q\kern-10pt\chi(a)
=
\sum_{\substack{a=1\\(a(c-a),p)=1}}^{p^k}\chi^\ast(a)
=
p^{k-\ell}
\sum_{\substack{a=1\\a\not\equiv0,c\,(\md p)}}^{p^\ell}\chi^\ast(a).
\]
If $\ell=1$, then by the orthogonality, this is
\[
=
p^{k-1}
\left(\sum_{a=1}^{p}\chi^\ast(a)-\sum_{\substack{a=1\\a\equiv c\,(\md p)}}^{p}\chi^\ast(a)\right)
=
\mu(p)\chi^\ast(c)\frac{\varphi(p^k)}{\varphi(p)},
\]
which also coincides with the assertion. If $\ell\ge2$, then by Theorem 9.4 of \cite{MV} we have
\[
\sum_{\substack{a=1\\(a(c-a),q)=1}}^q\kern-10pt\chi(a)
=
p^{k-\ell}
\left(\sum_{a=1}^{p^\ell}\chi^\ast(a)-\sum_{\substack{a=1\\a\equiv c\,(\md p)}}^{p^\ell}\chi^\ast(a)\right)
=0,
\]
which again satisfies the claimed equality. 
\end{proof}

\begin{proof}[Proof of Theorem \ref{Asymptotic_1B}]
By using the symmetry between $\ell$ and $m$, we have
\[
\sum_{\substack{n\le x\\n\equiv c\,(\md q)}}\hspace{-2mm}G(n)
=
\sum_{\substack{\ell+m\le x\\\ell+m\equiv c\,(\md q)\\(\ell m,q)=1}}
\Lambda(\ell)\Lambda(m)
+
O\Bigg(\sum_{\substack{\ell+m\le x\\\ell+m\equiv c\,(\md q)\\(m,q)>1}}
\Lambda(\ell)\Lambda(m)\Bigg).
\]
This error term can be estimated as
\begin{align*}
\sum_{\substack{\ell+m\le x\\\ell+m\equiv c\,(\md q)\\(m,q)>1}}
\Lambda(\ell)\Lambda(m)
\ll
\sum_{\ell\le x}\Lambda(\ell)
\sum_{\substack{m\le x\\(m,q)>1}}\Lambda(m)
\ll
x(\log qx)^2,
\end{align*}
using the same estimate as in \eqref{EQ:remove_coprimality}.
Thus it suffices to consider
\[
\sum_{\substack{\ell+m\le x\\\ell+m\equiv c\,(\md q)\\(\ell m,q)=1}}
\Lambda(\ell)\Lambda(m)
=
\frac{1}{\varphi(q)^2}
\sum_{\substack{a=1\\(a(c-a),q)=1}}^q
\sum_{\chi_1,\chi_2\,(\md q)}
\overline{\chi_1(a)}\overline{\chi_2(c-a)}S(x;\chi_1,\chi_2).
\]
We apply Lemma \ref{pre_lem} to the right-hand side, and evaluate the resulting
expression. Clearly,
\[
\frac{1}{\varphi(q)^2}
\sum_{\substack{a=1\\(a(c-a),q)=1}}^{q}
\sum_{\chi_1,\chi_2\,(\md q)}
\delta_0(\chi_2)(1+\delta_1(\chi_1)q^{1/2})x(\log qx)^2
\ll
x(\log qx)^2.
\]

Also,
\begin{align}
\label{sieveidentity}
\sum_{\substack{a=1\\(a(c-a),q)=1}}^q1
=
\varphi(q)^2\mathfrak{S}_{q}(c)
\end{align}
by Lemma \ref{character_sum} with the principal character.
Therefore, it suffices to show that
\[
R
=
\frac{1}{\varphi(q)^2}
\sum_{\substack{a=1\\(a(c-a),q)=1}}^q
\sum_{\chi_1,\chi_2\,(\md q)}
\overline{\chi_1(a)}\overline{\chi_2(c-a)}R(x;\chi_1,\chi_2)
\ll
x^{2B_q^\ast}(\log qx)^5.
\]
We have
\begin{gather*}
\sum_{\chi_1,\chi_2\,(\md q)}
\overline{\chi_1(a)}\overline{\chi_2(c-a)}R(x;\chi_1,\chi_2)\\
=
\int_0^1
\left(\sum_{\chi_1\,(\md q)}\overline{\chi_1(a)}W(\alpha,\chi_1)\right)
\left(\sum_{\chi_2\,(\md q)}\overline{\chi_2(c-a)}W(\alpha,\chi_2)\right)
T(-\alpha)d\alpha.
\end{gather*}
Next the Cauchy--Schwarz inequality gives
\[
R
\ll
\frac{1}{\varphi(q)^2}\sum_{\substack{a=1\\(a(c-a),q)=1}}^q
\int_{-1/2}^{1/2}
\left|\sum_{\chi\,(\md q)}\overline{\chi(a)}W(\alpha,\chi)\right|^2
|T(\alpha)|d\alpha.
\]
By the orthogonality of characters, we have
\begin{align*}
\sum_{\substack{a=1\\(a(c-a),q)=1}}^q
\left|\sum_{\chi\,(\md q)}\overline{\chi(a)}W(\alpha,\chi)\right|^2
&\ll
\sum_{\substack{a=1\\(a,q)=1}}^q
\left|\sum_{\chi\,(\md q)}\overline{\chi(a)}W(\alpha,\chi)\right|^2\\
&=
\varphi(q)\sum_{\chi\,(\md q)}|W(\alpha,\chi)|^2.
\end{align*}
Thus, by Lemma \ref{J_estimate},
\[
R(\chi)
\ll
\frac{1}{\varphi(q)}\sum_{\chi\,(\md q)}J(\chi)
\ll
x^{2B_q^\ast}(\log qx)^5.
\]
Summing up the above calculations,
we complete the proof.
\end{proof}

\section{The connection between $S(x;q,a,b)$ and GRH}\label{Dirichlet_series}
Consider the Dirichlet series
\[
F(s)=F(s;q,a,b)=\sum_{n=1}^{\infty}\frac{G(n;q,a,b)}{n^s},
\]
which converges absolutely and is analytic for $\sigma>2$.
Analytic properties of $F(s)$ have been studied by Egami and the third author \cite{EM},
the first author and Schlage-Puchta \cite{B_SP11}
(in the case $q=1$), and by R{\"u}ppel \cite{Rup12} (general case).
In particular, the connection between $S(x;q,a,b)$ and GRH
can be understood through the analytic continuation of $F(s)$.
We first find the meromorphic continuation of $F(s)$
via Theorem \ref{Asymptotic_B_SP} in the following proposition.    This type of result , 
under GRH was obtained in \cite{EM}, \cite{Rup12}.
\begin{prop}\label{Ruppel_continuation}
The function $F(s)$ can be continued meromorphically
to the half plane $\sigma>2B_q$.
Its 
poles in the half plane $\sigma>2B_q$ are
\begin{enumerate}[(i)] 
\item a simple pole at $s=2$ with residue $\varphi(q)^{-2}$,
\item a possible pole
at $s=\rho_q+1$ of at most order 1 with residue
\[
r(\rho_q)
=
-\frac{1}{\varphi(q)^2}
\frac{1}{\rho_q}
\sum_{\substack{\chi\,(\md q)\\L(\rho_q,\chi)=0}}
(\overline{\chi(a)}+\overline{\chi(b)})m_\chi(\rho_q),
\]
where $\rho_q$ is a zero of 
$\prod_{\chi\,(\md q)}L(s,\chi)$
with $0<\Re \rho_q<1$ and $m_\chi(\rho_q)$ is the multiplicity of $\rho_q$
as a zero of $L(s,\chi)$.
\end{enumerate}
In particular, assuming {\upshape DZC}, $B_q<1$
and $\overline{\chi}(a)+\overline{\chi}(b)\neq0$ for all $\chi\ (\md q)$ we obtain

\[
1+B_q=\inf\Set{\sigma_0\ge\frac{3}{2}|
F(s)-\frac{1}{\varphi(q)^2}\frac{1}{s-2}\text{ is analytic on $\sigma>\sigma_0$}}.
\]
\end{prop}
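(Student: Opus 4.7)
The plan is to use the Mellin-type representation
\[
F(s) = s \int_1^\infty \frac{S(x;q,a,b)}{x^{s+1}}\, dx,
\]
valid for $\sigma > 2$ by partial summation (using $S(x;q,a,b) \ll x^2$), and substitute the explicit formula of Theorem~\ref{Asymptotic_B_SP}. The three pieces then contribute as follows. The main term $x^2/(2\varphi(q)^2)$ integrates to $s/(2\varphi(q)^2(s-2))$, yielding the simple pole at $s=2$ with residue $1/\varphi(q)^2$. For each non-trivial zero $\rho_\chi$, the factor $x^{\rho_\chi+1}/(\rho_\chi(\rho_\chi+1))$ integrates to $s/(\rho_\chi(\rho_\chi+1)(s-\rho_\chi-1))$, producing a simple pole at $s=\rho_\chi+1$ with residue $-(\overline{\chi(a)}+\overline{\chi(b)})/(\varphi(q)^2\rho_\chi)$; after grouping zeros sharing a common location $\rho_q$ and weighting by their multiplicities $m_\chi(\rho_q)$, one recovers the residue formula in (ii). Finally, the error term $O(x^{2B_q^\ast(x)}(\log qx)^5)$ yields an analytic function on $\sigma>2B_q$: when $B_q<1$, choose $\varepsilon>0$ small enough that $c_1(\varepsilon)/q^\varepsilon<1-B_q$, so that for $x$ past some $X_0$ we have $B_q^\ast(x)=B_q$ and the tail integral converges absolutely on $\sigma>2B_q$, while the compact part $\int_1^{X_0}$ is entire; the case $B_q=1$ is trivial since then $2B_q=2$. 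The interchange of the sum over zeros with the Mellin integral is justified by the absolute convergence of $\sum_{\rho_\chi}|\rho_\chi(\rho_\chi+1)|^{-1}$ supplied by Lemma~\ref{Lem:zero_sum2}.

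For the infimum identity, I would invoke DZC: every non-trivial zero $\rho_\chi$ with $\Re\rho_\chi>1/2$ belongs to a unique character $\chi\,(\md q)$, so the residue $r(\rho_q)$ at $s=\rho_q+1$ collapses to a single term and is nonzero under the assumption $\overline{\chi(a)}+\overline{\chi(b)}\neq 0$. Hence $F(s)-1/(\varphi(q)^2(s-2))$ has a genuine pole at $s=\rho_\chi+1$ for every zero with $\Re\rho_\chi>1/2$, while it is analytic on $\{s:\Re s>1+B_q\}$ since all such poles lie in $\Re s\le 1+B_q$. By the definition of $B_q$ as a supremum, these poles have real parts arbitrarily close to $1+B_q$, so no $\sigma_0<1+B_q$ lies in the set. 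In the extremal case $B_q=1/2$, the constraint $\sigma_0\ge 3/2$ itself forces the infimum to be $3/2=1+B_q$. Either way, the infimum equals $1+B_q$.

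The main obstacle, I expect, is the bookkeeping around the $x$-dependent exponent $B_q^\ast(x)$ in order to push the meromorphic continuation all the way to $\sigma>2B_q$ rather than only to $\sigma>2B_q+o(1)$; the splitting described above resolves this, but one has to be careful that the $\varepsilon$-dependence hidden in $c_1(\varepsilon)$ does not leak into the final half-plane of analyticity. A secondary point worth writing out explicitly is the absolute convergence on compacta of $\sum_{\chi,\rho_\chi}1/(\rho_\chi(\rho_\chi+1)(s-\rho_\chi-1))$, guaranteeing that the exchanged sum really defines a meromorphic function with exactly the poles listed.
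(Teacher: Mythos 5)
Your proof follows essentially the same route as the paper: the Mellin representation $F(s)=s\int_1^\infty S(u;q,a,b)u^{-s-1}du$, substitution of Theorem~\ref{Asymptotic_B_SP}, term-by-term integration (with absolute convergence of the zero sum supplied by Lemma~\ref{Lem:zero_sum2}), and the same two-sided argument for the infimum identity including the separate treatment of $B_q=1/2$. The one place you over-engineer is the error term: the paper simply observes that $B_q^\ast(x)\le B_q$ for all $x$, so $E(u;q,a,b)\ll u^{2B_q}(\log 2qu)^5$ uniformly and $\int_1^\infty u^{2B_q-\sigma-1}(\log 2qu)^5\,du$ already converges for every $\sigma>2B_q$ — the logarithm never costs a power of $u$, so there is no $o(1)$ loss to begin with. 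Your $X_0$-splitting and the attempt to tune $\varepsilon$ so that $c_1(\varepsilon)/q^\varepsilon<1-B_q$ are therefore unnecessary, and the latter in fact relies on monotonicity behavior of $c_1(\varepsilon)$ that the paper never states; you should drop that step and use the uniform bound $B_q^\ast\le B_q$ directly.
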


\begin{proof}
From Theorem \ref{Asymptotic_B_SP}, we have
\begin{equation}\label{asymp_continuation}
S(x;q,a,b)
=\frac{x^2}{2\varphi(q)^2}
+\sum_{\rho_q}r(\rho_q)\frac{x^{\rho_q+1}}{\rho_q+1}
+E(x;q,a,b)
\end{equation}
for $x\geq 1$, where
\begin{equation}\label{error_continuation}
E(x;q,a,b)\ll x^{2B_q}(\log 2qx)^5
\end{equation}
since $B_q^\ast\le B_q$.
For $\sigma>2$, we have
\begin{align*}
F(s)
=
\int_1^\infty u^{-s}dS(u;q,a,b)
=
s\int_1^{\infty}S(u;q,a,b)u^{-s-1}du
\end{align*}
with integration by parts, because $S(x;q,a,b)=0$ for $x<4$.
Substitute \eqref{asymp_continuation} in the right-hand side of the above.
The swapping of summation and integration is justified
due to absolute convergence.    Therefore we have
\begin{align}
\label{rhs_of_F}
F(s)
&=\frac{s}{2\varphi(q)^2(s-2)}+\sum_{\rho_q}\frac{r(\rho_q)s}{(\rho_q+1)(s-\rho_q-1)}
+s\int_1^\infty E(u;q,a,b)u^{-s-1}du\\
&=\frac{1}{\varphi(q)^2(s-2)}+\sum_{\rho_q}\frac{r(\rho_q)}{s-\rho_q-1}
+s\int_1^\infty E(u;q,a,b)u^{-s-1}du+C_1(q,a,b),\notag
\end{align}
where
\[
C_{1}(q,a,b)=\frac{1}{2\varphi(q)^{2}}+\sum_{\rho_{q}}  \frac{r(\rho_{q})}{\rho_{q}+1}
\]
and the sum  converges  due to Lemma \ref{Lem:zero_sum2} to yield a certain constant depending on $a,b,q$.
The sum on the right-hand side of \eqref{rhs_of_F} converges 
uniformly
for $s\in\mathbb{C}\setminus\{\rho_q+1\}$ 
and determines a meromorphic function
on $\mathbb{C}$.
(Since for $|\Im \rho_{q}|=|\gamma_{\chi}|>2|\Im s|$, we have $|\rho_{q}+1-s|\geq
|\gamma_{\chi}-\Im s|\geq |\gamma_{\chi}|/2$, then the compact
uniformly convergence can be justified by Lemma \ref{Lem:zero_sum2}.
Further note that in Lemma \ref{Lem:zero_sum2}, each zero appears with multiplicity.)
The first and second term on the right-hand side of \eqref{rhs_of_F}
already give the announced residues of the proposition.
Using the estimate \eqref{error_continuation} we see that the integral
\[
\int_1^\infty E(u;q,a,b)u^{-s-1}du
\]
converges 
uniformly on the half plane $\sigma>2B_q$ and so it
defines an analytic function on $\sigma>2B_q$.
This completes the proof of the 
meromorphic
continuation.

For the last assertion,
the inequality
\[
1+B_q\ge\inf\Set{\sigma_0\ge\frac{3}{2}|
F(s)-\frac{1}{\varphi(q)^2}\frac{1}{s-2}\text{ is analytic on $\sigma>\sigma_0$}}
\]
follows  from the above 
meromorphic
 continuation, since $1+B_{q}\geq 2B_{q}$.
We next prove the reverse inequality.
If $B_q=1/2$, then the implication is trivial.
Hence we can assume that 
 $1/2<B_q<1$ and we have $\max(2B_q,3/2)<1+B_q$,
so that we can take $\varepsilon>0$ such that
$\max(2B_q,3/2)<1+B_q-\varepsilon$.
By the definition of $B_q$, we can find a zero $\rho_q$ such that
$1/2<B_q-\varepsilon<\Re \rho_q$.
Then by the above 
meromorphic continuation,
we have a possible pole of $F(s)$ of residue $r(\rho_q)$ at $\rho_q+1$.
(Note that we do not necessarily have meromorphic continuation
on $\sigma>1+B_{q}-\varepsilon$ if $B_{q}=1$, since then
$B_{q}+1-\varepsilon=2B_{q}-\varepsilon<2B_{q}$.)
By DZC and
the assumption that $\Re\rho_q>1/2$ we have
\[
r(\rho_q)=-\frac{1}{\varphi(q)^2}\frac{1}{\rho_q}
(\overline{\chi(a)}+\overline{\chi(b)})m,
\]
where $m\ge1$.
Since we have assumed
that $\overline{\chi}(a)+\overline{\chi}(b)\neq0$ for all $\chi\ (\md q)$,
this residue is non-zero so that $\rho_q+1$ is a pole of $F(s)$
in the half plane $\sigma>1+B_q-\varepsilon>3/2$.
This implies
\[
1+B_q-\varepsilon\le\inf\Set{\sigma_0\ge\frac{3}{2}|
F(s)-\frac{1}{\varphi(q)^2}\frac{1}{s-2}\text{ is analytic on $\sigma>\sigma_0$}}
\]
so that on letting $\varepsilon\to0$ we obtain the reverse inequality.
\end{proof}

We can now prove Theorem \ref{MainTheorem-1}.
\begin{proof}[Proof of Theorem \ref{MainTheorem-1}]
First we prove the assertion (1).
If $q>x^{(1-B_q)/2}(\log x)$, then the left-hand side of \eqref{MainTheorem-1-asymp} is
\begin{align*}
\le
\left(\sum_{\substack{\ell\le x\\\ell\equiv a\,(\md q)}}\Lambda(\ell)\right)
\left(\sum_{\substack{m\le x\\m\equiv b\,(\md q)}}\Lambda(m)\right)
\ll
\frac{(x\log x)^2}{q^2}+(\log x)^2
\ll
x^{1+B_q}.
\end{align*}
Also the first term on the right-hand side of \eqref{MainTheorem-1-asymp} is $\ll x^{1+B_q}$.
Thus \eqref{MainTheorem-1-asymp} holds trivially.
Therefore, we may assume $q\le x^{(1-B_q)/2}(\log x)$.
By using Lemma \ref{Lem:zero_sum2},
\begin{align*}
&\frac{1}{\varphi(q)^2}\sum_{\chi\,(\md q)}(\overline{\chi(a)}+\overline{\chi(b)})
\sum_{\rho_{\chi}}
\frac{x^{\rho_{\chi}+1}}{\rho_{\chi}(\rho_{\chi}+1)}\\
{}\ll{}&
\frac{x^{B_q+1}}{\varphi(q)^2}\sum_{\chi\,(\md q)}
\left((\log 2q)^2+\delta_1(\chi)q^{1/2}(\log q)^2\right)
{}\ll{}
x^{1+B_q}.
\end{align*}
Therefore, by Theorem \ref{Asymptotic_B_SP}, we obtain
\[
S(x;q,a,b)=\frac{x^2}{2\varphi(q)^2}+O(x^{1+B_q}+x^{2B_q^\ast}(\log x)^5).
\]
If $1-B_q>5\log\log x/\log x$, then we see that
\[
x^{2B_q^\ast}(\log x)^5\le x^{1+B_q}\cdot x^{-(1-B_q)}(\log x)^5\le x^{1+B_q}
\]
so that \eqref{MainTheorem-1-asymp} follows. Thus we may assume $1-B_q\le5\log\log x/\log x$.
Further using the assumption on $q$ we find that
$q\le x^{(1-B_q)/2}(\log x)\le (\log x)^{7/2}$.
Thus, by recalling the definition of $\eta=\eta_{q}(x)$ and choosing $\varepsilon=1/7$, we obtain
\[
B_q^\ast
\le 1-\eta\le 1-\frac{c_1}{\max((\log x)^{1/2},(\log x)^{4/5})}
=1-\frac{c_1}{(\log x)^{4/5}},
\]
where $c_1=c_1(1/7)$ is an absolute constant. This gives
\[
x^{2B_q^\ast}(\log x)^5\le x^{1+B_q^\ast}(\log x)^5\exp(-c_1(\log x)^{1/5})\ll x^{1+B_q}.
\]
Therefore we always arrive at \eqref{MainTheorem-1-asymp} .

We next prove the assertion (2).
Assume that the formula \eqref{MainTheorem-1-formula} holds, i.e.,
\begin{equation}\label{assumed_error}
S(x;q,a,b)=\frac{x^2}{2\varphi(q)^2}+E_d(x),\quad
E_d(x)\ll_qx^{1+d+\varepsilon}
\end{equation}
for arbitrary $\varepsilon>0$.
Now we use this formula to obtain the meromorphic continuation of $F(s)$.
In the same manner as  in the proof of Proposition \ref{Ruppel_continuation}
we have
\[
F(s)-\frac{1}{\varphi(q)^2}\frac{1}{s-2}
=
s\int_1^{\infty}E_d(u)u^{-s-1}du+\frac{1}{2\varphi(q)^2}
\]
for $\sigma>2$.
Then, by \eqref{assumed_error},
the right-hand side gives an analytic function on $\sigma>1+d$.
Therefore under the last assertion of Proposition \ref{Ruppel_continuation},
we have $B_q\le d$ provided DZC, $B_q<1$
and that $\overline{\chi(a)}+\overline{\chi(b)}\neq0$ for any $\chi\,(\md q)$.
The supplement for $a=b$ is proved in Section~\ref{sec:exclusion}.
This now completes the proof. 
\end{proof}

We next move on to Theorem \ref{thmsiegel}.
The  strategy is the same as in the proof of Theorem \ref{MainTheorem-1}.
We consider the Dirichlet series
\[
F_1(s)
=
F_1(s;q;c)
=
\sum_{\substack{n=1\\n\equiv c\,(\md q)}}^\infty\frac{G(n)}{n^s}.
\]
The meromorphic continuation of $F_1(s)$ is obtained
via Theorem \ref{Asymptotic_1B}.

\begin{prop}\label{Ruppel_continuation_1B}
The function $F_1(s)$ can be continued meromorphically
to the half plane $\sigma>2B_q$.
Its  poles in the half plane $\sigma>2B_q$ are
\begin{enumerate}[(i)] 
\item a possible pole at $s=2$ of order at most 1 with residue $\mathfrak{S}_q(c)$,
\item a possible pole at $s=\rho_q+1$ of at most order 1 with residue
\[
r_1(\rho_q)
=
-\frac{2}{\varphi(q)^2}\frac{1}{\rho_q}
\sum_{\substack{\chi\,(\md q)\\L(\rho_q,\chi)=0}}m_\chi(\rho_q)
\sum_{\substack{a=1\\(a(c-a),q)=1}}^q\overline{\chi(a)},
\]
where $\rho_q$ is a zero of $\prod_{\chi\,(\md q)}L(s,\chi)$
with $0<\Re \rho<1$.
\end{enumerate}
\end{prop}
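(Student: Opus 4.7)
The plan is to mirror the proof of Proposition \ref{Ruppel_continuation}, substituting the explicit formula from Theorem \ref{Asymptotic_1B} in place of Theorem \ref{Asymptotic_B_SP}. First I would rewrite the asymptotic statement of Theorem \ref{Asymptotic_1B} in the cleaner form
\[
S_1(x) := \sum_{\substack{n\le x\\n\equiv c\,(\md q)}}G(n)
=\frac{\mathfrak{S}_q(c)}{2}x^2
+\sum_{\rho_q} r_1(\rho_q)\frac{x^{\rho_q+1}}{\rho_q+1}
+E_1(x),
\]
valid for $x\ge 1$ (with $S_1(x)=0$ for small $x$), where $E_1(x)\ll x^{2B_q}(\log 2qx)^5$ since $B_q^\ast\le B_q$, and the coefficient $r_1(\rho_q)$ is precisely the one in the statement. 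This requires checking that the double sum $\sum_{\chi}\overline{\chi(a)}\sum_{\rho_\chi}$ appearing in Theorem \ref{Asymptotic_1B} can be reorganized into a single sum over distinct zeros $\rho_q$ of $\prod_\chi L(s,\chi)$ with multiplicities $m_\chi(\rho_q)$; this is purely a bookkeeping step.

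Next, for $\sigma>2$ I would write the Mellin--Stieltjes representation
\[
F_1(s)=\int_1^{\infty} u^{-s}\,dS_1(u)=s\int_1^{\infty} S_1(u)u^{-s-1}\,du,
\]
valid by absolute convergence and integration by parts. Substituting the decomposition of $S_1(u)$ splits $F_1(s)$ into three contributions. The polynomial main term yields
\[
\frac{\mathfrak{S}_q(c)}{2}\cdot \frac{s}{s-2}
=\frac{\mathfrak{S}_q(c)}{s-2}+\frac{\mathfrak{S}_q(c)}{2},
\]
accounting for part (i). The zero-sum contribution, after swapping summation and integration, gives
\[
\sum_{\rho_q}\frac{r_1(\rho_q)\,s}{(\rho_q+1)(s-\rho_q-1)}
=\sum_{\rho_q}\frac{r_1(\rho_q)}{s-\rho_q-1}+\sum_{\rho_q}\frac{r_1(\rho_q)}{\rho_q+1},
\]
yielding the simple poles at $s=\rho_q+1$ with the claimed residues, and a convergent constant by Lemma \ref{Lem:zero_sum2}. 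The error integral $s\int_1^\infty E_1(u)u^{-s-1}\,du$ converges absolutely and uniformly on compacta in the half-plane $\sigma>2B_q$ by the bound on $E_1$, so it defines an analytic function there.

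The technical step is justifying the interchange of summation and integration in the zero-sum term, together with the uniform convergence of the resulting series on compact subsets of $\mathbb{C}\setminus\{\rho_q+1\}_{\rho_q}$. This is where one bounds $|r_1(\rho_q)|\ll\varphi(q)^{-2}|\rho_q|^{-1}\sum_{\chi}m_\chi(\rho_q)\cdot\varphi(q)$ (since $|\sum_a \overline{\chi(a)}|\le\varphi(q)$), and then uses Lemma \ref{Lem:zero_sum2} to see that $\sum_{\rho_q}|r_1(\rho_q)|/|\rho_q+1|$ converges; uniform convergence on compacta in the relevant region follows from the estimate $|s-\rho_q-1|\gg|\gamma_{\chi}|$ for $|\gamma_\chi|\ge 2|\Im s|$, exactly as in the parenthetical argument in the proof of Proposition \ref{Ruppel_continuation}. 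I expect this to be the only place where care is needed; the rest is a direct translation of the earlier argument, with the quantity $\mathfrak{S}_q(c)/2$ replacing $1/(2\varphi(q)^2)$ and with the pole at $s=2$ now only \emph{possible}, since $\mathfrak{S}_q(c)$ vanishes when $(2,q)\nmid c$.
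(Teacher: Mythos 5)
Your proposal is correct and takes exactly the approach the paper intends: the paper's own proof of Proposition~\ref{Ruppel_continuation_1B} is the single sentence ``This can be proven in the same manner as Proposition~\ref{Ruppel_continuation},'' and your write-up simply carries out that same Mellin--Stieltjes argument with Theorem~\ref{Asymptotic_1B} in place of Theorem~\ref{Asymptotic_B_SP}, including the same uniform-convergence justification via Lemma~\ref{Lem:zero_sum2}.
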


\begin{proof}
This can be proven in the same manner
as Proposition \ref{Ruppel_continuation}.

\end{proof}

We also require the following lemma.
\begin{lem}
\label{lem:singular_series_average}
For $x\ge2$ and positive integers $c,q$, we have
\[
\sum_{\substack{n\le x\\n\equiv c\,(\md q)}}J(n)=\frac{\mathfrak{S}_q(c)}{2}x^2+O(x\log x),
\]
where the implicit constant is absolute.
\end{lem}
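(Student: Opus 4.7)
My plan is to reduce the lemma to an explicit Euler product computation via the standard divisor expansion of the Hardy--Littlewood singular series. First, if $(2,q)\nmid c$ then every $n\equiv c\,(\md q)$ is odd, so $J(n)=0$ on the left-hand side; on the right, the factor from $p=2$ in the product defining $\mathfrak{S}_q(c)$ vanishes. Both sides are thus zero, so I may assume $(2,q)\mid c$ from now on, in which case only even $n\equiv c\,(\md q)$ contribute.

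For even $n$, the factorization $\tfrac{p-1}{p-2}=1+\tfrac{1}{p-2}$ yields the divisor identity
\[
\prod_{\substack{p\mid n\\ p>2}}\frac{p-1}{p-2}=\sum_{d\mid n}h(d),\qquad h(d):=\frac{\mu^{2}(d)\,[d\text{ odd}]}{\prod_{p\mid d}(p-2)},
\]
so that $J(n)=C_{2}n\sum_{d\mid n}h(d)$. Swapping the order of summation,
\[
\sum_{\substack{n\le x\\ n\equiv c\,(\md q)}}J(n)=C_{2}\sum_{d\text{ odd sqfree}}h(d)\sum_{\substack{n\le x\\ n\equiv c\,(\md q)\\ 2d\mid n}}n.
\]
By the Chinese remainder theorem the inner sum runs over a single residue class modulo $M:=\operatorname{lcm}(q,2d)$, is non-empty iff $\gcd(Q,d)\mid c$ (where $Q$ is the odd part of $q$), and in that case equals $x^{2}/(2M)+O(x+M)$; otherwise it vanishes.

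To evaluate the main term I would let the outer sum run to infinity. Decomposing $d=ed'$ with $e=\gcd(Q,d)\mid\gcd(Q,c)$ and $\gcd(d',Q)=1$, the resulting sum factors as an absolutely convergent Euler product; combining this with $C_{2}=2\prod_{p>2}p(p-2)/(p-1)^{2}$ and the identity $q/\varphi(q)=2^{\min(v_{2}(q),1)}\prod_{p>2,\,p\mid q}p/(p-1)$, and separating the primes $p\mid q$ according as $p\mid c$ or $p\nmid c$, one verifies---in both cases $q$ odd and $q$ even---that the product collapses to exactly $\mathfrak{S}_{q}(c)\,x^{2}/2$. This Euler-product collapse, with its careful separate handling of the prime $2$ (using $(2,q)\mid c$) and of the primes dividing $q$, is the main technical obstacle I expect.

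For the error, the uniform bound $|I(d)-x^{2}/(2M)\,[\text{compat}]|\ll x$, valid for all $2d\le x$ (using $I(d)\le x$ when $M>x$), summed against $h$ contributes $\ll x\sum_{d\le x/2}h(d)\ll x\log x$ once one observes that $\sum_{d}h(d)d^{-s}=\prod_{p>2}(1+\tfrac{1}{(p-2)p^{s}})$ has only a simple pole at $s=0$. The tail from completing the main sum to infinity is handled by $M\ge qd/Q$ together with $\sum_{d>Y}h(d)/d\ll(\log Y)/Y$ (by partial summation), giving a further contribution of $O((Q/q)\,x\log x)\ll x\log x$, uniformly in $q$, as required.
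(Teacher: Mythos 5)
Your proof follows essentially the same route as the paper: both reduce to the case $(2,q)\mid c$, expand $J(n)$ as a divisor sum over odd squarefree $d$, swap summation, analyze the congruence system via the Chinese Remainder Theorem to determine when the inner progression is non-empty, and collapse the resulting infinite sum into an Euler product that yields $\mathfrak{S}_q(c)/2$, with the error and tail bounded by Mertens-type estimates. The minor differences (your $M=\operatorname{lcm}(q,2d)$ versus the paper's $q_1=q/(2d,q)$, your $d=ed'$ factorization versus the paper's direct Euler-product identification, and your appeal to a Dirichlet-series pole where the paper bounds the partial sums $\sum_{d\le Y}\mu^2(d)/\varphi_2(d)$ directly by the Euler product) are cosmetic and do not constitute a different argument.
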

\begin{proof}
We first consider the case of $(2,q)\nmid c$, i.e. $q$ is even while $c$ is odd.
Since there is no even number $n\equiv c\ (\md q)$
the sum on the left-hand side is $=0$ since $J(n)=0$ for odd $n$.
Also, $\mathfrak{S}_{q}(c)=0$ by definition in this case.
Thus the assertion trivially holds for $(2,q)\nmid c$.
We next consider the case $(2,q)\mid c$.
We use an expression
\[
J(2N)=2C_2N\sum_{\substack{d|N\\d:\,\text{odd}}}\frac{\mu(d)^2}{\varphi_2(d)},\quad
\varphi_2(n)=\prod_{p\mid n}(p-2),
\]
to obtain 
\begin{equation}
\label{singular_series_average:swapped}
\sum_{\substack{n\le x\\n\equiv c\,(\md q)}}J(n)
=
\sum_{\substack{2N\le x\\2N\equiv c\,(\md q)}}J(2N)
=
2C_2\sum_{\substack{d\le x\\d:\,\text{odd}}}\frac{\mu(d)^2d}{\varphi_2(d)}
\sum_{\substack{2dn\le x\\2dn\equiv c\,(\md q)}}n.
\end{equation}
Let $q_1=q/(2d,q)$. The congruence
\[
2dn\equiv c\ (\md q)
\]
has a solution, say $c_1\ (\md q_1)$ if $(2d,q)\mid c$, and no solution if $(2d,q)\nmid c$.
Moreover, the condition $(2d,q)\mid c$ is equivalent to $(d,q)\mid c$
since $d$ is odd and $(2,q)\mid c$.
Hence from \eqref{singular_series_average:swapped}, we have
\begin{equation}
\label{singular_series_average:inner_sum_calculated}
\begin{aligned}
\sum_{\substack{n\le x\\n\equiv c\,(\md q)}}J(n)
&=
2C_2\sum_{\substack{d\le x/2\\d:\,\text{odd}\\[.3mm](d,q)\mid c}}\frac{\mu(d)^2d}{\varphi_2(d)}
\sum_{\substack{n\le x/2d\\n\equiv c_1\,(\md q_1)}}n\\
&=
\frac{C_2x^2}{4q}\sum_{\substack{d\le x/2\\d:\,\text{odd}\\[.3mm](d,q)\mid c}}\frac{\mu(d)^2(2d,q)}{d\varphi_2(d)}
+
O\left(x\sum_{\substack{d\le x/2\\d:\,\text{odd}}}\frac{\mu(d)^2}{\varphi_2(d)}\right).
\end{aligned}
\end{equation}
As for the second term on the right-hand side of \eqref{singular_series_average:inner_sum_calculated},
we have
\begin{equation}
\label{singular_series_average:phi2_average}
\sum_{\substack{d\le x/2\\d:\,\text{odd}}}\frac{\mu(d)^2}{\varphi_2(d)}
\le
\prod_{2<p\le x}\left(1+\frac{1}{p-2}\right)
\ll\log x.
\end{equation}
 For the first term on the right-hand side of \eqref{singular_series_average:inner_sum_calculated},
we have
\begin{equation}
\label{singular_series_average:main_term}
\frac{C_2}{4q}
\sum_{\substack{d\le x/2\\d:\,\text{odd}\\[.3mm](d,q)\mid c}}\frac{\mu(d)^2(2d,q)}{d\varphi_2(d)}
=
\frac{C_2\cdot(2,q)}{4q}
\sum_{\substack{d:\,\text{odd}\\[.3mm](d,q)\mid c}}\frac{\mu(d)^2(d,q)}{d\varphi_2(d)}
+
O\left(
\sum_{\substack{d>x/2\\d:\,\text{odd}}}\frac{\mu(d)^2}{d\varphi_2(d)}
\right).
\end{equation}
This remainder term is estimated by using \eqref{singular_series_average:phi2_average} as
\begin{equation}
\label{singular_series_average:main_tail}
\sum_{\substack{d>x/2\\d:\,\text{odd}}}\frac{\mu(d)^2}{d\varphi_2(d)}
=
\sum_{\substack{d>x/2\\d:\,\text{odd}}}\frac{\mu(d)^2}{\varphi_2(d)}\int_{d}^{\infty}\frac{du}{u^2}
\le
\int_{x/2}^\infty\left(\sum_{\substack{d\le u\\d:\,\text{odd}}}\frac{\mu(d)^2}{\varphi_2(d)}\right)\frac{du}{u^2}
\ll
\frac{\log x}{x}.
\end{equation}
On the other hand we have
\begin{align*}
\frac{C_2\cdot(2,q)}{4q}
\sum_{\substack{d:\,\text{odd}\\[.3mm](d,q)\mid c}}\frac{\mu(d)^2(d,q)}{d\varphi_2(d)}
=
\frac{C_2\cdot(2,q)}{4q}
\prod_{\substack{p>2\\p\mid (q,c)}}\frac{p-1}{p-2}
\prod_{\substack{p>2\\p\nmid q}}\frac{(p-1)^2}{p(p-2)}.
\end{align*}
Since the definition of $C_2$ is
$$
C_2=2\prod_{p>2}\left(1-\frac{1}{(p-1)^2}\right)=2\prod_{p>2}\frac{p(p-2)}{(p-1)^2},
$$
the right-hand side of the above is equal to
\begin{align}
\label{singular_series_average:local_singular_series}
=&\ 
\frac{(2,q)}{2q}
\prod_{\substack{p>2\\p\mid (q,c)}}\frac{p}{p-1}
\prod_{\substack{p>2\\p\mid q\\p\nmid c}}\frac{p(p-2)}{(p-1)^2}
=\frac{\mathfrak{S}_{q}(c)}{2}
\end{align}
since $(2,q)\mid c$.
Substituting \eqref{singular_series_average:main_tail}
and \eqref{singular_series_average:local_singular_series}
into \eqref{singular_series_average:main_term},
we have
\[
\frac{C_2}{4q}
\sum_{\substack{d\le x\\d:\,\text{odd}\\[.3mm](d,q)\mid c}}\frac{\mu(d)^2(2d,q)}{d\varphi_2(d)}
=
\frac{\mathfrak{S}_{q}(c)}{2}+O\left(\frac{\log x}{x}\right).
\]
Combining this with \eqref{singular_series_average:phi2_average}
and \eqref{singular_series_average:inner_sum_calculated}, we obtain the lemma.
\end{proof}

We finally prove Theorem \ref{thmsiegel}.
\begin{proof}[Proof of Theorem \ref{thmsiegel}]
We first prove (1).
By using Lemma \ref{character_sum},
the second term on the right-hand side of Theorem \ref{Asymptotic_1B} is
\begin{align*}
&=
\frac{2}{\varphi(q)^2}
\sum_{\chi\,(\md q)}
\sum_{\rho_\chi}\frac{x^{\rho_\chi+1}}{\rho_\chi(\rho_\chi+1)}
\left(\sum_{\substack{a=1\\(a(c-a),q)=1}}^q\overline{\chi(a)}\right)\\
&\ll
\frac{x^{B_q+1}}{\varphi(q)}
\sum_{\chi\,(\md q)}\frac{1}{\varphi(q^\ast)}
\sum_{\rho_\chi}\frac{1}{|\rho_\chi(\rho_\chi+1)|}.
\end{align*}
By Lemma \ref{Lem:zero_sum2}, this can be estimated as
\begin{align*}
\ll
x^{1+B_q}
\frac{q^{2/3}}{\varphi(q)}
\sum_{\chi\,(\md q)}\frac{1}{\varphi(q^\ast)}
&\ll
x^{1+B_q}
\frac{q^{2/3}}{\varphi(q)}
\sum_{q^{\ast}|q}\frac{1}{\varphi(q^{\ast})}\sideset{}{^\ast}\sum_{\chi\,(\md q^{\ast})}1\\
&\le
x^{1+B_q}
\frac{q^{2/3}\tau(q)}{\varphi(q)}
\ll
x^{1+B_q}
\end{align*}
where the summation symbol with $\ast$ denotes the sum over primitive characters
and $\tau(q)$ denotes the number of divisors of $q$.
Thus Theorem \ref{Asymptotic_1B} gives
\[
\sum_{\substack{n\le x\\n\equiv c\,(\md q)}}G(n)
=
\frac{\mathfrak{S}_q(c)}{2}x^2
+O(x^{1+B_q}+x^{2B_q^\ast}(\log qx)^5).
\]
Then an argument similar to that for (1) of Theorem \ref{MainTheorem-1}
gives \eqref{thmsiegel_asymp}.

We next prove (2).
Assume that formula \eqref{eq:thmsiegel} holds.
Then by Lemma \ref{lem:singular_series_average},
\begin{equation}\label{thmsiegel:assumed_error}
\sum_{\substack{n\le x\\ n\equiv c\,(\md q)}}G(n)
=\frac{\mathfrak{S}_q(c)}{2}x^2+E_d(x),\quad
E_d(x)\ll x^{1+d+\varepsilon}
\end{equation}
for arbitrary $\varepsilon>0$.
As in the case of $F(s)$ we can obtain the meromorphic continuation of $F_1(s)$
to the half plane $\sigma>1+d$, which has only one possible pole at $s=2$.
We compare this analytic continuation with Proposition \ref{Ruppel_continuation_1B}.
By assumption (a) we have $B_q=\Re\rho_0<1$ so that $2B_q<1+\Re\rho_0$.
Thus by using assumption (b) $F_1(s)$ has a possible pole of order $\le1$ with residue
\[
-\frac{2}{\varphi(q)^2}\frac{m}{\rho_0}\sum_{\substack{a=1\\(a(c-a),q)=1}}^{q}\overline{\chi(a)},
\]
where $m\ge1$. By Lemma \ref{character_sum},
we find that this residue is non-zero provided under the assumption (c):\ 
$q^\ast$ is squarefree, $(c,q^\ast)=1$, 
and yet another assumption
\[2\nmid q\quad\text{or}\quad2|q^{\ast}c,\]
the last of which is assured by the condition $(2,q)\mid c$ of Theorem \ref{thmsiegel}.
Therefore $\rho_0+1$ is a pole of $F_1(s)$.
By comparing the position of this pole and the analytic continuation
we have $1+B_q\le1+d$. This completes the proof.
\end{proof}

\section{Exclusion of  $B_{q}=1$ for $a=b$}\label{sec:exclusion}

In this last section, we exclude the possibility of $B_{q}=1$ for
$a=b$ in Theorem~\ref{MainTheorem-1} (2)
following an idea of Ruzsa.

\medskip
 Let $G_{a,q}(n)=G(n;q,a,a)$, so that
 $S(x;q,a,a)= \sum_{n\leq x} G_{a,q}(n)$.

Then our assumption \eqref{MainTheorem-1-formula} 
in Theorem~\ref{MainTheorem-1} (2) reads
\begin{equation}
  \label{eq:mainassump}
 S(x;q,a,a)=\frac{x^{2}}{2\ph(q)^{2}}+O_{q}(x^{1+d+\veps})  
\end{equation}
for some $1/2\leq d<1$ and any $\veps>0$.
We prove that \eqref{eq:mainassump} together with DZC
implies that $B_{q}<1$.

\begin{proof}
  
\textit{Step 1.} For $|z|<1$ let 
\[
F_{a,q}(z)=\sum_{\substack{n\geq 1\\n\equiv a \;(q)}}
\Lambda (n) z^{n},
\text{ so } F_{a,q}^{2}(z)=\sum_{n\geq 1} G_{a,q}(n)z^{n}.
\]
Then, since $(1-z)^{-1}=1+z+z^2+\cdots$, we obtain an identity
\[
\frac{1}{1-z}F_{a,q}^{2}(z)
=
\sum_{n\ge1}\left(\sum_{\substack{i+j=n\\i\ge1,\,j\ge0}}G_{a,q}(i)\right)z^n
=
\sum_{n\ge1}S(n;q,a,a)z^n.
\]
From \eqref{eq:mainassump} we deduce that
\begin{equation}
\label{eq:tGsum}
\frac{1}{1-z}F_{a,q}^{2}(z)
=
\frac{1}{2\ph(q)^2}\sum_{n\geq 1}n^2z^{n}
+
O_{q}\left(\sum_{n\geq1}n^{1+d+\veps}|z|^{n}\right).
\end{equation}
Using the derivative of the geometric series twice we find that
\[
\frac{2}{(1-z)^{3}}=\sum_{n\geq 1}n^{2}z^{n}+O\left(\sum_{n=1}^{\infty}n|z|^n\right)
\]
for $|z|<1$ so we can evaluate the main term in \eqref{eq:tGsum}.
The above error terms are estimated with the help of the following Lemma.
\begin{lem}
\label{lem:C}
For a sequence of positive real numbers $(a_n)_{n=0}^{\infty}$ satisfying
\begin{equation}
\label{EQ:A_cond_Abel}
A(x):=\sum_{n\le x}a_n\le Cx^\kappa
\end{equation}
for all $x\ge0$ with some constants $C,\kappa\ge0$, we have
\begin{equation}
\label{eq:dpleps}
\sum_{n\ge0}a_ne^{-n/X}\le C\Gamma(\kappa+1)X^\kappa.
\end{equation}
for any real number $X\ge1$.
\end{lem}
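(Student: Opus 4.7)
The plan is to prove this by Abel summation (partial summation) applied to the series $\sum_{n\ge0} a_n e^{-n/X}$, converting it into an integral involving $A(u)$ and the derivative of $e^{-u/X}$, and then bounding $A(u)$ by the hypothesis \eqref{EQ:A_cond_Abel} to recognize a gamma integral.

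More precisely, I would proceed as follows. First, for any $N\ge0$, write
\[
\sum_{n=0}^{N} a_n e^{-n/X}
= A(N)e^{-N/X} + \frac{1}{X}\int_0^N A(u)\,e^{-u/X}\,du,
\]
which is the standard partial summation with $f(u)=e^{-u/X}$. Second, let $N\to\infty$: by the hypothesis $A(N)\le CN^\kappa$, the boundary term $A(N)e^{-N/X}\to 0$, and the monotone convergence theorem justifies passing to the limit inside the integral. This gives
\[
\sum_{n\ge0} a_n e^{-n/X}
= \frac{1}{X}\int_0^\infty A(u)\,e^{-u/X}\,du
\le \frac{C}{X}\int_0^\infty u^\kappa\,e^{-u/X}\,du.
\]
Finally, the substitution $v=u/X$ turns the last integral into $X^{\kappa+1}\int_0^\infty v^\kappa e^{-v}\,dv = X^{\kappa+1}\Gamma(\kappa+1)$, so the bound \eqref{eq:dpleps} follows at once.

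There is no real obstacle here; the only mild points requiring attention are the vanishing of the boundary term at infinity (immediate from $A(N)\le CN^\kappa$ and the superpolynomial decay of $e^{-N/X}$) and the interchange of limit and integration (covered by monotone convergence since $a_n\ge0$, hence $A$ is non-decreasing and the integrand is non-negative). The assumption $X\ge1$ is not actually used in the computation itself; it only serves the context of the application in \eqref{eq:tGsum}.
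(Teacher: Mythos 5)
Your proof is correct and follows essentially the same route as the paper's: partial summation to express the series as $\frac{1}{X}\int_0^\infty A(u)e^{-u/X}\,du$, then bounding $A(u)\le Cu^\kappa$ and evaluating the resulting gamma integral. You are a bit more explicit than the paper about the vanishing of the boundary term $A(N)e^{-N/X}$ and the justification for the limit, but these are the same argument.
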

\begin{proof}
By partial summation 
and \eqref{EQ:A_cond_Abel}, 
the above series is expressed as
\[
\sum_{n\ge0}a_ne^{-n/X}
=
\frac{1}{X}\int_{0}^{\infty}e^{-u/X}A(u)du.
\]
Also by
\eqref{EQ:A_cond_Abel}, we estimate this integral by
\[
\le
\frac{C}{X}\int_{0}^{\infty}e^{-u/X}u^{\kappa}du
=
CX^{\kappa}\int_{0}^{\infty}e^{-u}u^{\kappa}du
=
C\Gamma(\kappa+1)X^{\kappa}.
\]
Thus the lemma follows.
\end{proof} 

In what follows we work on the circle $|z|=R$ with $R=e^{-1/N}$ for a large positive integer $N$.
Since $\sum_{n\le x}n^{1+d+\epsilon}\ll x^{2+d+\epsilon}$,
by Lemma \ref{lem:C}, we continue \eqref{eq:tGsum} by
\[
\frac{1}{1-z}F_{a,q}^{2}(z)
=
\frac{1}{\ph(q)^2(1-z)^{3}}
+
O_{q}\left(N^C\right),\quad
C:=2+d+\veps<3
\]
on the circle $|z|=R$. Therefore, we obtain
\[
F_{a,q}(z)^{2}=\frac{1}{(1-z)^{2}\ph(q)^{2}}+O_{q}\left(|1-z|N^{C}\right).
\]
Note that the second term on the right-hand side is smaller than the first term if
\begin{equation}
\label{eq:major}
|1-z|\leq cN^{-C/3}
\end{equation}
with sufficiently small constant $c>0$ depending only on $q$ and $d$.
Thus on the arc $|z|=R$ with \eqref{eq:major}, which we may call a major arc on $|z|=R$,
we can take the complex square root of the formula for
$F_{a,q}(z)$ which yields
\begin{equation}
\label{eq:squareroot}
F_{a,q}(z)
=
\plm \frac{1}{(1-z)\ph(q)}+O_{q}(|1-z|^{2}N^{C})
\end{equation}
as an asymptotic formula for all $z$ on the major arc.
Here, the same sign $\pm$ is kept on the whole major arc
since $F_{a,q}(z)$ is continuous.
Because there are only non-negative coefficients,
the left-hand side in \eqref{eq:squareroot} 
is non-negative 
for the choice $z=e^{-1/N}$.
With this choice, the main term in \eqref{eq:squareroot} is
real and therefore must also be non-negative.
Therefore, the sign $\pm$ on the right-hand side of \eqref{eq:squareroot}
is $+$.

At this point we notice that we are  unable to obtain a similar asymptotic formula
when $a\neq b$. The square root step here
shows that using this method we can prove the exclusion of $B_{q}=1$ in
Theorem~\ref{MainTheorem-1} (2) only when $a=b$.

\textit{Step 2.}
Now we use the kernel
\begin{equation}
\label{EQ:K_kernel}
K(z)=z^{-N-1}+z^{-N}+\dots+z^{-2}=z^{-N-1}\frac{1-z^{N}}{1-z}.
\end{equation}
Then by using Cauchy's integral formula, we obtain
\[
\psi(N;q,a)
=
\frac{1}{2\pi i} \int_{|z|=R} F_{a,q}(z) K(z)dz,\quad
N
=
\int_{|z|=R}\frac{1}{1-z}K(z)dz.
\]
Thus we deduce that
\begin{equation}
\label{eq:psiaq}
\psi(N;q,a)
=
\frac{N}{\ph(q)}+\frac{1}{2\pi i}\int_{|z|=R}\left(F_{a,q}(z)-\frac{1}{(1-z)\ph(q)}\right)K(z)dz.
\end{equation}
From the second expression of $K(z)$ in \eqref{EQ:K_kernel},
we see that $K(z)\ll |1-z|^{-1}$.
Therefore, on the major arc \eqref{eq:major} of length $O(N^{-C/3})$ 
we insert the asymptotic formula
\eqref{eq:squareroot} and
the contribution to this integral is $O(N^{C-2C/3})=O(N^{C/3})$ with $C/3<1$.

\textit{Step 3.} For the rest of the circle the minor arc
 where $|1-z|>cN^{-C/3}$ 
we proceed with the Cauchy--Schwarz inequality
and apply the Parseval identity.

By the Parseval identity, we obtain the estimate over the full arc
\begin{equation}
\label{eq:fullc}
\begin{aligned}
\int_{|z|=R}\left|F_{a,q}(z)-\frac{1}{(1-z)\ph(q)}\right|^{2}dz
&\ll
\sum_{n\geq 0}(\Lambda(n)+1)^2e^{-2n/N}
\ll
N^{1+\veps}, 
\end{aligned}
\end{equation}
where we used the estimate
\[
\sum_{n\le x}(\Lambda(n)+1)^2\ll x\log x\ll x^{1+\veps}
\]
and Lemma~\ref{lem:C}.

On the other hand,
we use the decay of the kernel $K(z)$ on the minor arc.
By using the estimate $K(z)\ll|1-z|^{-1}$,
we start with
\[
I
=
\int_{\substack{|z|=R \\ |1-z|>cN^{-C/3}}} |K(z)|^{2}dz
\ll
\int_{\substack{|z|=R \\ |1-z|>cN^{-C/3}}}  \frac{dz}{|1-z|^{2}}.
\]
Now we use the parametrization $z=Re^{i\alpha}=e^{-1/N+i\alpha}$  with $-\pi\leq\alpha\leq\pi$.
On the minor arc, we have
\[
N^{-C/3} \ll |1-e^{-1/N+i\alpha}|\ll\left|-\frac{1}{N}+i\alpha\right|\ll\frac{1}{N}+|\alpha|
\]
so, by recalling the fact that $C<3$, 
we have
$|\alpha|\ge c_1N^{-C/3}$ with some small $c_1>0$ depending only on $q$ and $d$.
Also, note that
\begin{equation}
\label{EQ:explicit_modulus}
|1-z|^2=|1-e^{-1/N}\cos\alpha+ie^{-1/N}\sin\alpha|^2=1+e^{-2/N}-2e^{-1/N}\cos\alpha.
\end{equation}
By using the inequality of the arithmetic and geometric mean
\[
2\left|e^{-1/N}\cos\alpha\right|\le e^{-2/N}+(\cos\alpha)^2,
\]
we find that
\[
|1-z|^2\ge1-(\cos\alpha)^2=(\sin\alpha)^2.
\]
If $|\alpha|\ge\pi/2$, then $\cos\alpha<0$ so \eqref{EQ:explicit_modulus} implies $|1-z|^2\ge1$.
This yields
\begin{align*}
I
\ll 
\int_{c_1N^{-C/3}<|\alpha|\le\pi}\frac{d\alpha}{|1-Re^{i\alpha}|^{2}}
&\ll
\int_{c_1N^{-C/3}<|\alpha|\le\pi/2}\frac{d\alpha}{(\sin\alpha)^2}
+
2\int_{\pi/2<|\alpha|\le\pi}d\alpha\\
&\ll
\int_{c_1N^{-C/3}<|\alpha|\le\pi/2}\frac{d\alpha}{\alpha^2}+1
\ll
N^{C/3}.
\end{align*}

Putting everything together the Cauchy--Schwarz inequality gives
the minor arc estimate for the integral in \eqref{eq:psiaq} as
\[
\ll
\left( \int_{|z|=R}\left|F_{a,q}(z)-\frac{1}{(1-z)\ph(q)}\right|^{2}dz\right)^{1/2} I^{1/2} 
\ll
N^{1/2+C/6+\veps}
\]
with $1/2+C/6<1$. This together with the major arc estimate allows us to conclude that
\[
\psi(N;q,a)-\frac{N}{\ph(q)} \ll N^{\veps}(N^{C/3} + N^{1/2+C/6}).
\]
The exponent of $N$ is $<1$ for small $\veps>0$. In the explicit formula for
\[
\psi_{a,q}(N)=\frac{1}{\ph(q)}\sum_{\chi(q)}\bar{\chi}(a)\psi(N,\chi)
\]
that we obtain by inserting the explicit formula 
for $\psi(N,\chi)$ from
Lemma~\ref{Lem:explicit}
with $T=N$
and assuming DZC, no two terms $N^{\rho_{\chi}}/\rho_{\chi}$
will cancel out for different characters.
We conclude that $B_{q}<1$.
\end{proof}


%
%

\end{document}